\numberwithin{equation}{section}  
\newcommand{\field}[1]{\mathbb{#1}}
\newcommand{\Z}{\field{Z}}
\newcommand{\R}{\field{R}}
\newcommand{\C}{\field{C}} 
\newcommand{\N}{\field{N}}
 \def\cC{\mathscr{C}}
\def\cL{\mathscr{L}}
\def\mL{\mathcal{L}}
\newcommand{\boldsym}[1]{\boldsymbol{#1}}
\newcommand\bn{\boldsym{n}}
\def\Im{{\rm Im}}
\newcommand{\til}[1]{\widetilde{#1}}
\newcommand{\cali}[1]{\mathscr{#1}}
\newcommand{\cH}{\cali{H}}
\DeclareMathOperator{\End}{End}
\DeclareMathOperator{\Ker}{Ker}
\DeclareMathOperator{\Dom}{Dom}
\DeclareMathOperator{\rank}{rank}
\DeclareMathOperator{\supp}{supp}
\DeclareMathOperator{\vol}{vol}
\newcommand{\imat}{\sqrt{-1}}
\newcommand{\om}{\omega}
\newcommand{\ol}{\overline}
\newcommand{\ddbar}{\overline\partial}
\newcommand{\dbar}{\partial}
\newtheorem{thm}{Theorem}[section]
\newtheorem{lemma}[thm]{Lemma}
\newtheorem{prop}[thm]{Proposition}
\newtheorem{cor}[thm]{Corollary}
\theoremstyle{definition}
\newtheorem{rem}[thm]{Remark}
\theoremstyle{definition}
\newtheorem{defn}[thm]{Definition}
\newtheorem{exam}[thm]{Example}
\newcommand{\be}{\begin{eqnarray}}
\newcommand{\ee}{\end{eqnarray}}
\newcommand{\ov}{\overline}
\newcommand{\comment}[1]{}
\begin{document}       
\title 
{Cohomology dimension growth for Nakano q-semipositive line bundles}     
           
\author{Huan Wang}  
\address{Institute of Mathematics, Academia Sinica, Taiwan}
\email{huanwang@gate.sinica.edu.tw, huanwang2016@hotmail.com} 
\keywords{positivity, semi-positivity, cohomology, fundamental estimates, q-convex manifolds, covering manifolds, pseudo-convex domains, weakly 1-complete manifolds, complete manifolds, Bergman kernel, Dirac operator}   
\date{25. September. 2019}  
\maketitle        
  \begin{abstract}    
  	We study the cohomology with high tensor powers of Nakano $q$-semipositive line bundles on complex manifolds. We obtain the asymptotic estimates for the dimension of cohomology with high tensor powers of semipositive line bundles over $q$-convex manifolds and various possibly non-compact complex manifolds, in which the order of estimates are optimal. Besides, estimates for the modified Dirac operator on Nakano $q$-positive line bundle on almost complex manifolds are given.   
  \end{abstract} 
           
\section{Introduction} 
Let $X$ be a complex manifold of dimension $n$ and $(E,h^E)$ be a holomorphic Hermitian vector bundle over $X$. Let $\nabla^E$ be the holomorphic Hermitian connection of $(E,h^E)$ and $R^{(E,h^E)}=(\nabla
^E)^2$ be the curvature of $\nabla^E$. The Bochner-Kodaira-Nakano formula and its variation with boundary term, \cite{Dem:86,AV:65,Griffs:66,MM}, play the central role in various vanishing theorems on complex manifolds. The latter have important applications in complex differential and algebraic geometry, such as the characterization of projective manifolds \cite{Kod:54}, Moishezon manifolds \cite{Sil:84,Siu:85,Dem:85} and more recently the criterion for uniruledness and rationally connectedness and related results \cite{BBDP:13,CDP:15,Yang:17}. The key ingredient in these formulas is the curvature term
$[\sqrt{-1}R^{(E,h^E)}, \Lambda]$, 
where $\Lambda$ is the dual of Hermitian metric on manifolds. With appropriate assumptions on the positivity of $R^E$, one can achieve the curvature term is strictly-positive, i.e., the pointwise Hermitian product
$\left\langle[\sqrt{-1}R^{(E,h^E)}, \Lambda]s,s\right\rangle_h>0$ for forms $s$ with values in $E$,
which is enough to prove vanishing theorems in various situations, see \cite{Kob:87,Dem}.

Instead of the strict positivity, we consider the $q$-semipositivity, 
which was introduced in \cite{Siu:82} over K\"{a}hler manifolds. 
A holomorphic Hermitian line bundle on K\"{a}hler manifolds is called Nakano $q$-positive (resp.\ semipositive) means that at every point the sum of
any set of $q$ eigenvalues of the curvature form is positive 
(resp.\ nonnegative) when the eigenvalues are computed with respect to
the K\"{a}hler metric. Another definition of the  $q$-positivity is the Griffiths $q$-positive (resp.\
semipositive), which means that at every point the curvature form has at least $n-q+1$ positive (resp.\ semipositive) eigenvalues, see \cite[Chapter\,3, Section\,1, Definition 1.1]{Oh:82}, \cite{Siu:82} and \cite{M:92}. 
More precisely, a holomorphic Hermitian line  bundle $(L,h^L)$ over a Hermitian manifold $(X,\omega)$ is Nakano $q$-semipositive with respect to the Hermitian metric $\omega$ of $X$, if for any $(n,q)$-forms $s$,
$\left\langle[\sqrt{-1}R^{(L,h^L)},\Lambda]s,s\right\rangle_h \geq 0$, see Definition \ref{def_qsemip}, (\ref{hypo_semi}), (\ref{hypo_pos}) and \cite{Ohs:05}.
 In this setting, the vanishing of harmonic forms does not hold in general, 
 however, the dimension of harmonic spaces with values in high tensor power 
 of such line bundles still can be estimated, and moreover the estimate turns out 
 to be optimal, see \cite{BB:02}. The solution of Grauert-Riemanschneider conjecture 
 \cite{Sil:84,Siu:85,Dem:85} shows that if $R^{(L,h^L)}\geq 0$ 
 (i.e., Nakano $1$-semipositive) on a compact complex manifold $X$ then
 $\dim H^q(X,L^k)=o(k^n)$ as $k\rightarrow \infty$ for all $ q\geq 1$. 
Demaily's solution involves holomorphic Morse inequalities \cite{Dem:85}: $\dim H^q(X,L^k\otimes E)\leq \rank(E)\frac{k^n}{n!}\int_{X(q)}(-1)^q(\frac{\sqrt{-1}}{2\pi}R^{(L,h^L)})^n+o(k^n)$ as $k\rightarrow \infty$, where $E$ is an arbitray holomorphic vector bundle and $X(q)$ is the set where $\sqrt{-1}R^{(L,h^L)}$ has exactly $q$ negative eigenvalues and $n-q$ positive eigenvalues. 
We refer to \cite{MM} for a comprehensive account of Demaily's 
holomorphic Morse inequalites and Bergman kernel asymptotics. 
   
 Let now $E$ be an arbitrary holomorphic line bundle over $X$.
 Along the same lines, Berndtsson \cite{BB:02} showed that if $R^{(L,h^L)}\geq 0$ then $\dim H^q(X,L^k\otimes E)=O(k^{n-q})$ and it improves the estimate of Siu and Demailly, which gives only $\dim H^q(X,L^k\otimes E)=o(k^n)$ as $k\rightarrow\infty$ (since $X(q)$ is the empty set for a semipositive line bundle). The  magnitude $k^{n-q}$ is optimal. By adapting their methods to general (possibly non-compact) complex manifolds with $L^2$-cohomolgy \cite{Wh:16}, we obtain a local estimate of Bergman density function on compact subsets of the underling manifolds when $R^L\geq 0$. As applications, the estimates of the Berndtsson type still hold on covering manifolds, i.e., $\dim_\Gamma \ov H_{(2)}^{0,q}(X,L^k\otimes E)=O(k^{n-q})$ for all $ q\geq 1$, and
 $1$-convex manifolds, i.e., $\dim H^q(X,L^k\otimes E)=O(k^{n-q})$ for all $ q\geq 1$, see \cite{Wh:16, Wh:17}. With additional assumptions on the positivity of $(L,h^L)$, the same estimates hold on pseudoconvex domains, weakly $1$-complete manifolds and complete manifolds, see \cite{Wh:17}. Note that, on projective manifolds, the estimate of $O(k^{n-q})$ type for nef line bundles can be found in \cite{Dem:12}, and the case of pseudo-effective line bundles was obtained in \cite{Mats:14}. On an arbitrary compact manifolds, such estimates for semipositive line bundles equipped with Hermitian metric with analytic singularities were established by \cite{Wh:17,WZ:19} (in the latter paper a vector bundle $E$ of arbitrary rank is considered).
    
  In this paper, in order to generalize such estimates to $q$-convex manifolds,
   we use the notion of Nakano $q$-semipositivity from \cite{Siu:82,Ohs:05}, which includes the usual semipositivity $R^L\geq 0$ as a special case. We remark that, inspired by \cite{BB:02,MM}, this paper together with \cite{Wh:16,Wh:17} give a unified approach to the optimal estimate of the dimension of cohomology of high tensor powers of line bundles with semipositivity on (compact and non-compact) complex manifolds.
  
\begin{defn}[\cite{Siu:82}]\label{def_qsemip}  
	Let $X$ be a complex manifold of $\dim X=n$ and $\omega$ a Hermitian metric on $X$. Let $(L,h^L)$ be a holomorphic Hermitian line bundle over $X$. Let $1\leq q\leq n$.
	\begin{itemize}
		\item[(A)] $(L,h^L)$ is called \textbf{Nakano $q$-positive} (resp.\ semipositive, negative, seminegative) with respect to $\omega$ at $x\in X$, if the sum of
		any set of $q$ eigenvalues of the curvature form $R_x^{L}$ is positive (resp.\ nonnegative,
		negative, nonpositive) when the eigenvalues are computed with respect to
		the Hermitian metric $\omega$.
		\item[(B)] 
		$(L,h^L)$ is called \textbf{Griffiths $q$-positive} (resp.\ semipositive, negative, seminegative) at $x\in X$, if the curvature form $R_x^{L}$ has at least $n-q+1$ 
		positive (resp.\ semipositive, negative, seminegative) eigenvalues.
	\end{itemize}
\end{defn}

For the relation of the notions of Griffiths and Nakano $q$-positivity, see Remark \ref{D:q-pos-M}. The basic example of Nakano $q$-positivity is the dual of canonical bundle $K^*_X$ on a compact K\"{a}hler manifold $X$ of $\dim X=n$. With respect to a K\"{a}hler metric $\omega$,  the Ricci curvature of $X$ is positive (resp.\ nonnegative) if and only if $K_X^*$ is Nakano $1$-positive (resp.\ $1$-semipositive); the scalar curvature of $X$ is positive (resp.\ nonnegative) if and only if $K_X^*$ is Nakano $n$-positive (resp.\ $n$-semipositive).
The basic example of Griffiths $q$-positivity is the dual of tautological line bundle $L(E^*)^*$, which is Griffiths ($n+1$)-positive on the projective bundle $P(E^*)$ of a holomorphic Hermitian vector bundle $(E,h^E)$ over a compact complex manifold $X$ of $\dim X=n$.

Firstly, we provide a refined local estimate on Bergman density functions for Nakano $q$-semipositive line bundles, which generalizes the main result in \cite{BB:02,Wh:16} and \cite[Theorem 3.1]{Wh:17}. The advantage is that, it enables us to study the harmonic spaces of tensor powers of line bundles with weaker semipositivity on complex manifolds. 

\begin{thm}\label{thm_local}
	Let $(X,\omega)$ be a Hermitian manifold of dimension $n$ and let
	$(L,h^L)$ and $(E,h^E)$ be holomorphic Hermitian line bundles over $X$. Let $1\leq q\leq n$. Let $K\subset X$ be a compact subset and $(L,h^L)$ be Nakano $q$-semipositive with respect to $\omega$ on a neighborhood of $K$. Then there exists  $C>0$ depending on $K$, $\omega$, $(L,h^L)$ and $(E,h^E)$, such that
	\begin{equation}
	B^j_k(x) \leq Ck^{n-j}\quad \mbox{for all}~ x\in K, k\geq 1, q\leq j\leq n,
	\end{equation}
	where $B^j_k(x)$ is the Bergman density function \eqref{e:Bergfcn} of harmonic 
	$(0,j)$-forms with values in $L^k\otimes E$. In particular, if $(L,h^L)$ is semipositive on a neighborhood of $K$, the estimate holds on $K$ for all $k\geq 1$ and $1\leq j\leq n$.
\end{thm}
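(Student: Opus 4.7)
The plan is to adapt Berndtsson's local Bergman-kernel argument \cite{BB:02}, as used in \cite{Wh:16,Wh:17} for the semipositive case, to the Nakano $q$-semipositive setting. The decisive observation is that, although $\sqrt{-1}R^L \ge 0$ may fail pointwise, the curvature operator $[\sqrt{-1}R^L,\Lambda]$ is still non-negative on $(n,j)$-forms whenever $j \ge q$: if every sum of $q$ eigenvalues of $\sqrt{-1}R^L$ is non-negative, the identity $\binom{j-1}{q-1}\sum_{i\in J}\mu_i = \sum_{S\subset J,\,|S|=q}\sum_{i\in S}\mu_i$ forces every sum of $j \ge q$ eigenvalues to be non-negative as well. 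I would first use the tautological identification $\Omega^{0,j}(L^k\otimes E) \cong \Omega^{n,j}(L^k\otimes E\otimes K_X^*)$ and work exclusively on $(n,j)$-forms, where the Bochner--Kodaira--Nakano (BKN) formula takes its cleanest form and where the hypothesis actually bites; this identification preserves harmonicity and alters $B^j_k$ only by a factor bounded uniformly on $K$.

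Writing $F_k := L^k\otimes E\otimes K_X^*$ and using the extremal characterisation of $B^j_k$, the estimate reduces to producing a pointwise bound $\abs{\tilde s(x_0)}^2 \le C\,k^{n-j}$ for every harmonic $\tilde s \in \Omega^{n,j}(X,F_k)$ with $\norm{\tilde s}_{L^2}=1$. Applying BKN to $\eta\tilde s$ for a smooth cutoff $\eta$ supported in a coordinate ball centred at $x_0$ yields
\[
k\int \eta^2 \bigl\langle [\sqrt{-1}R^L,\Lambda]\tilde s,\tilde s\bigr\rangle_h \;\le\; C_1 \int \abs{d\eta}^2 \abs{\tilde s}^2_h + C_2 \int \eta^2 \abs{\tilde s}^2_h,
\]
in which $C_1,C_2$ absorb the $E\otimes K_X^*$-curvature and the torsion of $\om$, uniformly bounded on a neighborhood of the compact $K$; by Nakano $q$-semipositivity and $j\ge q$ the left-hand side is non-negative, and this is the seed for localising $\tilde s$ near $x_0$.

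The heart of the proof is converting this weighted $L^2$ information into the sharp pointwise bound. Following the rescaling method of \cite{BB:02} and \cite[Theorem 3.1]{Wh:17}, I would work in normal holomorphic coordinates $z$ at $x_0$, trivialise $L$ with $h^L=e^{-\phi}$, $\phi(z)=\sum \lambda_i\abs{z_i}^2 + O(\abs{z}^3)$, and set $w=\sqrt{k}\,z$ on a $w$-ball of fixed radius. The rescaled problem converges on compact sets of $\C^n$ to a model $\bar\partial$-Laplacian with Gaussian weight $e^{-\sum \lambda_i\abs{w_i}^2}$, and by Nakano $q$-semipositivity of this quadratic the model Bergman density on $(n,j)$-forms with $j\ge q$ is bounded by an explicit function of the $\lambda_i$ alone (see \cite{BB:02,MM}). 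The combined scaling---the volume Jacobian $k^n$ together with the $k^{-j}$ arising from the rescaling of $(0,j)$-indices under $dw^I = k^{j/2}\,dz^I$---then delivers the factor $k^{n-j}$.

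The main obstacle is uniformity of the constant over $x_0 \in K$ together with the fact that $\om$ is only Hermitian: the cutoff $\eta$ has to be chosen on a ball of radius $\sim k^{-1/2}$ so that the gradient error $\abs{d\eta}^2 \sim k$ is absorbed by the non-negative left-hand side of the BKN inequality, and the rescaled model limit must be insensitive to the non-K\"ahler torsion of $\om$; compactness of $K$ and smoothness of $\om$, $h^L$, $h^E$ on a neighborhood of $K$ deliver the required uniform control. The last assertion of the theorem, valid for all $1\le j\le n$ in the semipositive case, is then immediate: $R^L \ge 0$ is precisely Nakano $1$-semipositivity, and so every $j \ge 1$ is covered.
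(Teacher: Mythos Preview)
Your high-level reductions match the paper's: pass to $(n,j)$-forms by twisting with $K_X^*$, and use the combinatorial identity to upgrade Nakano $q$-semipositivity to Nakano $j$-semipositivity for every $j\ge q$ (this is exactly Proposition~\ref{prop_refine_semip}); the final sentence, deducing the semipositive case from $q=1$, is also how the paper concludes.

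The gap is in the technical core. The BKN inequality you display is, as you yourself note, only ``the seed'': once the left-hand side is known to be non-negative, the inequality merely bounds a non-negative quantity above by something positive, and no localisation of $\tilde s$ follows from it. Your subsequent rescaling-to-model step does not repair this. In the genuinely $q$-semipositive regime with $q\ge2$ some eigenvalues $\lambda_i$ may be negative, so the weight $e^{-\sum\lambda_i|w_i|^2}$ is not integrable on $\C^n$ and the model $L^2$-Bergman density you invoke need not exist; and even when it does, the standard rescaling heuristic produces a factor $k^n$, not $k^{n-j}$---your bookkeeping of an extra $k^{-j}$ via $dw^I=k^{j/2}dz^I$ does not survive once you also rescale the pointwise Hermitian norm and the volume form consistently.

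The paper's route is different and sidesteps these issues. Following \cite{BB:02,Wh:16}, to a harmonic $\alpha\in\cH^{n,j}(X,L^k\otimes E)$ one attaches the Siu $(n-1,n-1)$-form $T_\alpha$ and proves the \emph{pointwise} differential inequality $i\dbar\ddbar(T_\alpha\wedge\omega_{j-1})\ge -C\,|\alpha|_h^2\,\omega_n$ on a neighbourhood of $K$; the curvature hypothesis enters solely through $c_1(L,h^L)\wedge T_\alpha\wedge\omega_{j-1}=(2\pi)^{-1}\langle[\sqrt{-1}R^L,\Lambda]\alpha,\alpha\rangle_h\,\omega_n\ge0$, which is precisely Nakano $j$-semipositivity on $(n,j)$-forms. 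Integrating this subharmonicity-type bound over concentric balls yields the crucial growth estimate $\int_{|z|<r}|\alpha|^2\,dv_X\le C\,r^{2j}\|\alpha\|^2$ (Lemma~\ref{keylemma}(1)); combining it with a standard elliptic sub-mean-value bound at scale $r\sim k^{-1/2}$ (Lemma~\ref{keylemma}(2)) gives $|\alpha(x_0)|^2\le Ck^n\cdot C'k^{-j}\|\alpha\|^2$, hence $B^j_k\le Ck^{n-j}$ on $K$. The sharp exponent thus comes from the $r^{2j}$ in the growth lemma, not from any rescaling of form indices.
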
  
  
As a direct application, it leads to the refinement of \cite[Theorem 1.1]{BB:02} and \cite[Theorem 1.2]{Wh:16} as follows, refer to Definition \ref{def_covering} for $\Gamma$-covering manifolds. 
\begin{cor}\label{thm_covering}
	Let $(X,\omega)$ be a $\Gamma$-covering manifold of dimension $n$, and
	let $(L,h^L)$ and $(E,h^E)$ be two $\Gamma$-invariant 
	holomorphic Hermitian line bundles on $X$. Let $1\leq q\leq n$ and $(L,h^L)$ be Nakano $q$-semipositive with respect to $\omega$ on $X$.
	Then there exists $C>0$ such that for any $k\geq 1$, $q\leq j\leq n$, we have
	\begin{equation}
	\dim_{\Gamma}{\overline{H}}^{0,j}_{(2)}(X, L^k\otimes E)=
	\dim_{\Gamma}\cH^{0,j}(X, L^k\otimes E)
	\leq C k^{n-j}.
	\end{equation}
	In particular, if $(L,h^L)$ is semipositive on $X$, 
	the estimate holds for all $k\geq 1$ and $1\leq j\leq n$.
\end{cor}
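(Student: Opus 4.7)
The plan is to deduce the corollary from Theorem \ref{thm_local} by integrating the pointwise Bergman density bound over a fundamental domain for the $\Gamma$-action. Since $X$ is a $\Gamma$-covering manifold, the quotient $X/\Gamma$ is compact, so we may fix a relatively compact (Borel) fundamental domain $U \subset X$. All of $\omega$, $h^L$ and $h^E$ are $\Gamma$-invariant, hence so are the $\ddbar$-Laplacian $\Box^{L^k\otimes E}$ acting on $(0,j)$-forms with values in $L^k\otimes E$, its kernel projector $P^j_k$, and the associated Bergman density function $B^j_k(x)$ defined in \eqref{e:Bergfcn}.

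The key analytic input is the standard $\Gamma$-trace formula from $L^2$-index theory (Atiyah), which identifies the von Neumann $\Gamma$-dimension of the harmonic space with the integral of the diagonal Bergman kernel over $U$:
\begin{equation*}
\dim_{\Gamma}\cH^{0,j}(X,L^k\otimes E) \;=\; \tr_{\Gamma}\!\left(P^j_k\right) \;=\; \int_{U} B^j_k(x)\, dv_X(x).
\end{equation*}
Because $(L,h^L)$ is Nakano $q$-semipositive on all of $X$, it is in particular semipositive on a neighborhood of the compact set $\overline{U}$, so Theorem \ref{thm_local} applies with $K=\overline{U}$ and furnishes a constant $C>0$ (independent of $k$ and of $x\in \overline{U}$) with $B^j_k(x)\leq Ck^{n-j}$ for every $k\geq 1$ and every $q\leq j\leq n$. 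Integrating over the fundamental domain gives
\begin{equation*}
\dim_{\Gamma}\cH^{0,j}(X,L^k\otimes E) \;\leq\; C\,\vol(U)\,k^{n-j},
\end{equation*}
which is the desired bound. The identification $\dim_{\Gamma}\overline{H}^{0,j}_{(2)}(X,L^k\otimes E)=\dim_{\Gamma}\cH^{0,j}(X,L^k\otimes E)$ is the $\Gamma$-equivariant Hodge decomposition: the reduced $L^2$-Dolbeault cohomology is isometrically isomorphic, as a Hilbert $\Gamma$-module, to the space of $L^2$ harmonic $(0,j)$-forms. The last sentence of the corollary is immediate: semipositivity of $R^{(L,h^L)}$ is the case $q=1$ of Nakano $q$-semipositivity, so the estimate extends to all $1\leq j\leq n$.

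The substance of the argument lies entirely in Theorem \ref{thm_local}; the only genuine concern here is that the pointwise local estimate coming from Theorem \ref{thm_local} is uniform over $\overline{U}$ with a constant that does not depend on $k$, which is exactly what the theorem provides. The identification of the $\Gamma$-trace with the integral of the Bergman density, together with the existence of a relatively compact fundamental domain, are standard facts for $\Gamma$-covering manifolds that I would simply cite rather than reprove. Hence there is no serious obstacle beyond quoting Theorem \ref{thm_local} correctly and invoking the von Neumann trace formalism.
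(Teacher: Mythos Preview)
Your proof is correct and follows essentially the same route as the paper: integrate the pointwise Bergman density bound from Theorem \ref{thm_local} over a relatively compact fundamental domain $U$, using the von Neumann $\Gamma$-trace identity $\dim_{\Gamma}\cH^{0,j}=\int_U B^j_k\,dv_X$. One small wording slip: where you write ``it is in particular semipositive on a neighborhood of $\overline{U}$'' you mean \emph{Nakano $q$-semipositive} (semipositivity is the stronger $q=1$ condition), but this does not affect the argument since Theorem \ref{thm_local} only needs the $q$-semipositivity you already have.
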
   
Note that holomorphic Morse inequalities on covering manifolds
were obtained in \cite{TCM:01,MTC:02}. 

Secondly, we obtain a refined estimate of $L^2$-cohomology on  Hermitian manifolds from the local estimate of $B_k^j(x)$ as \cite[Theorem 1.1]{Wh:17}. It provides an uniform approach to study the cohomology of high tensor power of Nakano $q$-semipositive line bundles over various compact and non-compact manifolds. 

Let $(X,\omega)$ be a Hermitian manifold of dimension $n$. 
Let $dv_X:=\omega^n/n!$ be the volume form on $X$.
Let $(L,h^L)$ and $(E,h^E)$ be holomorphic Hermitian vector bundles on $X$ with $\rank(L)=1$.
We denote by $(L^2_{0,q}(X,L^k \otimes E),\|\cdot\|)$ the space
of square integrable $(0,q)$-forms with values in $L^k \otimes E$ 
with respect to the $L^2$ inner product induced by the above data.
We denote by $\ddbar^E_k$ the maximal extension of the Dolbeault operator
on $L^2_{0,\bullet}(X,L^k \otimes E)$ and by 
$\ddbar^{E*}_k$ its Hilbert space adjoint.
Let $\cH^{0,q}(X,L^k \otimes E)$ be the space 
of harmonic $(0,q)$-forms with values in $L^k \otimes E$ on $X$. For a given $0\leq q\leq n$, we say that 
\textbf{the concentration condition holds in bidegree $(0,q)$ 
	for harmonic forms with values in $L^k\otimes E$ for large $k$}, 
if there exists a compact subset $K\subset X$ and $C_0>0$ such 
that for sufficiently large $k$, we have
\begin{equation}
\|s\|^2\leq C_0\int_K |s|^2 dv_X,
\end{equation}
for $s\in \Ker(\ddbar^E_k)\cap \Ker(\ddbar^{E*}_k)\cap 
L^2_{0,q}(X,L^k\otimes E)$. The set $K$ is called the exceptional compact set 
of the concentration. 
We say that \textbf{the fundamental estimate holds in 
	bidegree $(0,q)$ for forms with values in $L^k\otimes E$ for large $k$}, 
if there exists a compact subset $K\subset X$ and $C_0>0$ 
such that for sufficiently large $k$, we have 
\begin{equation}
\|s\|^2\leq C_0(\|\ddbar^E_k s\|^2+\|\ddbar^{E,*}_k s\|^2+\int_K |s|^2 dv_X),
\end{equation}
for $s\in \Dom(\ddbar^E_k)\cap \Dom(\ddbar^{E*}_k)\cap L^2_{0,q}(X,L^k\otimes E)$.
The set $K$ is called the exceptional compact set of the estimate. 
 
\begin{thm}\label{thm_L2general}
	Let $(X,\omega)$ be a Hermitian manifold of dimension $n$ and let $(L,h^L)$ and $(E,h^E)$ be holomorphic Hermitian line bundles on $X$. Let $1\leq q\leq n$. Let the concentration condition
	holds in bidegree $(0,q)$ for harmonic forms with values in $L^k\otimes E$ for large $k$.
	Let $(L,h^L)$ be Nakano $q$-semipositive with respect to $\omega$ on a neighborhood
	of the exceptional set $K$.
	Then there exists $C>0$ such that for sufficiently large $k$, we have
	\begin{eqnarray}
	\dim \cH^{0,q}(X,L^k\otimes E)&\leq& Ck^{n-q}.
	\end{eqnarray}
	The same estimate also holds for reduced $L^2$-Dolbeault cohomology groups,
	\begin{equation}
	\dim \overline{H}^{0,q}_{(2)}(X,L^k\otimes E)\leq Ck^{n-q}.
	\end{equation}
	In particular, if the fundamental estimate holds in bidegree $(0,q)$ for forms with values in $L^k\otimes E$ for large $k$, the same estimate holds for $L^2$-Dolbeault cohomology groups,
	\begin{equation} 
	\dim H^{0,q}_{(2)}(X,L^k\otimes E)\leq Ck^{n-q}.
	\end{equation}
\end{thm}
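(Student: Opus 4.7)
The plan is to bootstrap the pointwise Bergman-density bound of Theorem \ref{thm_local} to a global dimension estimate by means of the concentration condition, and then to transfer the estimate to the reduced and non-reduced $L^2$-Dolbeault cohomology via Hodge theory.

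For the bound on $\dim \cH^{0,q}(X, L^k\otimes E)$, pick any orthonormal family $\{s_1,\ldots,s_N\}$ in the harmonic space. Applying the concentration condition to each $s_j$ and summing,
\begin{equation*}
N \;=\; \sum_{j=1}^N \|s_j\|^2 \;\leq\; C_0 \int_K \sum_{j=1}^N |s_j(x)|^2_h \, dv_X \;\leq\; C_0 \int_K B_k^q(x)\, dv_X,
\end{equation*}
where the last inequality uses the fact that the diagonal of the Bergman kernel of any finite-dimensional subspace is pointwise dominated by $B_k^q$ (extend $\{s_j\}$ to an orthonormal basis of the full harmonic space). Since $(L,h^L)$ is Nakano $q$-semipositive on a neighborhood of $K$, Theorem \ref{thm_local} gives $B_k^q(x) \leq C_1 k^{n-q}$ uniformly on $K$, whence $N \leq C_0 C_1 \vol(K)\, k^{n-q}$. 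Taking a supremum over $N$ yields both the dimension bound and the finite-dimensionality of $\cH^{0,q}(X, L^k\otimes E)$.

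The reduced $L^2$-cohomology is isomorphic to the harmonic space via the weak Hodge--Kodaira decomposition
\begin{equation*}
L^2_{0,q}(X, L^k\otimes E) \;=\; \cH^{0,q}(X, L^k\otimes E) \;\oplus\; \overline{\ran \ddbar^E_k} \;\oplus\; \overline{\ran \ddbar^{E*}_k},
\end{equation*}
which holds on any Hermitian manifold for the maximal $L^2$-extension of $\ddbar$; the dimension bound transfers at once to $\overline{H}^{0,q}_{(2)}(X, L^k\otimes E)$. Finally, under the fundamental estimate in bidegree $(0,q)$, a standard argument (decompose a sequence realizing the closure of the range into its harmonic component plus orthogonal complement, apply the fundamental estimate to the latter, and extract a strongly convergent subsequence via the compact embedding on $K$) shows that the relevant range of $\ddbar^E_k$ is closed, so $H^{0,q}_{(2)} = \overline{H}^{0,q}_{(2)}$ and the same bound applies.

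The only step that goes beyond a direct substitution of Theorem \ref{thm_local} into the concentration inequality is this closed-range argument in the non-reduced case; it is also the only point where the fundamental estimate (rather than mere concentration of harmonic forms) is actually used. Everything else is a packaging of the local Bergman-density bound together with the Hodge-theoretic identification of harmonic forms with reduced cohomology.
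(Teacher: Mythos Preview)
Your proof is correct and follows essentially the same route as the paper: integrate the pointwise Bergman density bound from Theorem~\ref{thm_local} over the exceptional set $K$ using the concentration condition, invoke the weak Hodge isomorphism $\overline{H}^{0,q}_{(2)}\cong\cH^{0,q}$, and for the non-reduced case use that the fundamental estimate forces closed range. The only difference is cosmetic: the paper cites the closed-range fact directly (cf.\ \cite[Theorem~3.1.8]{MM}) rather than sketching the compactness argument you outline.
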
    
 
Finally, by Theorem \ref{thm_L2general}, we can study the dimension of cohomology on $q$-convex manifolds with semipositive line bundles. Holomorphic Morse inequalities for $q$-convex manifolds were obtained by Bouche \cite{Bouche:89} and \cite[Section 3.5]{MM}.
\begin{thm}\label{thm_main}
	Let $X$ be a $q$-convex manifold of dimension $n$ and $1\leq q\leq n$, and let $(L,h^L)$ and $(E,h^E)$ be holomorphic Hermitian line bundles on $X$.  Assume $(L,h^L)\geq 0$ on a neighborhood of the exceptional subset $K$.
	Then, there exists $C>0$ such that for every $j\geq q$ and $k\geq 1$,
	\begin{equation}
	\dim H^{j}(X,L^k\otimes E)\leq Ck^{n-j}. 
	\end{equation}  
\end{thm}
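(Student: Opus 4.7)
\medskip
\noindent\textbf{Proof plan.} The plan is to reduce to Theorem~\ref{thm_L2general} by running the standard Andreotti--Grauert / $L^2$ procedure on the sublevel sets of a $q$-convex exhaustion, in the spirit of \cite[\S3.5]{MM} and of \cite{Wh:17} (which handled the case $q=1$). First I would fix a smooth exhaustion $\rho\colon X\to\R$ realizing the $q$-convex structure, so that $\sqrt{-1}\,\partial\ddbar\rho$ has at least $n-q+1$ positive eigenvalues on $X\setminus K$, with $K\subset\{\rho<c_0\}$ for some $c_0\in\R$. By the Andreotti--Grauert finiteness theorem, the restriction
\[
H^{j}(X,L^{k}\otimes E)\longrightarrow H^{j}(X_c,L^{k}\otimes E),\qquad X_c:=\{\rho<c\},
\]
is an isomorphism for every $j\geq q$ and every $c>c_0$; hence it suffices to bound $\dim H^{j}(X_c,L^{k}\otimes E)$ for one fixed $c>c_0$.

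Next I would fix a Hermitian metric $\omega$ on $X$ and modify $h^L$ outside $K$ by $h^{L}_{\chi}:=h^{L}e^{-\chi(\rho)}$, where $\chi\colon\R\to\R$ is a smooth convex increasing function that is constant on a neighborhood of $(-\infty,c_0]$; in particular $h^L_\chi=h^L$ near $K$. With $\chi'$ growing fast enough on $(c_0,c]$, the extra curvature $\sqrt{-1}\,\partial\ddbar\chi(\rho)\geq \chi'(\rho)\sqrt{-1}\,\partial\ddbar\rho$ on $X_c\setminus K$ becomes as positive as one wishes along the $n-q+1$ good directions of $\sqrt{-1}\,\partial\ddbar\rho$. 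A standard Bochner--Kodaira--Nakano computation with boundary term (in the form of \cite[\S3.5]{MM}) then yields simultaneously the Dolbeault isomorphism
\[
H^{j}(X_c,L^{k}\otimes E)\cong \overline{H}^{0,j}_{(2)}(X_c,L^{k}\otimes E)
\]
computed with respect to $(\omega,h^L_\chi\otimes h^E)$ for $j\geq q$, and the fundamental estimate in bidegree $(0,j)$ for $j\geq q$ with exceptional set $K$ (a cut-off argument absorbs the error on $K$ once the pointwise curvature positivity on $X_c\setminus K$ is in place).

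Because $h^L_\chi=h^L$ near $K$ and $(L,h^L)\geq 0$ there, the bundle $(L,h^L_\chi)$ is Nakano $q$-semipositive (indeed Nakano $1$-semipositive) on a neighborhood of $K$. Applying Theorem~\ref{thm_L2general} on the Hermitian manifold $(X_c,\omega)$ with coefficient bundle $(L^{k}\otimes E,(h^L_\chi)^{\otimes k}\otimes h^E)$ then gives $C>0$ with
\[
\dim \overline{H}^{0,j}_{(2)}(X_c,L^{k}\otimes E)\leq Ck^{n-j}\qquad (j\geq q,\ k\geq 1),
\]
and combining with the Andreotti--Grauert isomorphism yields the stated bound.

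The hard part will be the second step: producing, with a single choice of $\chi$ and $\omega$, both the Dolbeault-to-$L^{2}$ identification and the fundamental estimate on $X_c$ in every bidegree $(0,j)$ with $j\geq q$. This is the classical $q$-convex Bochner--Kodaira package from \cite[\S3.5]{MM}, already used for $q=1$ in \cite{Wh:17}; once it is in place, Theorem~\ref{thm_L2general} supplies the factor $k^{n-j}$ with no further argument.
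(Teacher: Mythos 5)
Your overall route --- restrict to a sublevel set $X_c$ via Andreotti--Grauert, twist $h^L$ by $e^{-\chi(\varrho)}$ to get the fundamental estimate, and feed everything into Theorem \ref{thm_L2general} --- is exactly the paper's. The genuine gap is at the point where you ``fix a Hermitian metric $\omega$ on $X$'' and then claim that taking $\chi'$ large produces the required positivity. For an arbitrary $\omega$, the operator $[\sqrt{-1}\chi'(\varrho)\dbar\ddbar\varrho,\Lambda]$ acting on $(n,j)$-forms has eigenvalues equal to $\chi'(\varrho)$ times sums of $j$ eigenvalues of $\dbar\ddbar\varrho$ computed with respect to $\omega$; the $q$-convexity only guarantees $n-q+1$ positive eigenvalues, the remaining $q-1$ may be negative with large modulus, and a sum of $j\geq q$ eigenvalues can then be negative (already for $n=q=2$ with eigenvalues $1$ and $-10$). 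Increasing $\chi'$ scales the good and bad directions by the same factor, so it makes the curvature term \emph{more} negative, and neither the fundamental estimate in bidegree $(0,j)$ nor the semipositivity of the twisted metric follows. The paper avoids this by first replacing $\omega$ with the adapted metric of Lemma \ref{lowbd_rho_lem} (from \cite[Lemma 3.5.3]{MM}), which rescales the positive directions of $\dbar\ddbar\varrho$ so that $\left\langle (\dbar\ddbar\varrho)(w_l,\ov{w}_k)\ov{w}^k\wedge i_{\ov{w}_l}s,s \right\rangle_h\geq C_1|s|^2$ for all $(0,j)$-forms with $j\geq q$ on the relevant annulus; this one choice is what makes both Proposition \ref{1coxFE} and Proposition \ref{prop_mod_chi} work, and it cannot be deferred to the end.

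A related, smaller slip: you verify Nakano $q$-semipositivity of $(L,h^L_\chi)$ only on a neighborhood of $K$, but the set appearing in the hypothesis of Theorem \ref{thm_L2general} is the exceptional compact set of the fundamental estimate you have just produced, which in Proposition \ref{1coxFE} is a compact $K'\subset X_c$ generally strictly larger than the exceptional set $K$ of the $q$-convex structure. On $K'$ away from $K$ the metric has been twisted, and $(L,h^L)\geq 0$ alone does not control $R^{L_\chi}$ there unless one also knows that $\chi'(\varrho)\dbar\ddbar\varrho$ contributes nonnegatively to the $(n,j)$-curvature term --- which is precisely Proposition \ref{prop_mod_chi} and again needs the adapted metric. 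Once you insert Lemma \ref{lowbd_rho_lem} at the outset and check semipositivity of $(L,h^L_\chi)$ on all of $X_c$ (using Proposition \ref{P:semipos} to convert $(L,h^L)\geq 0$ into Nakano $j$-semipositivity for every $j$ and every metric), your argument coincides with the paper's proof via Theorem \ref{T:qconvex}.
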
 

The extremal case is also interesting when the $\omega$-trace of $R^{(L,h^L)}$ is non-negative (i.e., $n$-semipositive), see Sec.\ \ref{sec_prel}. 
We obtain the finiteness of dimension of cohomology of high tensor power of such line bundles in Sec.\ \ref{sec_pf_local} and \ref{sec_pf_main}.
Related to the Nakano $n$-semipositive and the $\omega$-trace of curvature tensor, a direct consequence from \cite{Yau:78}, \cite[Ch.III.(1.34)]{Kob:87} and \cite[5.1 Corollary]{CDP:15}, which strengthens \cite[Theorem B (A)]{Wu:81}, is as follows:
If a compact K\"{a}hler manifold $X$ possesses a quasi-positive $(1,1)$-form representing the first Chern class $c_1(X)$, then $X$ is projective and rationally connected. And a compact, simply connected, K\"{a}hler manifolds with nonnegative bisectional curvature is projective and rationally connected, see Proposition \ref{thm_rc} and \ref{cor_sc}. 
 
For the Nakano $q$-positive cases, inspired by \cite{M:92}, \cite[Theorem 1.1, 2.5]{MM:02} and \cite[Section 1.5]{MM}, we generalize the estimates of modified Dirac operator $D^{c,A}_k$ (see \cite[Definition 1.3.6, Section 1.5]{MM}) of high tensor powers of positive line bundles to the Nakano $q$-positive case for all $1\leq q\leq n$ as follows.  
\begin{thm}\label{thm_main_spectral}
	Let $(X,J)$ be a compact smooth manifold with almost complex structure $J$ and $\dim _{\R}X=2n$. Let $g^{TX}$ be a Riemannian metric compatible with $J$ and $\om:=g^{TX}(J\cdot,\cdot)$ be the real $(1,1)$-forms on $X$ induced by $g^{TX}$ and $J$. Let $(E,h^E)$ and $(L,h^L)$ be Hermitian vector bundles on $X$ with $\rank(L)=1$. Let $\nabla^E$ and $\nabla^L$ be Hermitian connections on $(E,h^E)$ and $(L,h^L)$ and let $R^E:=(\nabla^E)^2$ and $R^L:=(\nabla^L)^2$ be the curvatures. Let $\frac{\sqrt{-1}}{2\pi}R^L$ be compatible with $J$. Assume $1\leq q\leq n$ and $(L,h^L)$ is Nakano $q$-positive with respect to $\om$ on $X$ (see also (\ref{hypo_pos})). Then there exists $C_L>0$ such that for any $k\in \N$ and any $s\in \Omega^{0,\geq q}(X,L^k\otimes E)$,
	\be
	\|D_k^{c,A}s\|^2\geq (2\mu_qk-C_L)\|s\|^2,
	\ee
	where the constant $\mu_q>0$ defined in (\ref{eq_mu_def}).
	Especially, for $k$ large enough,
	\be
	\Ker\left(D^{c,A}_k|_{\Omega^{0,\geq q}(X,L^k\otimes E)}\right)=0.
	\ee
\end{thm}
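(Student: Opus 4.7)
The strategy parallels \cite[Section 1.5]{MM} (see also \cite{M:92,MM:02}), which handle the case of strict positivity; the task is to verify that their argument still goes through under the weaker Nakano $q$-positivity hypothesis. The main analytic input is a Lichnerowicz-type identity for $(D_k^{c,A})^2$ on $\Omega^{0,\bullet}(X, L^k\otimes E)$: the modification of the spin$^c$ Dirac operator by the connection $A$ of \cite[Def.\ 1.3.6]{MM} is designed precisely so that one has a decomposition
\begin{equation*}
(D_k^{c,A})^2 \;=\; 2\Box_k \;+\; 2k\,\mathcal{R}_\omega(R^L) \;+\; \mathcal{R}_0,
\end{equation*}
in which $\Box_k := \ddbar_k\ddbar_k^{*} + \ddbar_k^{*}\ddbar_k \geq 0$ is the Kodaira Laplacian, $\mathcal{R}_\omega(R^L)$ is a self-adjoint fiberwise endomorphism of $\Lambda^{0,\bullet}T^*X \otimes L^k \otimes E$ whose spectrum is built from the eigenvalues of $\sqrt{-1}R^L$ with respect to $\omega$, and $\mathcal{R}_0$ is a zeroth-order operator whose operator norm is uniformly bounded in $k$ --- all torsion contributions from the non-K\"ahlerness of $\omega$ and non-integrability of $J$ are absorbed into $\mathcal{R}_0$.

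Given this identity, the argument reduces to a pointwise eigenvalue estimate. Fix $x \in X$ and diagonalize $\sqrt{-1}R^L_x$ relative to $\omega_x$, with eigenvalues $\lambda_1(x) \leq \cdots \leq \lambda_n(x)$. A standard fiberwise computation shows that the smallest eigenvalue of $\mathcal{R}_\omega(R^L)|_x$ acting on $\Lambda^{0,j}T_x^*X \otimes (L^k\otimes E)_x$ is bounded below by $\lambda_1(x)+\cdots+\lambda_q(x)$ whenever $j\geq q$: for $j=q$ this is the minimal sum of $q$ eigenvalues directly, and for $j>q$ Nakano $q$-positivity forces $\lambda_{q+1}(x),\dots,\lambda_n(x)>0$ (were $\lambda_q(x) \leq 0$, then $\lambda_1(x),\dots,\lambda_q(x)$ would all be $\leq 0$ and their sum non-positive, contradicting $q$-positivity), so the sum of the $j$ smallest eigenvalues still dominates that of the $q$ smallest. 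Compactness of $X$ and Nakano $q$-positivity therefore yield $\mathcal{R}_\omega(R^L) \geq \mu_q\,\Id$ pointwise on $\Omega^{0,\geq q}$, where $\mu_q > 0$ is the constant in the statement.

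Integrating this bound, discarding $\langle \Box_k s, s\rangle \geq 0$, and setting $C_L := \sup_X \|\mathcal{R}_0\|$ gives
\begin{equation*}
\|D_k^{c,A} s\|^2 \;\geq\; (2\mu_q k - C_L)\|s\|^2
\end{equation*}
for every $s \in \Omega^{0,\geq q}(X, L^k\otimes E)$, from which the kernel vanishing for $k$ large is immediate. The main obstacle, I expect, is not the $q$-positivity bookkeeping but rather verifying the Lichnerowicz identity with a genuinely $k$-uniform remainder $\mathcal{R}_0$ in the almost-complex setting: one must check that no torsion or non-integrability contribution acquires a factor of $k$. This is precisely what \cite[Sections 1.3--1.5]{MM} accomplishes, so in practice the proof should reduce to citing those results together with the elementary eigenvalue estimate above.
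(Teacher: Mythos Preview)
Your approach is essentially the same as the paper's: invoke the Lichnerowicz-type lower bound from \cite[Section~1.5]{MM} (stated here as Theorem~\ref{thm_mm}), namely $\|D^{c,A}_k s\|^2 \geq 2k\langle -w_d s,s\rangle - C\|s\|^2$, and then reduce to a pointwise eigenvalue estimate for $-w_d$ on $\Lambda^{0,j}T^*_xX$ with $j\geq q$. Your argument that $\lambda_q(x)>0$ (hence $\lambda_{q+1}(x),\dots,\lambda_n(x)>0$, hence the sum of the $j$ smallest eigenvalues dominates $\mu_q$) is a clean direct alternative to the paper's appeal to Proposition~\ref{prop_refine_semip}.

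One caveat: the specific decomposition $(D_k^{c,A})^2 = 2\Box_k + 2k\,\mathcal{R}_\omega(R^L) + \mathcal{R}_0$ that you write, with $\Box_k = \ddbar_k\ddbar_k^* + \ddbar_k^*\ddbar_k$, is not the form the Lichnerowicz formula takes in the genuinely almost-complex setting of the theorem---there is no Dolbeault complex when $J$ is non-integrable, and the nonnegative piece in \cite[(1.5.30), (1.5.34)]{MM} is a Bochner Laplacian rather than a Kodaira Laplacian. This does not damage your argument, since you only use the resulting \emph{inequality} and correctly defer to \cite[Sections~1.3--1.5]{MM} for it; but you should quote the lower bound (Theorem~\ref{thm_mm}) directly rather than an identity involving $\Box_k$.
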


This paper is organized as follows. In Sec.  \ref{sec_prel} we introduce the notions and basic facts on Definition \ref{def_qsemip}. In Sec. \ref{sec_pf_local} we provide the local estimate of the Bergam density function associated with Nakano $q$-semipositive line bundles, Theorem \ref{thm_local}, and its applications, Corollary \ref{thm_covering} and Theorem \ref{thm_L2general}.
In Sec. \ref{sec_pf_main} we prove Theorem \ref{thm_main} and related results. In Sec. \ref{sec_spec}, estimates for the modified Dirac operator on Nakano $q$-positive line bundle on almost complex manifolds, Theorem \ref{thm_main_spectral}, are given. From \cite{BB:02}, we see Theorem \ref{thm_local}, Corollary \ref{thm_covering}, Theorem \ref{thm_L2general} and \ref{thm_main} give the optimal order $O(k^{n-j})$ of dimension of the cohomology. And Theorem \ref{thm_main_spectral} provides a precise bound $\mu_q$ for $q$-positive line bundles along the lines of \cite{MM:02,MM}.  
For techniques and formulations of this paper, we refer the reader to \cite{BB:02,MM,Siu:85,Wh:16,Wh:17}.  

\section{Preliminaries}\label{sec_prel}
\subsection{$L^2$-cohomology}
Let $(X, \omega)$ be a Hermitian manifold of dimension $n$ and $(F, h^F)$ a holomorphic Hermitian vector bundle over $X$. Let $\Omega^{p,q}(X, F)$ be the space of smooth $(p,q)$-forms on $X$ with values in $F$ for $p,q\in \N$. The volume form is $dv_{X}:=\frac{\omega^n}{n!} $.

The $L^2$-scalar product is given by $\langle s_1,s_2 \rangle=\int_X \langle s_1(x), s_2(x) \rangle_h dv_X(x)$ on $\Omega^{p,q}(X, F)$, where $\langle\cdot,\cdot\rangle_h:=\langle\cdot,\cdot\rangle_{h^F,\omega}$ is the pointwise Hermitian inner product induced by $\omega$ and $h^F$. We denote by $L^2_{p,q}(X, F)$, the $L^2$ completion of $\Omega^{p,q}_0(X, F)$, which is the subspace of $\Omega^{p,q}(X, F)$ consisting of elements with compact support.
 
Let $\ddbar^{F}: \Omega_0^{p,q} (X, F)\rightarrow L^2_{p,q+1}(X,F) $ be the Dolbeault operator and let  $ \ddbar^{F}_{\max} $ be its maximal extension (see \cite[Lemma 3.1.2]{MM}). From now on we still denote the maximal extension by $ \ddbar^{F} :=\ddbar^{F}_{\max} $ and the associated Hilbert space adjoint by $\ddbar^{F*}:=\ddbar^{F*}_H:=(\ddbar^{F}_{\max})_H^*$. Consider the complex of closed, densely defined operators
$L^2_{p,q-1}(X,F)\xrightarrow{\ddbar^{F}}L^2_{p,q}(X,F)\xrightarrow{\ddbar^{F}} L^2_{p,q+1}(X,F)$. Note that
 $(\ddbar^{F})^2=0$. By \cite[Propositon 3.1.2]{MM}, the operator defined by
\begin{eqnarray}\label{eq40}\nonumber
\Dom(\square^{F})&=&\{s\in \Dom(\ddbar^{F})\cap \Dom(\ddbar^{F*}): 
\ddbar^{F}s\in \Dom(\ddbar^{F*}),~\ddbar^{F*}s\in \Dom(\ddbar^{F}) \}, \\ 
\square^{F}s&=&\ddbar^{F} \ddbar^{F*}s+\ddbar^{F*} \ddbar^{F}s \quad \mbox{for}~s\in \Dom(\square^{F}),
\end{eqnarray}
is a positive, self-adjoint extension of Kodaira Laplacian, called the Gaffney extension.

\begin{defn}[\cite{MM}]
	The space of harmonic forms $\cH^{p,q}(X,F)$ is defined by 
	\begin{equation}\label{eq45}
	\cH^{p,q}(X,F):=\Ker(\square^{F})=\{s\in \Dom(\square^{F})\cap L^2_{p,q}(X, F): \square^{F}s=0 \}.
	\end{equation}
	The $q$-th reduced $L^2$-Dolbeault cohomology is defined by 
	\begin{equation}\label{eq46}
	\overline{H}^{0,q}_{(2)}(X,F):=\dfrac{\Ker(\ddbar^{F})\cap  L^2_{0,q}(X,F) }{[ \Im( \ddbar^{F}) \cap L^2_{0,q}(X,F)]},
	\end{equation}
	where $[V]$ denotes the closure of the space $V$. The $q$-th (non-reduced) $L^2$-Dolbeault cohomology is defined by 
	\begin{equation}
	H^{0,q}_{(2)}(X,F):=\dfrac{\Ker(\ddbar^{F})\cap  L^2_{0,q}(X,F) }{ \Im( \ddbar^{F}) \cap L^2_{0,q}(X,F)}.
	\end{equation}
\end{defn}  

According to the general regularity theorem of elliptic operators, 
$s\in \cH^{p,q}(X,F) $ implies $s\in\Omega^{p,q}(X,F)$. By weak Hodge decomposition (cf.\ \cite[(3.1.21) (3.1.22)]{MM}),  
\begin{equation}\label{eq47}
\overline{H}^{0,q}_{(2)}(X,F)\cong \cH^{0,q}(X,F)
\end{equation} for any $q\in \N$. The $q$-th cohomology of the sheaf of holomorphic sections of $F$ is isomorphic to the the $q$-th Dolbeault cohomology, $H^q(X,F)\cong H^{0,q}(X,F)$. 

For a given $0\leq q \leq n$, we say \textbf{the fundamental estimate holds in bidegree $(0,q)$ for forms with values in $F$}, if there exists a compact subset $K\subset X$ and $C>0$ such that 
\begin{equation}
\|s\|^2\leq C(\|\ddbar^F s\|^2+\|\ddbar^{F*}\|^2+\int_K|s|^2dv_X),
\end{equation}
for $s\in \Dom(\ddbar^F)\cap\Dom(\ddbar^{F,*})\cap L^2_{0,q}(X,F)$. $K$ is called the exceptional compact set of the estimate. If the fundamental estimate holds, the reduced and non-reduced $L^2$-Dolbeault cohomology coincide, see \cite[Theorem 3.1.8]{MM}. For a given $0\leq q\leq n$, we say that \textbf{the concentration condition holds in bidegree $(0,q)$ for harmonic forms with values in $F$}, if there exists a compact subset $K\subset X$ and $C>0$ such that 
\begin{equation}
\|s\|^2\leq C\int_K |s|^2 dv_X,
\end{equation}
for $s\in \Ker(\ddbar^F)\cap \Ker(\ddbar^{F*})\cap L^2_{0,q}(X,F)$.
We call $K$ the exceptional compact set of the concentration. Note if the fundamental estimate holds, the concentration condition also.

\subsubsection{\textbf{The convexity of complex manifolds and $\Gamma$-coverings}}
\begin{defn}
	A complex manifold $X$ of dimension $n$ is called $q$-convex if there exists a smooth function $\varrho\in \cC^\infty(X,\R)$ such that the sublevel set $X_c=\{ \varrho<c\}\Subset X$ for all $c\in \R$ and the complex Hessian $\dbar\ddbar\varrho$ has $n-q+1$ positive eigenvalues outside a compact subset $K\subset X$. Here $X_c\Subset X$ means that the closure $\overline{X}_c$ is compact in $X$. We call $\varrho$ an exhaustion function and $K$ exceptional set. $X$ is $q$-complete if $K=\emptyset$ in additional.	
\end{defn}
 
Every compact complex manifold is $q$-convex for all $1\leq q\leq n$. By definition, a compact complex manifold is exactly a $0$-convex manifold. For non-compact manifolds, Greene-Wu \cite[Ch.\,IX.\,(3.5)\,Theorem]{Dem} showed that: 
	Every connected non-compact complex manifold of dimension $n$ is $n$-complete. Moreover, every connected complex manifold of dimension $n$ is $n$-convex. 
Thus, if $X$ is a connected non-compact complex manifold of dimension $n$ and $E$ a holomorphic vector bundle over $X$,
$H^n(X,E)=0$, see \cite{AG:62}.
We denote the $j$-th Dolbeault cohomology with compact supports by $[H^{0,j}(X,E)]_0$, see \cite[(20.8) (20.17)]{HL:88}. Note that if $X$ is compact, $[H^{0,j}(X,E)]_0$ is equal to the usual cohomology. The duality between it and the usual Dolbeault cohomology on $q$-convex manifold of dimension $n$ with $1\leq q\leq n$, is given by
\be\label{dual_convx}
\dim [H^{0,j}(X,E)]_0= \dim H^{0,n-j}(X,E^*\otimes K_X)\leq \infty\quad \mbox{for all}~ 0\leq j\leq n-q.
\ee
If $q=1$, then, moreover, $\dim [H^{0,n}(X,E)]_0=\dim H^0(X,E^*\otimes K_X)$, where $K_X=\wedge^n T^{1,0*}X$. 

Let $M$ be a relatively compact domain with smooth boundary $bM$ in a complex manifold $X$. Let $\rho\in \cC^\infty(X,\R)$ such that $M=\{ x\in X: \rho(x)<0 \}$ and $d\rho\neq 0$ on $bM=\{x\in X: \rho(x)=0\}$. We denote the closure of $M$ by $\overline{M}=M\cup bM$. We say that $\rho$ is a defining function of $M$. Let $T^{(1,0)}bM:=\{ v\in T_x^{(1,0)}X: \dbar\varrho(v)=0 \}$ be the analytic tangent bundle to $bM$ at $x\in bM$. The Levi form of $\rho$ is the $2$-form $\cL_\rho:=\dbar\ddbar\rho\in \cC^\infty(bM, T^{(1,0)*}bM\otimes T^{(0,1)*}bM)$. $M$ is called strongly (resp.\ (weakly)) pseudoconvex if the Levi form $\cL_\rho$ is positive definite (resp.\ semidefinite). Note any strongly pseudoconvex domain is $1$-convex.

	A complex manifold $X$ is called weakly $1$-complete if there exists a smooth plurisubharmonic function $\varphi\in \cC^\infty(X,\R)$ such that $\{x\in X: \varphi(x)<c\}\Subset X$ for any $c\in \R$. $\varphi$ is called an exhaustion function. Note any $1$-convex manifold is weakly $1$-complete.
 
	A Hermitian manifold $(X,\omega)$ is called complete, if all geodesics are defined for all time for the underlying Riemannian manifold. 

\begin{defn}\label{def_covering}
	Let $(X,\omega)$ be a Hermitian manifold of dimension $n$ on which a discrete 
	group $\Gamma$ acts holomorphically, freely and properly such that $\omega$ is a $\Gamma$-invariant Hermitian 
	metric and the quotient $X/\Gamma$ is compact. We say $X$ is a $\Gamma$-covering manifold, see also \cite{Atiyah:76, MM, Wh:16}.
\end{defn}	
 
\subsubsection{\textbf{Kodaira Laplacian with $\ddbar$-Neumann boundary conditions}}

Let $(X,\omega)$ be a Hermitian manifold of dimension $n$ and $(F,h^F)$ be a holomorphic Hermitian vector bundles over $X$. Let $M$ be a relatively compact domain in $X$. Let $\rho$ be a defining function of $M$ satisfying $M=\{ x\in X: \rho(x)<0 \}$ and $|d\rho|=1$ on $bM$, where the pointwise norm $|\cdot|$ is given by $g^{TX}$ associated to $\omega$.

Let $e_{\bn} \in TX$ be the inward pointing unit normal at $bM$ and $e_{\bn}^{(0,1)}$ its projection on $T^{(0,1)}X$. In a local orthonormal frame $\{ w_1,\cdots,\omega_n \}$ of $T^{(1,0)}X$, we have $e_{\bn}^{(0,1)}=-\sum_{j=1}^n w_j(\rho)\ov w_j$. Let $B^{0,q}(X,F):=\{ s\in \Omega^{0,q}(\ov M, F): i_{e_{\bn}^{(0,1)}}  s=0 ~\mbox{on}~bM \}$. Then $B^{0,q}(M,F)=\Dom(\ddbar_H^{F*})\cap \Omega^{0,q}(\overline{M},F)$ and the Hilbert space adjoint $\ddbar_H^{F*}$ of $\ddbar^F$ coincides with the formal adjoint $\ddbar^{F*}$ of $\ddbar^F$ on $B^{0,q}(M,F)$, see \cite[Proposition 1.4.19]{MM}. The operator $\square_N s:=\ddbar^{F}\ddbar^{F*}s+\ddbar^{F*}\ddbar^{F}s$ for $s\in \Dom(\square_N):=\{s\in B^{0,q}(M,F): \ddbar^Fs\in B^{0,q+1}(M,F) \}$. The Friedrichs extension of $\square_N$ is a self-adjoint operator and is called the Kodaira Laplacian with $\ddbar$-Neumann boundary conditions, which coincides with the Gaffney extension of the Kodaira Laplacian, see \cite[Proposition 3.5.2]{MM}. $\Omega^{0,\bullet}(\overline{M},F)$ is dense in $\Dom(\ddbar^F)$ in the graph-norms of $\ddbar^F$, and $B^{0,\bullet}(M,F)$ is dense in $\Dom(\ddbar^{F*}_H)$ and in $\Dom(\ddbar^F)\cap \Dom(\ddbar^{F*}_H)$ in the graph-norms of $\ddbar^{F*}_H$ and $\ddbar^E+\ddbar^{E*}_H$, respectively, see \cite[Lemma 3.5.1]{MM}. Here the graph-norm is defined by $\|s\|+\|Rs\|$ for $s\in \Dom(R)$. 

 \subsection{Nakano $q$-semipositive line bundles and the $\omega$-trace}
  Let $(X,\omega)$ be a Hermitian manifold of dimension $n$ and $(E,h^E)$ a holomorphic Hermitian vector bundle over $X$. Let $\nabla^E$ be the holomorphic Hermitian connection of $(E,h^E)$ and $R^{(E,h^E)}=(\nabla
  ^E)^2$ be the curvature. Let $\bigwedge^{p,q}T^*X:=\bigwedge^p T^{1,0*}X\otimes \bigwedge^q T^{0,1*} X$ and let $\bigwedge^{p,q}T_x^*X$ be the fibre of the bundle $\bigwedge^{p,q}T^*X$ for $x\in X$, and $\Omega^{p,q}(X,E):=\cC^\infty(X,\bigwedge^{p,q}T^*X\otimes E)$ the space of smooth $(p,q)$-forms with values in $E$. We set $\langle,\rangle_h$ the induced pointwise Hermitian metric in the context.
  
  Let $(L,h^L)$ be a holomorphic Hermitian line bundle over $X$. Then $R^L=\ddbar\dbar \log|s|^2_{h^{L}}$ for any local holomorphic frame $s$, and the Chern-Weil form of the first Chern class of $L$ is $c_1(L, h^L)=\frac{\sqrt{-1}}{2\pi}R^L$, which is a real $(1,1)$-form on $X$. We use the notion of positive $(p,p)$-form, see \cite[Chapter III, \S 1, (1.1) (1.2) (1.5) (1.7)]{Dem}. If a $(p,p)$-form $T$ is positive, we write $T\geq 0$.  Let $\Lambda$ be the dual of the operator $\mL:=\omega\wedge\cdot$ on $\Omega^{p,q}(X)$ with respect to the Hermitian inner product $\langle,\rangle_h$ on $X$. In a local orthonormal frame $\{ w_j \}_{j=1}^n$ of $T^{1,0}X$ and its dual $\{ w^j \}$ of $T^{1,0*}X$, $R^{(L,h^L)}=R^{(L,h^L)}(w_i,\ov w_j)w^i\wedge \ov w^j$, $\mL=\sqrt{-1}\sum_{j=1}^n w^j\wedge \ov w^j\wedge \cdot$ and $\Lambda=-\sqrt{-1}\sum_{j=1}^n i_{\ov w^j} i_{w^j}$. For $s\in \Omega^{p,q}(X)$,  $\left\langle[\sqrt{-1}R^{(L,h^L)}, \Lambda]s,s\right\rangle_h\in \cC^\infty(X,\R)$.
 
 Recall the notion of $q$-semipositivity of line bundle in Definition \ref{def_qsemip}, see \cite[(1)]{Ohs:05} and \cite[Section 4]{Siu:82}. By the definition, for $1\leq q\leq n$, $(L,h^L)$ is \textbf{Nakano $q$-semipositive with respect to $\omega$ at $x\in X$}, means that
 \be\label{hypo_semi}
 \left\langle[\sqrt{-1}R^{(L,h^L)},\Lambda]\alpha,\alpha\right\rangle_h \geq 0\quad \mbox{for all}~ \alpha\in \wedge^{n,q}T_x^*X.
 \ee    
 We denoted it by $(\star_q)\geq 0$ at $x$. For $1\leq q\leq n$,
 $(L,h^L)$ is \textbf{Nakano $q$-positive with respect to $\omega$ at $x$}, means that 
 \be\label{hypo_pos}
 \left\langle[\sqrt{-1}R^{(L,h^L)},\Lambda]\alpha,\alpha\right\rangle_h > 0\quad \mbox{for all}~ \alpha\in \wedge^{n,q}T_x^*X\setminus\{0\}.
 \ee   
 We denoted it by $(\star_q)>0$ at $x$. For a subset $Y\subset Y$, $(L,h^L)$ is \textbf{Nakano $q$-semipositive (resp.\ positive) with respect to $\omega$ on $Y$}, if $(\star_q)\geq 0~(\mbox{resp.\ }>0)$ at every point of $Y$.
In a local orthonormal frame $\{\omega_j\}$ of $T^{1,0}X$ around $x$, (\ref{hypo_semi}) is equivalent to
\be \label{D:q-semi-local} 
\left\langle R^{(L,h^L)}(\omega_i,\ov\omega_j)\ov\omega^j\wedge i_{\ov \omega_i}\alpha,\alpha\right\rangle_h\geq 0\quad \mbox{for all}~ \alpha\in \wedge^{0,q}T_x^*X.
\ee
And (\ref{hypo_pos}) can be represented by replacing  $\bigwedge^{n,q}T_x^*X$ and $\geq$ by $\bigwedge^{n,q}T_x^*X\setminus\{0\}$ and $>$ in (\ref{D:q-semi-local}) respectively.

\begin{rem}[notions of the $q$-positivity]\label{D:q-pos-M}
	Note that the notion of Nakano $q$-positive depends on the 
	choice of Hermitian metric $\omega$. On the other hand, 
	the notion of Griffiths $q$-positive is independent of the choice of 
	$\omega$. If $L$ is Nakano $q$-positive at $x$, then $L$ is also Griffiths 
	$q$-positive at $x$. If $L$ is Griffiths $q$-positive at $x$ 
	then there exists a metric $\omega$ such that $L$ is Nakano $q$-positive at $x$ 
	with respect to $\omega$. Actually, if $L$ is Griffiths $q$-positive on $X$ 
	then for any compact set $K$ there exists a Hermitian metric $\omega$ on $X$
	such that $L$ is Nakano $q$-positive on $K$ with respect to $\omega$, 
	see \cite[(3.5.7)]{MM} or \cite[(9)]{M:92} for the construction of $\omega$. 
	The Nakano $1$-semipositivity (resp.\ positivity) coincides with the Griffiths $1$-semipositivity 
	(resp.\ positivity), i.e., the usual semipositivity (resp.\ positivity). By definition, 
	$q$-positivity implies the q-semipositivity.
	 For vector bundles, we refer to \cite{Oh:82,Siu:82} for the definitions of the $q$-positive 
	 in the sense of Nakano and Griffiths. 
\end{rem}

\subsubsection{\textbf{The special case $(\star_1)\geq 0$.}} 

An important special case is $(\star_1)\geq 0$, which is equivalent to $(L,h^L)$ is semipositive as follows.
 \begin{defn}
 	A holomorphic Hermitian line bundle $(L,h^L)$ is semipositive at $x\in X$, if $R^{(L,h^L)}(U,\ov U)\geq 0$ for $U\in T^{1,0}_xX$, denoted by $(L,h^L)_x\geq 0$. For a subset $Y\subset X$, $(L,h^L)$ is semipositive on $Y$ if $(L,h^L)_x\geq 0$ at all $x\in Y$. The definition of $(L,h^L)_x>0$ is analogue. 	
 \end{defn}    
 
 From the definition, $(L,h^L)_x\geq 0$ implies $(\star_q)\geq 0$ at $x$ for all $1\leq q\leq n$. Conversely, $(\star_1)\geq 0$ at $x$ implies $(L,h^L)_x\geq 0$. Thus, $(\star_q)\geq 0$ is a refinement of $(L,h^L)\geq 0$.
 
 \begin{prop}\label{P:semipos}
 	Let $(L,h^L)\geq 0$ (resp.\ $>0$) at $x\in X$. Then, for any Hermitian metric $\omega$ on $X$, $1\leq q \leq n$ and $\alpha\in \bigwedge^{n,q}T_x^*X$ $\left(\mbox{resp.\ } \bigwedge^{n,q}T_x^*X\setminus \{0\}\right)$,
 	\be
 	\left\langle[\sqrt{-1}R^{(L,h^L)},\Lambda]\alpha,\alpha\right\rangle_h \geq 0 ~(\mbox{resp.\ } >0).
 	\ee 
 \end{prop}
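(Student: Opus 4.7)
The plan is to reduce to a pointwise diagonalization of $R^{(L,h^L)}$ with respect to the Hermitian metric $\omega$ and then read off the sign of the curvature term from the standard Nakano identity recorded in the preliminaries.

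First, I would choose an $\omega$-orthonormal basis $\{w_1,\ldots,w_n\}$ of $T_x^{1,0}X$ that simultaneously diagonalizes the Hermitian form $R^{(L,h^L)}_x$; such a basis exists because $\omega_x$ is positive definite and $R^{(L,h^L)}_x$ is a Hermitian form on $T_x^{1,0}X$. In this basis
\begin{equation*}
R^{(L,h^L)}_x \;=\; \sum_{j=1}^{n} a_j\, w^j\wedge \ov w^j,\qquad a_j\in\R.
\end{equation*}
The assumption $(L,h^L)_x\geq 0$ means $R^{(L,h^L)}(U,\ov U)\geq 0$ for every $U\in T_x^{1,0}X$; specializing to $U=w_j$ gives $a_j\geq 0$ for all $j$, and analogously $a_j>0$ under the strict hypothesis.

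Second, I would expand an arbitrary $\alpha\in\bigwedge^{n,q}T_x^*X$ in the same basis as
\begin{equation*}
\alpha \;=\; \sum_{|J|=q} \alpha_J\, w^1\wedge\cdots\wedge w^n\wedge \ov w^J,
\end{equation*}
and compute $\langle[\sqrt{-1}R^{(L,h^L)},\Lambda]\alpha,\alpha\rangle_h$ directly from the local expressions $\Lambda=-\sqrt{-1}\sum_j i_{\ov w^j}i_{w^j}$ and $R^{(L,h^L)}=\sum_j a_j\, w^j\wedge \ov w^j$ given in Section \ref{sec_prel}. Using the anticommutation $i_{\ov w_i}(\ov w^j\wedge\beta)+\ov w^j\wedge i_{\ov w_i}\beta=\delta_{ij}\beta$ together with the fact that $\alpha$ already contains the full holomorphic factor $w^1\wedge\cdots\wedge w^n$ (so creation of any extra $w^i$ annihilates it), the computation collapses to
\begin{equation*}
\bigl\langle[\sqrt{-1}R^{(L,h^L)},\Lambda]\alpha,\alpha\bigr\rangle_h \;=\; \sum_{|J|=q}\Bigl(\sum_{j\in J} a_j\Bigr)|\alpha_J|^2.
\end{equation*}

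Third, I conclude: with $a_j\geq 0$ every coefficient $\sum_{j\in J}a_j$ is non-negative, so the right-hand side is $\geq 0$; in the strict case $a_j>0$ forces $\sum_{j\in J}a_j>0$ for each $J$ of size $q\geq 1$, and since $\alpha\neq 0$ some $\alpha_J\neq 0$, giving strict positivity. There is no real obstacle here—the proposition is a pointwise linear algebra statement—so the only care needed is the simultaneous diagonalization step and the bookkeeping in the $(n,q)$ contraction, which simplifies precisely because the bidegree $(n,q)$ leaves no room to create additional holomorphic indices.
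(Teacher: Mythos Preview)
Your proof is correct and follows essentially the same approach as the paper: diagonalize $R^{(L,h^L)}_x$ in an $\omega$-orthonormal basis, expand $\alpha\in\bigwedge^{n,q}T_x^*X$, and obtain $\langle[\sqrt{-1}R^{(L,h^L)},\Lambda]\alpha,\alpha\rangle_h=\sum_{|J|=q}\bigl(\sum_{j\in J}a_j\bigr)|\alpha_J|^2$, from which the sign conclusions are immediate. Your write-up is in fact slightly more explicit than the paper's about why the $(n,q)$ bidegree makes the commutator reduce to the displayed sum.
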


\begin{proof}
	Let $\{\omega_j \}$ be an orthonormal frame around $x$ such that $\sqrt{-1}R^{(L,h^L)}_x=\sqrt{-1}c_j(x)\omega^j\wedge\ov \omega^j$. Let $C_J(x):=\sum_{j\in J}c_j(x)$ for each ordered $J=(j_1,\cdots,j_q)$ with $|J|= q$. Let $\alpha\in \bigwedge^{n,q}T_x^*X$ and $\alpha=\sum _Jf_{NJ}(x) w^{N}\wedge\ov w^J$, $N=(1,\cdots,n)$ and $|J|=q$, 
	\begin{eqnarray}\label{eq_comp_starq}
	\left\langle[\sqrt{-1}R^{(L,h^L)},\Lambda]\alpha,\alpha\right\rangle_h(x)
	=\sum_J C_J(x) |f_{NJ}(x)|^2.
	\end{eqnarray}
	Since $(L,h^L)_x\geq 0$, $c_j(x)\geq 0$ for all $1\leq j\leq n$ and thus $C_J(x)\geq 0$ for all $1\leq |J|\leq n$.	And the positive case follows similarly.
		\end{proof} 
 
 \begin{prop}\label{P:1semipos}
 	$(L,h^L)_x\geq 0$ if and only if $(\star_1)\geq 0$ at $x$.
 \end{prop}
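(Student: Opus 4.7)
The plan is to prove the two directions separately, with the forward implication being essentially immediate from the preceding proposition and the reverse direction being the substantive one.

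For the forward direction, suppose $(L,h^L)_x \geq 0$. Applying Proposition \ref{P:semipos} in the special case $q=1$ immediately yields $\langle [\sqrt{-1}R^{(L,h^L)},\Lambda]\alpha,\alpha\rangle_h \geq 0$ for all $\alpha \in \bigwedge^{n,1}T_x^*X$, which is exactly $(\star_1) \geq 0$ at $x$.

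For the reverse direction, I would reuse the diagonalisation already set up in the proof of Proposition \ref{P:semipos}. Choose a local orthonormal frame $\{\omega_j\}$ of $T^{1,0}X$ around $x$ in which $\sqrt{-1}R^{(L,h^L)}_x = \sqrt{-1}\sum_j c_j(x)\,\omega^j \wedge \bar\omega^j$; this is possible because $\sqrt{-1}R^{(L,h^L)}_x$ is a real $(1,1)$-form and hence Hermitian with respect to $\omega$. Semipositivity $(L,h^L)_x \geq 0$ is then equivalent to $c_j(x) \geq 0$ for all $j = 1,\ldots,n$. To extract this from the hypothesis, for each fixed index $k$ test the $(n,1)$-form $\alpha_k := \omega^N \wedge \bar\omega^k$ with $N=(1,\ldots,n)$. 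Substituting $\alpha_k$ into the formula \eqref{eq_comp_starq} of the previous proof (with $q=1$, so $J = \{k\}$ and $C_J(x) = c_k(x)$) gives
\begin{equation}
\left\langle[\sqrt{-1}R^{(L,h^L)},\Lambda]\alpha_k,\alpha_k\right\rangle_h(x) = c_k(x).
\end{equation}
The hypothesis $(\star_1) \geq 0$ at $x$ therefore forces $c_k(x) \geq 0$ for every $k$, whence $R^{(L,h^L)}(U,\bar U) \geq 0$ for all $U \in T^{1,0}_x X$.

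There is essentially no obstacle here: the content of the statement is that the bidegree $(n,1)$ test condition already sees every eigenvalue of the curvature individually (as opposed to $q \geq 2$, where only sums of $q$ eigenvalues are controlled). Once the diagonalising frame is in place, the computation from \eqref{eq_comp_starq} reduces the problem to a one-line check. The only point deserving care is to note that the choice of diagonalising frame and the identification of $c_j(x)$ as the curvature eigenvalues with respect to $\omega$ is independent of the test forms chosen, so the argument is valid at any individual point $x$.
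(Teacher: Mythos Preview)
Your proof is correct. The forward direction via Proposition~\ref{P:semipos} matches the paper (which records this implication just before the proposition), and your reverse direction is valid: diagonalising the curvature and testing with $\alpha_k=\omega^N\wedge\bar\omega^k$ extracts each eigenvalue $c_k(x)$ via \eqref{eq_comp_starq}.

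The paper's own argument for the reverse direction is a small variant: rather than diagonalising, it works in an arbitrary orthonormal frame, takes any $U=\sum_k u_k\omega_k\in T^{1,0}_xX$, sets $\alpha=\sum_k u_k\bar\omega^k\in T^{0,1*}_xX$, and checks the identity
\[
R^{(L,h^L)}(U,\bar U)=\sum_{i,j}R^L(\omega_i,\bar\omega_j)u_i\bar u_j
=\left\langle R^{L}(\omega_i,\bar\omega_j)\bar\omega^j\wedge i_{\bar\omega_i}\alpha,\alpha\right\rangle_h,
\]
which is nonnegative by the equivalent formulation \eqref{D:q-semi-local} of $(\star_1)\geq 0$. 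The difference is purely cosmetic: your approach trades the bijection $U\leftrightarrow\alpha$ for a preliminary diagonalisation and then tests on basis forms. The paper's route avoids invoking \eqref{eq_comp_starq} and diagonalisability, giving a one-line identity valid in any frame; yours makes the role of the individual eigenvalues explicit and reuses the computation already done. Either way the content is the same.
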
 
 \begin{proof}
 	Suppose $(\star_1)\geq 0$ at $x$.
 	Let $U\in T^{1,0}_xX$ with $U=\sum_{k=1}^n u_k \omega_k$ in a local orthonormal frame $\{\omega_j \}_{j=1}^n$ of $T^{1,0}X$ around $x$. We set $\alpha=u_k\ov \omega^k\in T^{0,1*}_xX$, and then
 	$
 	R^{(L,h^L)}(U,\ov U)=\ov u_j R^L(\omega_i,\ov \omega_j)u_i=\langle R^{L}(\omega_i,\ov\omega_j)\ov\omega^j\wedge i_{\ov \omega_i}\alpha,\alpha\rangle_h\geq 0.
 	$
 \end{proof}
 
The general relation among $(\star_q)\geq 0$, $1\leq q\leq n$, is as follows.   
\begin{prop}\label{prop_refine_semip}
	If $(\star_q)\geq 0$ at $x$, then $(\star_{q+1})\geq 0$ at $x$.
\end{prop}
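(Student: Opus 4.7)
The plan is to reduce the inequality to an elementary combinatorial identity about sums of eigenvalues of the curvature. Choose a local orthonormal frame $\{\omega_j\}$ of $T^{1,0}X$ around $x$ diagonalizing $\sqrt{-1}R^{(L,h^L)}_x = \sqrt{-1}\sum_j c_j(x)\,\omega^j\wedge\bar\omega^j$, and let $C_J(x) := \sum_{j\in J} c_j(x)$ for any ordered index set $J$. Using the formula (\ref{eq_comp_starq}), which is purely a local identity at $x$ and does not require the sign hypothesis on $(L,h^L)$, one sees that testing $\langle[\sqrt{-1}R^{(L,h^L)},\Lambda]\alpha,\alpha\rangle_h(x)$ against a single basis element $\alpha = w^N\wedge\bar w^J$ with $|J|=q$ recovers $C_J(x)$. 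Hence $(\star_q)\geq 0$ at $x$ is equivalent to $C_J(x)\geq 0$ for every $J$ with $|J|=q$, and similarly for $(\star_{q+1})$ with subsets of size $q+1$; this matches the eigenvalue-sum formulation given in Definition~\ref{def_qsemip}(A).

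Next I would fix an index set $J'$ with $|J'|=q+1$ and sum the hypothesized inequalities $C_J(x)\geq 0$ over the $\binom{q+1}{q}=q+1$ size-$q$ subsets $J\subset J'$. Since each eigenvalue $c_j(x)$ with $j\in J'$ appears in exactly $\binom{q}{q-1}=q$ such subsets, counting gives
\begin{equation*}
\sum_{\substack{J\subset J'\\ |J|=q}} C_J(x) \;=\; q\cdot C_{J'}(x).
\end{equation*}
The left-hand side is a sum of nonnegative terms by hypothesis, so $C_{J'}(x)\geq 0$. As $J'$ was arbitrary among $(q+1)$-subsets, this establishes $(\star_{q+1})\geq 0$ at $x$.

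There is no real obstacle here; the only point worth checking carefully is the equivalence between the pointwise Hermitian formulation (\ref{hypo_semi}) and the eigenvalue-sum formulation used above, which is exactly what (\ref{eq_comp_starq}) supplies once the curvature is diagonalized. Everything else is a one-line counting argument, and the same reasoning with strict inequality would also yield the corresponding statement $(\star_q)>0 \Rightarrow (\star_{q+1})>0$.
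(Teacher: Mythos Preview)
Your proof is correct and follows essentially the same approach as the paper: both diagonalize the curvature, use \eqref{eq_comp_starq} to reduce $(\star_q)\geq 0$ to the condition $C_J(x)\geq 0$ for all $|J|=q$, and then observe the combinatorial identity $\sum_{J\subset J',\,|J|=q} C_J(x)=q\,C_{J'}(x)$ to pass from size-$q$ to size-$(q+1)$ subsets. Your write-up is slightly more explicit about why the equivalence with the eigenvalue-sum formulation holds, but the argument is the same.
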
  
\begin{proof}
	From $(\star_q)\geq 0$ at $x$ and (\ref{eq_comp_starq}), $C_J(x)\geq 0$ for each ordered $|J|=q$. Let $C_K(x):=\sum_{k\in K}c_k(x)$ for each ordered $|K|=q+1$. Then  $C_K(x)=\frac{1}{q}\sum_{|J|=q, J\subset K}C_J(x)\geq 0$, and thus $(\star_{q+1})\geq 0$ at $x$ by (\ref{eq_comp_starq}).
\end{proof}

\begin{rem}\label{rem_j_pos}
	Clearly, the positive case $(>)$ of Proposition \ref{P:1semipos} and \ref{prop_refine_semip} also hold.
\end{rem}

\subsubsection{\textbf{The special case $(\star_n)\geq 0$.}}
 
Another interesting case is $(\star_n)\geq 0$, which is equivalent to the $\omega$-trace of Chern curvature tensor $R^{(L,h^L)}$ is non-negative as follows.  
 \begin{defn}
 	The $\omega$-trace of Chern curvature tensor $R^{(L,h^L)}$, 
$\tau(L,h^L,\omega)\in \cC^\infty(X,\R)$, is defined by
$\sqrt{-1}R^{(L,h^L)}\wedge\omega_{n-1}=\tau(L,h^L,\omega)\omega_n$.
\end{defn}
 Equivalently, let $\{ w_j \}_{j=1}^n$ be a local orthonormal frame of $T^{(1,0)}X$ with respect to $\omega$ and $\{w^j \}$ the dual frame of $T^{(1,0)*}X$,  \be\nonumber
 	\tau(L,h^L, \omega):=
 	\mbox{Tr}_\omega R^{(L,h^L)}=\sum_{j=1}^n R^{(L,h^L)}(w_j,\ov{w}_j)=\sum_{i,k} R^{(L,h^L)}\left(\frac{\dbar}{\dbar z_i},\frac{\dbar}{\dbar \ov z_k}\right)\langle dz_i,d\ov z_k\rangle_{g^{T^*X}}.
 	\ee 
We say the $\omega$-trace of Chern curvature tensor $R^{(L,h^L)}$ is semipositive (resp.\ positve) if $\tau(L,h^L,\omega)\geq 0~(\mbox{resp.\ }>0)$. 
From (\ref{eq_comp_starq}), it follows immediately: 
 \begin{prop}\label{prop_tr_pos} We have the following
 		\begin{itemize}
 			\item[(1)]
 			$\tau(L,h^L,\omega)|\alpha|^2_h=\left\langle[\sqrt{-1}R^{(L,h^L)},\Lambda]\alpha,\alpha\right\rangle_h$\quad for all $\alpha\in \wedge^{n,n}T_x^*X$, $ x\in X$.
 			\item[(2)] 
 			$\tau(L,h^L,\omega)_x\geq 0$ if and only if $(\star_n)\geq 0$ at $x$.
 			\item[(3)]
 			$\tau(L,h^L,\omega)_x> 0$ if and only if $(\star_n)> 0$ at $x$.
 			\item[(4)] $\tau(L,h^L,\omega)=-\tau(L^*,h^{L^*},\omega)$.
 		\end{itemize}
 \end{prop}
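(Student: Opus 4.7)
The plan is to reduce everything to the local eigenvalue computation already done in equation (\ref{eq_comp_starq}). First I would diagonalize: pick a local orthonormal frame $\{w_j\}$ of $T^{1,0}X$ at $x$ with respect to $\omega$ such that $\sqrt{-1}R^{(L,h^L)}_x = \sqrt{-1}\sum_{j=1}^n c_j(x)\, w^j\wedge \ov w^j$. In this frame, $\tau(L,h^L,\omega)(x) = \sum_j R^{(L,h^L)}(w_j,\ov w_j) = \sum_j c_j(x) = C_N(x)$, where $N=(1,\dots,n)$. For (1), observe that $\bigwedge^{n,n}T_x^*X$ is one-dimensional and spanned by $w^N\wedge \ov w^N$, so any $\alpha\in \bigwedge^{n,n}T_x^*X$ is of the form $\alpha=f_{NN}(x)\,w^N\wedge \ov w^N$ with $|\alpha|^2_h = |f_{NN}(x)|^2$. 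Applying (\ref{eq_comp_starq}) with $q=n$ (there is only one admissible ordered $J=N$) yields
\begin{equation*}
\left\langle[\sqrt{-1}R^{(L,h^L)},\Lambda]\alpha,\alpha\right\rangle_h(x) = C_N(x)|f_{NN}(x)|^2 = \tau(L,h^L,\omega)(x)|\alpha|^2_h,
\end{equation*}
which is (1).

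For (2) and (3), these follow immediately from (1) once one notes that the defining inequality $(\star_n)\ge 0$ (respectively $>0$) in (\ref{hypo_semi}) and (\ref{hypo_pos}) quantifies over all $\alpha\in \bigwedge^{n,n}T_x^*X$ (respectively the nonzero ones), and by (1) this left-hand side is just $\tau(L,h^L,\omega)(x)\,|\alpha|^2_h$. Since $|\alpha|^2_h$ ranges over all nonnegative (respectively strictly positive) real values as $\alpha$ varies, the inequality for all such $\alpha$ is equivalent to the corresponding sign condition on $\tau(L,h^L,\omega)(x)$.

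For (4), the Chern curvature of the dual line bundle satisfies $R^{(L^*,h^{L^*})} = -R^{(L,h^L)}$ (this is immediate: if $s$ is a local holomorphic frame of $L$, then $s^*$ is a frame of $L^*$ with $|s^*|^2_{h^{L^*}} = |s|^{-2}_{h^L}$, so $R^{(L^*,h^{L^*})} = \ov\partial\partial \log |s^*|^2_{h^{L^*}} = -\ov\partial\partial\log|s|^2_{h^L}$). Wedging with $\omega_{n-1}$ and comparing with $\omega_n$ then gives $\tau(L^*,h^{L^*},\omega) = -\tau(L,h^L,\omega)$. There is essentially no obstacle here; the only thing to be careful about is keeping the one-dimensionality of $\bigwedge^{n,n}T_x^*X$ in mind, which is what forces the equivalence in (2) and (3) to be an equality rather than merely an implication.
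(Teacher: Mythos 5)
Your proposal is correct and follows essentially the same route as the paper, which derives the proposition directly from the eigenvalue computation in (\ref{eq_comp_starq}) applied with $q=n$ (where the only admissible multi-index is $J=N$), together with the standard fact $R^{(L^*,h^{L^*})}=-R^{(L,h^L)}$ for part (4). Your additional remark that the one-dimensionality of $\bigwedge^{n,n}T_x^*X$ is what upgrades the implications in (2) and (3) to equivalences is a correct and worthwhile clarification of the paper's ``it follows immediately.''
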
 
 
 \begin{exam}
 	Let $(X,\omega)$ be a K\"{a}hler manifold of dimension $n$, let $(K^*_X,h_\omega)$ be the dual of canonical line bundle $K_X:=\wedge^n T^{(1,0)*}X$ associated with the Hermitian metric $h_\omega$ induced from $\omega$.
 	The $\omega$-trace of $R^{(K_X^*,h_\omega)}$ coincides with the scalar curvature $r^X_\omega$ of $(X,\omega)$ up to the mutiplication of 2, i.e., for $r^X_\omega:=2\sum_jRic(\omega_j,\ov\omega_j)$,
 	\be
 	2\tau(K^*_X,h_\omega,\omega)=2\mbox{Tr}_\omega R^{(K_X^*,h_\omega)}=r_\omega^X.
 	\ee  
 \end{exam}
 
\subsubsection{\textbf{The $\omega$-trace of Chern curvature tensor of vector bundles.}} 
 Let $(E,h^E)$ be a holomorphic Hermitian vector bundle over a complex manifold $(X,\omega)$. 
The $\omega$-trace of Chern curvature tensor $R^{(E,h^E)}$, 
$\tau(E,h^E,\omega):=\mbox{Tr}_\omega R^{(E,h^E)}\in\cC^\infty(X,\End(E))$ is defined by
 	\be 
 	\sqrt{-1}R^{(E,h^E)}\wedge\omega_{n-1}=\tau(E,h^E,\omega)\omega_n,
 	\ee
 	see \cite[Section 1.5.]{CDP:15}. Note in \cite[(4.15)]{MM} $\Lambda_\omega(R^E)$ is the contraction of $R^E$ with respect to $\omega$ and thus $\sqrt{-1}\Lambda_\omega(R^E)=\tau(E,h^E,\omega)$ in our notations. We define  
 	$\tau(E,h^E,\omega)\geq 0$ (resp.\ $>0$) at $x\in X$ by
 	\be
 	\langle\tau(E,h^E,\omega)s,s\rangle_{h^E}\geq 0
 	\ee
 	(resp.\ $>0$) for $s\in E_x$ (resp.\ $s\in E_x\setminus \{0\}$). Similarly, we can define $\tau(E,h^E,\omega)\leq 0$ (resp.\ $<0$).  
  Let $(E^*,h^{E^*})$ be the dual bundle with the induced metric given by $(E,h^E)$, 
 \be
 \quad \tau(E,h^E,\omega)=-\tau(E^*,h^{E^*},\omega)
 \ee
 coincide as Hermitian matrices, see \cite{Kob:87}.
For the projection $\pi:P(E^*)\rightarrow X$ and the dual of tautological line bundle $O_{E^*}(1):=(L(E^*))^*$ over $P(E^*)$ with  Hermitian metrics $\omega_{P(E^*)}$ and $h^{O_{E^*}(1)}$ induced from $\omega$ and $h^E$, see \cite[Ch.\,III, Sec.\,5]{Kob:87}, we set 
\begin{equation}
\tau(O_{E^*}(1)):=\tau\left(O_{E^*}(1),h^{O_{E^*}(1)},\omega_{P(E^*)}\right).
\end{equation}

    \section{Bergman density function and applications}\label{sec_pf_local}
    \subsection{Local estimates for Bergman density functions}
    Let $(X,\omega)$ be a Hermitian (paracompact) manifold of dimension $n$ and
    $(L,h^L)$ and $(E,h^E)$ be Hermitian holomorphic line bundles over $X$.
    For $k\in\N$ we form the Hermitian line bundles $L^k:=L^{\otimes k}$ and
    $L^{k}\otimes E$, the latter
    endowed with the metric $h_k=(h^L)^{\otimes k}\otimes h^E$. Let $\nabla^L$ be the holomoprhic Hermitian connection of $(L,h^L)$.  The curvature of $(L, h^L)$ is defined by $R^L=(\nabla^L)^2$, then the Chern-Weil form of the first Chern class of $L$ is $c_1(L, h^L)=\frac{\sqrt{-1}}{2\pi}R^L$, which is a real $(1,1)$-form on $X$. 
     
    Let $dv_X:=\frac{\omega^n}{n!}$ be the volume form on $X$. We denote that the maximal extension of the Dolbeault operator $\ddbar^E_k:=(\ddbar^{L^k\otimes E})_{\max}$, its Hilbert space adjoint $\ddbar^{E*}_k:=(\ddbar^{L^k\otimes E})^*_H$, and the Gaffney extension of Kodaira Laplacian $\square^E_k:=\square^{L^k\otimes E}$. Let $\cH^{o,q}(X,L^k \otimes E):=\Ker(\square^E_k)\cap L^2_{0,q}(X,L^k \otimes E)$ be the space of harmonic $(0,q)$-forms with values in $L^k \otimes E$ on $X$. For simplifying the notations, sometimes we will denote $\ddbar$, $\ddbar^*$ and $\square$. For forms with values in $L^k\otimes E$, we denote the Hermitian norm $|\cdot|:=|\cdot|_{h_k,\omega}$  induced by $\omega,h^L, h^E$ and the $L^2$-inner product $\|\cdot\|:=\|\cdot\|_{L_{0,q}^2(X,L^k\otimes E)}$ for each $q\in \N$.  Let $\{ w_j \}_{j=1}^n$ be a local orthonormal frame of $T^{(1,0)}X$ with respect to $\omega$ with dual frame $\{w^j \}$ of $T^{(1,0)*}X$.
    
  Let 
  $\cH^{0,q}(X,L^k\otimes E)$
  be the space of harmonic $(0,q)$-forms with values in $L^k\otimes E$. Let $\{s^k_j\}_{j\geq1}$ be an orthonormal basis and denote by $B_k^q$
  the Bergman density function defined by
  \begin{equation}\label{e:Bergfcn}
  B_k^q(x)=\sum_{j\geq 1}|s^k_j(x)|_{h_k,\omega}^2\,,
  \;x\in X,
  \end{equation}
  where $|\cdot|_{h_k,\omega}$ is the pointwise norm of a form. The function \eqref{e:Bergfcn} 
  is well-defined by an adaptation of 
  \cite[Lemma 3.1]{CM} to form case.  
  
  We follow the notations in \cite[Section 3.2]{Wh:16} and show the submeanvalue formulas of harmonic forms in $\cH^{n,q}(X,L^k\otimes E)$.
  Let $(L,h^L)$ and $(E,h^E)$ be Hermitian holomorphic line bundles over $X$. For any compact subset $K$ in $X$, the interior of $K$ is denoted by $\mathring{K}$. Let $K_1, K_2$ be compact subsets in $X$, such that $K_1\subset \mathring{K_2}$. Then there exists a constant $c_0=c_0(\omega, K_1, K_2)>0$ such that for any $x_0\in K_1$, the holomorphic normal coordinate around $x_0$ is $V\cong W\subset \C^n$, where $$W:=B(c_0):=\{ z\in\C^n: |z|<c_0\}, \quad V:=B(x_0,c_0)\subset \mathring{K_2}\subset K_2,$$ $z(x_0)=0$, and
  $\omega(z)= \sqrt{-1} \sum_{i,j} h_{ij}(z)d{z_i} \wedge d{\overline{z}_j}$ with $h_{ij}(0)=\frac{1}{2}\delta_{ij}$.
   \begin{lemma}\label{keylemma}
   	Let $(X,\omega)$ be a Hermitian manifold of dimension $n$ and
   	$(L,h^L)$ and $(E,h^E)$ be holomorphic Hermitian line bundles over $X$. 
   	Let $K_1$ and $K_2$ be compact subsets in $X$ such that $K_1\subset \mathring{K_2}$. Let $1\leq q\leq n$. Assume $(L,h^L)$ satisfies (\ref{hypo_semi}) for $ x\in\mathring{K_2}$. 
   	Then,
   	\begin{itemize}
   		\item[(1)]
   		there exists a constant $C>0$ such that
   		\begin{equation}\label{eq4} 
   		\int_{|z|<r}|\alpha|_{h_k,\omega}^2 dv_X \leq Cr^{2q}\int_{X}|\alpha|_{h_k,\omega}^2 dv_X
   		\end{equation}
   		for any $\alpha\in \cH^{n,q}(X,L^k\otimes E)$ and $0<r<\frac{C_0}{2^n}$;
   		\item[(2)]
   		there exists a constant $C>0$ such that 
   		\begin{equation}\label{eq161}
   		|\alpha(x_0)|_{h_k,\omega}^2\leq Ck^n \int_{|z|<\frac{2}{\sqrt{k}}}|\alpha|_{h_k,\omega}^2 dv_X
   		\end{equation}
   		for any $x_0\in K_1$, $\alpha\in\cH^{n,q}(X,L^k\otimes E)$ and $k$ sufficiently large, 
   	\end{itemize}
   	where $|\cdot|_{h_{k},\omega}^2$ is the pointwise Hermitian norm induced by $\omega$, $h^L$ and $h^E$.
   \end{lemma}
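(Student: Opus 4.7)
My plan is to work in holomorphic normal coordinates around a point of $K_1$ and exploit the Nakano $q$-semipositivity via the Bochner-Kodaira-Nakano identity, following the scheme of \cite{BB:02} as adapted to the Hermitian setting in \cite[Section 3.2]{Wh:16}.

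First, fix $x_0 \in K_1$ and use the holomorphic normal coordinate chart $V \cong W \subset \C^n$ provided in the paragraph preceding the lemma. Trivialize $L$ and $E$ over $V$ by local holomorphic frames $e_L, e_E$ so that $h^L(e_L,e_L) = e^{-\varphi_L}$, $h^E(e_E,e_E) = e^{-\varphi_E}$ for smooth real $\varphi_L, \varphi_E$, normalized so that $\varphi_L(0) = 0$ and $d\varphi_L(0) = 0$. Any $\alpha \in \cH^{n,q}(X, L^k \otimes E)$ then reads on $V$ as $\tilde\alpha \otimes e_L^{\otimes k} \otimes e_E$ with $|\alpha|^2_{h_k,\omega} = |\tilde\alpha|^2_\omega e^{-k\varphi_L - \varphi_E}$; the harmonic equations translate to $\ddbar \tilde\alpha = 0$ together with a weighted $\ddbar^*$-equation with weight $\varphi_k := k\varphi_L + \varphi_E$.

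For part (1), I would apply the Bochner-Kodaira-Nakano identity to $\chi_r \alpha$ for a family of smooth cutoffs $\chi_r$ adapted to the balls $\{|z|<r\}$. The hypothesis $(\star_q) \geq 0$ on $\mathring{K}_2$ makes the curvature term $\langle[\sqrt{-1}(kR^L + R^E), \Lambda]\tilde\alpha, \tilde\alpha\rangle$ nonnegative up to a $k$-independent error from $R^E$ and from the torsion of $\omega$. Since $\alpha$ is harmonic the $\square$-term collapses, producing an integral identity in which the only uncontrolled contributions live on the annular support of $\nabla \chi_r$. An iteration in $r$ --- a dyadic doubling argument in the spirit of Berndtsson's local $L^2$ estimate for harmonic $(n,q)$-forms --- then upgrades the naive $O(r^{2n})$ volume bound to the sharp $O(r^{2q})$ bound. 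This encodes the model fact that, in the Bargmann-Fock picture on $(\C^n, kR^L_{x_0})$, the ground-state harmonic $(n,q)$-forms vanish to order $q$ at the origin. Uniformity in $x_0 \in K_1$ follows from the smoothness of $\omega, h^L, h^E$ together with $K_1 \Subset \mathring{K}_2$.

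For part (2), rescale $z = w/\sqrt{k}$; the ball $\{|z| < 2/\sqrt{k}\}$ becomes $\{|w|<2\}$, the weight $k\varphi_L(w/\sqrt{k})$ tends to the Bargmann weight $\langle R^{L}_{x_0} w, w\rangle$ with $O(k^{-1/2})$ error, and $\tilde\alpha(w/\sqrt{k})$ solves a first-order elliptic system with coefficients uniformly bounded in $k$ on $\{|w|<2\}$. A standard interior G\aa rding/elliptic estimate for this system gives $|\tilde\alpha(0)|^2 \leq C \int_{|w|<2}|\tilde\alpha(w/\sqrt{k})|^2 e^{-\varphi_k(w/\sqrt{k})} dv_{\C^n}(w)$; undoing the change of variables contributes the Jacobian factor $k^n$ in the volume form and returns (\ref{eq161}). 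The delicate step is the $r^{2q}$ decay in part (1): mere semipositivity $(L, h^L) \geq 0$ would only give $O(r^{2})$ via standard submean inequalities, and extracting the exponent $2q$ really requires that harmonic $(n,q)$-forms detect the $q$-dimensional kernel structure allowed by $(\star_q) \geq 0$. Rigorously matching the curved situation to the flat Bargmann-Fock model --- while keeping constants uniform in $x_0 \in K_1$, in the torsion of $\omega$ and the $R^E$-term, and independent of $k$ --- is the heart of the argument.
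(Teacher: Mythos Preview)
Your outline follows the right references but misidentifies the mechanism behind the exponent $r^{2q}$ in part (1), and the last paragraph contains a genuine misconception. The paper's argument (a pointer to \cite[Lemmas 3.4--3.5]{Wh:16}, modelled on \cite{BB:02}) does not apply Bochner--Kodaira--Nakano to $\chi_r\alpha$. One instead associates to $\alpha\in\cH^{n,q}(X,L^k\otimes E)$ an auxiliary positive $(n-q,n-q)$-form $T_\alpha$ with $T_\alpha\wedge\omega_q=c_q|\alpha|_h^2\,\omega_n$, and a Siu-type $\dbar\ddbar$-computation yields
\[
\sqrt{-1}\,\dbar\ddbar(T_\alpha\wedge\omega_{q-1})\;\geq\;-C_4\,|\alpha|_h^2\,\omega_n\qquad\text{on }\mathring K_2,
\]
the curvature hypothesis entering only through the sign of $c_1(L,h^L)\wedge T_\alpha\wedge\omega_{q-1}=(2\pi)^{-1}\langle[\sqrt{-1}R^L,\Lambda]\alpha,\alpha\rangle_h\,\omega_n$. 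Integrating this ``almost-plurisubharmonic'' inequality over balls gives a monotonicity statement for $r\mapsto r^{-2q}\int_{|z|<r}|\alpha|^2\,dv_X$; the exponent $2q$ is forced by the bidegree $(n-q,n-q)$ of $T_\alpha$, not by any dyadic iteration. By contrast, BKN applied to $\chi_r\alpha$ with $\alpha$ harmonic and the curvature term dropped as nonnegative produces only an inequality of the shape $\int_{r<|z|<2r}|\alpha|^2\geq -Cr^2\int_{|z|<2r}|\alpha|^2$, which carries no information and cannot be iterated to gain any power of $r$.

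Your claim that ``mere semipositivity $(L,h^L)\geq 0$ would only give $O(r^2)$'' inverts the logic. Semipositivity is exactly Nakano $1$-semipositivity, and by Proposition \ref{prop_refine_semip} it implies $(\star_q)\geq 0$ for \emph{every} $q$; this is why \cite{Wh:16} already obtains $O(r^{2q})$ for all $1\leq q\leq n$ under that single hypothesis. The contribution of the present lemma is the opposite observation: for a \emph{fixed} $q$ one needs only the weaker condition $(\star_q)\geq 0$ to make the one curvature term above nonnegative, and the rest of the proof in \cite{Wh:16} is unchanged. The exponent $2q$ is governed by the form degree of $\alpha$, not by any ``$q$-dimensional kernel structure'' of $R^L$. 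Your sketch of part (2) via the rescaling $z=w/\sqrt{k}$ and interior elliptic estimates is correct in spirit and matches \cite[Lemma 3.5]{Wh:16}.
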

  \begin{proof}
  	In \cite[Lemma 3.4, 3.5]{Wh:16} the assertion was proved for all $1\leq q\leq n$ for a semipositive line bundle on $\mathring{K_2}$. However, in order to prove the assertion for a fixed $q$, it is enough to assume $(L,h^L)$ is Nakano $q$-semipositive, i.e., it satisfies (\ref{hypo_semi}) for $ x\in\mathring{K_2}$. 
  	Indeed, if $(L,h^L)$ satisfies (\ref{hypo_semi}) for $ x\in\mathring{K_2}$, we have
  	\be
  	c_1(L, h^L)\wedge T_{\alpha}\wedge \omega_{q-1}=(2\pi)^{-1}\langle[\sqrt{-1} R^L, \Lambda] \alpha, \alpha \rangle_h \omega_n\geq 0
  	\ee
  	on $\mathring{K_2}$. Thus, the inequality in
  	\cite[(3.11)]{Wh:16},
  $	i\dbar\ddbar(T_{\alpha}\wedge \omega_{q-1})
  	\geq -C_4|\alpha|_h^{2}\omega_n$,
  	 still holds for $\alpha\in \cH^{n,q}(X,L^k\otimes E)$ and the rest part of the proof is unchanged. Thus this sub-meanvalue proposition analogue to \cite[Lemma 3.4, Lemma 3.5]{Wh:16} follows.
  \end{proof}
  
 Analogue to \cite{BB:02,Wh:16,Wh:17}, we obtain a local estimates for the Bergman density functions as follows.  
  \begin{thm}\label{Localmain}
  	Let $(X,\omega)$ be a Hermitian manifold of dimension $n$ and let
  	$(L,h^L)$ and $(E,h^E)$ be holomorphic Hermitian line bundles over $X$, and $1\leq q\leq n$. 
  	Let $K\subset X$ be a compact subset and $(L,h^L)$ is Nakano $q$-semipositive with respect to $\omega$ on a neighborhood of $K$.
  	Then there exists  $C>0$ depending on $K$, $\omega$, $(L,h^L)$ and $(E,h^E)$, such that
  	\begin{equation}
  		B^q_k(x) \leq Ck^{n-q}\quad \mbox{for all}~ x\in K, k\geq 1,
  	\end{equation}
  	where $B^q_k(x)$ is defined by \eqref{e:Bergfcn} for harmonic 
  	$(0,q)$-forms with values in $L^k\otimes E$. 
  \end{thm}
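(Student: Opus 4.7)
The plan is to deduce the bound directly from Lemma \ref{keylemma} by combining its two halves at the critical scale $r = 2/\sqrt{k}$, after a standard reduction to $(n,q)$-forms. First I would set $\tilde E := E \otimes K_X^*$ with the Hermitian metric induced by $h^E$ and the metric on $K_X^*$ coming from $\omega$. The canonical pointwise isomorphism $\wedge^{0,q}T^*X \otimes L^k \otimes E \cong \wedge^{n,q}T^*X \otimes L^k \otimes \tilde E$ is an isometry that commutes with $\ddbar$, hence induces a norm-preserving isomorphism $\cH^{0,q}(X, L^k \otimes E) \cong \cH^{n,q}(X, L^k \otimes \tilde E)$ under which pointwise norms agree. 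Therefore the Bergman density functions on the two sides coincide. Crucially, the Nakano $q$-semipositivity hypothesis involves only $L$ and is unaffected by the replacement $E \mapsto \tilde E$, so Lemma \ref{keylemma} applies with $E$ replaced by $\tilde E$.

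Second, I would choose a compact $K_2$ with $K \subset \mathring K_2$ and $(L,h^L)$ Nakano $q$-semipositive on $\mathring K_2$, and use the extremal characterization of the Bergman kernel: for each $x_0 \in K$,
\[
B^q_k(x_0) \leq \binom{n}{q} \sup\Bigl\{|\alpha(x_0)|^2_{h_k,\omega} : \alpha \in \cH^{n,q}(X, L^k \otimes \tilde E),\ \|\alpha\| \leq 1\Bigr\},
\]
where $\binom{n}{q}$ is the ($k$-independent) rank of the fiber of $\wedge^{n,q}T^*X \otimes L^k \otimes \tilde E$. For such an $\alpha$ and $k$ sufficiently large, the pointwise bound \eqref{eq161} gives $|\alpha(x_0)|^2_{h_k,\omega} \leq C_1 k^n \int_{|z|<2/\sqrt k} |\alpha|^2_{h_k,\omega}\, dv_X$, while the localization bound \eqref{eq4} applied with $r = 2/\sqrt k$ gives $\int_{|z|<2/\sqrt k} |\alpha|^2_{h_k,\omega}\, dv_X \leq C_2 (2/\sqrt k)^{2q} \|\alpha\|^2 \leq C_2' k^{-q}$. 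Multiplying yields $|\alpha(x_0)|^2_{h_k,\omega} \leq C k^{n-q}$, uniformly in $\alpha$ and $x_0 \in K$, hence $B^q_k(x_0) \leq C' k^{n-q}$ for all $k \geq k_0$. The finitely many remaining indices $1 \leq k < k_0$ are absorbed into the constant after possibly enlarging $C'$.

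The main obstacle is really just uniformity of constants. Lemma \ref{keylemma} provides constants depending only on the pair $K \subset \mathring K_2$ and the ambient geometry, so uniformity over $x_0 \in K$ is automatic. The only remaining subtlety lies in the reduction step: one must verify that the canonical bundle identification preserves both harmonicity and pointwise norms in a way compatible with the Bergman kernel, which is standard and essentially built into the definition of the induced metric on $K_X^*$. Once these pieces are in place, the proof is a two-line combination of \eqref{eq161} and \eqref{eq4} at the scale $r = 2/\sqrt{k}$, the characteristic length on which the Nakano $q$-semipositivity of $L^k$ becomes effective.
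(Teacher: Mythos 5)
Your proposal is correct and follows essentially the same route as the paper: the paper likewise twists by $K_X^{*}$ to pass to $\cH^{n,q}(X,L^k\otimes E\otimes \Lambda^n T^{(1,0)}X)$, combines \eqref{eq161} with the $r=2/\sqrt{k}$ case of \eqref{eq4} to bound the extremal function $S^q_k(x)$ by $Ck^{n-q}$, and concludes via $B^q_k\leq C S^q_k$. Your explicit handling of the finitely many small $k$ and of the fiber-rank constant $\binom{n}{q}$ only makes explicit what the paper leaves implicit.
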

  
  \begin{proof}
  	 We repeat the procedure in the proof of \cite[Theorem 1.1]{Wh:16} by using Lemma \ref{keylemma} instead of \cite[Lemma 3.4, 3.5]{Wh:16}. 
  	 By combine (\ref{eq161}) and the case $r=\frac{2}{\sqrt{k}}$ of (\ref{eq4}), we have 
  	 there exists $C>0$ such that 
  	 \begin{equation}\label{eq162}
  	 S_k^q(x):=\sup\left\{ \frac{|\alpha(x)|_{h_k,\omega}^2}{\|\alpha\|^{2}_{L^2}}: 
  	 \alpha\in \cH^{n,q}(X,L^{ k}\otimes E)  \right\} \leq C k^{n-q}
  	 \end{equation}
  	 for any $x\in K_1$ and $k\geq 1$. Finally, it follows from the fact $S^q_k(x)\leq B^q_k(x)\leq C S^q_k(x)$ and replacing $E\bigotimes\Lambda^n (T^{(1,0)}X)$ for $E$ in $\cH^{n,q}(X,L^k\otimes E)$.
  \end{proof}
  \begin{proof}[Proof of Theorem \ref{thm_local}]
  	Combining Theorem \ref{Localmain} and Proposition \ref{prop_refine_semip}.
  \end{proof}
  \subsection{The growth of cohomology on coverings}
\begin{proof}[Proof of Corollary \ref{thm_covering}]
	For a fundamental domain $U\Subset X$ with respect to $\Gamma$, by Theorem \ref{thm_local} and Proposition \ref{prop_refine_semip},
$	\dim_{\Gamma}\cH^{0,j}(X,L^k\otimes E) 
	=\int_{U}B^j_k(x) d v_{X}\leq Ck^{n-j}$ for all $j\geq q$.
\end{proof}

\begin{cor}\label{cor_cover_w}
	Let $(X,\omega)$ be a $\Gamma$-covering Hermitian manifold of dimension $n$.
	Let $(L,h^L)$ and $(E,h^E)$ be two $\Gamma$-invariant 
	holomorphic Hermitian line bundles on $X$.
	\begin{itemize}
		\item[(1)] If $\tau(L,h^L,\omega)\geq 0$ on $X$,
		then there exists $C>0$ such that for any 
		$k\geq 1$,
		\begin{equation}
			\dim_{\Gamma}{\overline{H}}^{0,n}_{(2)}(X, L^k\otimes E)
			\leq C.
		\end{equation}
		
		\item[(2)] If $\tau(L,h^L,\omega)\leq 0$ on $X$,
		then there exists $C>0$ such that for any 
		$k\geq 1$,
		\begin{equation}
			\dim_{\Gamma}{\overline{H}}^{0,0}_{(2)}(X, L^k\otimes E)
			\leq C. 
		\end{equation} 
	\end{itemize}
\end{cor}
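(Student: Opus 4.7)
The plan is to reduce both parts to Corollary \ref{thm_covering} applied in top degree $q=n$, using the characterization of the $\omega$-trace hypothesis from Proposition \ref{prop_tr_pos} together with a Hodge-star style duality on the $\Gamma$-covering.

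For part (1), Proposition \ref{prop_tr_pos}(2) tells us that $\tau(L,h^L,\omega)\geq 0$ on $X$ is equivalent to $(L,h^L)$ being Nakano $n$-semipositive with respect to $\omega$ on $X$. Since $(L,h^L)$, $(E,h^E)$ and $\omega$ are all $\Gamma$-invariant, Corollary \ref{thm_covering} applies with $q=j=n$ and yields $\dim_\Gamma \overline{H}^{0,n}_{(2)}(X,L^k\otimes E)\leq Ck^{n-n}=C$, as required.

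For part (2), the hypothesis $\tau(L,h^L,\omega)\leq 0$ combined with Proposition \ref{prop_tr_pos}(4) gives $\tau(L^*,h^{L^*},\omega)\geq 0$, so that $(L^*,h^{L^*})$ is Nakano $n$-semipositive. The idea is then to transfer the $(0,0)$-estimate for $L^k\otimes E$ into a $(0,n)$-estimate for a suitable twist of $(L^*)^k$, and then quote part (1). Concretely, the conjugate-linear Hodge star is a $\Gamma$-equivariant $L^2$-isometry that intertwines the Gaffney extensions of the Kodaira Laplacians on $F$ and $F^*$, and combined with the canonical identification $\wedge^{n,n}T^*X\otimes F\cong \wedge^{0,n}T^*X\otimes(F\otimes K_X)$ it produces $\Gamma$-equivariant isomorphisms
\[
\cH^{0,0}(X,L^k\otimes E)\;\cong\;\cH^{n,n}(X,L^{-k}\otimes E^*)\;\cong\;\cH^{0,n}(X,(L^*)^k\otimes(E^*\otimes K_X)).
\]
Because these isomorphisms are $\Gamma$-equivariant they preserve von Neumann $\Gamma$-dimension, and via \eqref{eq47} the same identity holds for the reduced $L^2$-Dolbeault cohomology. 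Since $K_X$ inherits a $\Gamma$-invariant Hermitian structure from $\omega$, the line bundle $E^*\otimes K_X$ is $\Gamma$-invariant, so applying part (1) with $L$ replaced by $L^*$ and $E$ replaced by $E^*\otimes K_X$ yields $\dim_\Gamma\overline{H}^{0,0}_{(2)}(X,L^k\otimes E)\leq C$.

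The only nontrivial step is the Hodge-star duality: one needs the conjugate-linear star to descend to the maximal Dolbeault complex in an $L^2$-isometric, $\Gamma$-equivariant way, so that the identification of harmonic spaces really preserves the $\Gamma$-trace and hence $\dim_\Gamma$. This is standard given that $\omega$, $h^L$, $h^E$ are all $\Gamma$-invariant, and is the only input beyond Proposition \ref{prop_tr_pos} and Corollary \ref{thm_covering}.
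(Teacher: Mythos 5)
Your proposal is correct and follows essentially the same route as the paper: part (1) is Proposition \ref{prop_tr_pos}(2) plus Corollary \ref{thm_covering} with $q=n$, and part (2) is Proposition \ref{prop_tr_pos}(4) together with $L^2$ Serre duality on the covering (the paper cites \cite[6.3.15]{CD:01} for exactly the $\Gamma$-equivariant Hodge-star identification you describe). The only difference is that you spell out the duality mechanism rather than citing it.
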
    
\begin{proof}
Apply Proposition \ref{prop_tr_pos}(2)(4), Corollary \ref{thm_covering} and Serre duality \cite[6.3.15]{CD:01}.
\end{proof}

	Since connected complex manifolds are either compact or $n$-complete, see \cite[IX.(3.5)]{Dem}, we can rephrase Corollary \ref{cor_cover_w} for the trivial $\Gamma$ by (\ref{dual_convx}) as follows. 
	Let $(X,\omega)$ be a connected Hermitian manifold of dimension $n$. Let $(L,h^L)$ and $(E,h^E)$ be holomorphic Hermitian line bundles on $X$.	
	If $\tau(L,h^L,\omega)\geq 0$ on $X$, then
		$\dim H^{n}(X, L^k\otimes E)
		\leq C$ for any 
		$k\geq 1$;	
		if $\tau(L,h^L,\omega)\leq 0$ on $X$, then
	     $\dim [H^{0,0}(X, L^k\otimes E)]_0
		\leq C$ for any 
		$k\geq 1$.
  
\subsection{The growth of cohomology on general Hermitian manifolds}
As another application, we can refine the main result in \cite{Wh:17}.
 \begin{proof}[Proof of Theorem \ref{thm_L2general}]
By Theorem \ref{Localmain} and the concentration condition, we have
\begin{eqnarray}\nonumber
\dim \ov H^{0,q}_{(2)}(X,L^k\otimes E)
&=&\dim \cH^{0,q}(X,L^k\otimes E)\\
&=&\sum_{j\geq 1} \|s^k_j\|^2\leq C_0\int_K B^q_k(x)dv_X\leq C_0Ck^{n-q}\vol(K)
\end{eqnarray}
for sufficiently large $k$. Note that $H^{0,q}_{(2)}(X,F)= \ov H^{0,q}_{(2)}(X,F)$ and the dimension is finite, when the fundamental estimate holds in bidegree $(0,q)$ for forms with values in a holomorphic Hermitian vector bundle $(F,h^F)$.
 \end{proof} 
     
\section{Refined estimates on complex manifolds with convexity }\label{sec_pf_main}
\subsection{Proof of the result on $q$-convex manifolds}

Let $X$ be a $q$-convex manifold of dimension $n$. Let $\varrho$ be a exhaustion function of $X$ and $K$ a compact exceptional set in $X$. By  definition, $\varrho\in \cC^\infty(X,\R)$ satisfies  $X_c:=\{ \varrho<c\}\Subset X$ for all $c\in \R$,  $\sqrt{-1}\dbar\ddbar\varrho$ has $n-q+1$ positive eigenvalues on $X\setminus K$. In this section, we fix real numbers $u_0, u$ and $v$ satisfying $u_0<u<c<v$ and $K\subset X_{u_0}$.  

We outline the idea of our proof of Theorem \ref{thm_main}. Let $(L,h^L), (E,h^E)$ be holomorphic Hermitian line bundles on $X$. The fundamental estimate holds in bidegree $(0,j)$ for forms with values in $L^k\otimes E$ for large $k$ and each $q\leq j\leq n$ on $X_c$ when $X$ is a $q$-convex manifold, see Proposition \ref{1coxFE}, which was obtained in \cite[Theorem 3.5.8]{MM} for the proof of Morse inequalities on $q$-convex manifolds. We observe that the Nakano $q$-semipositive is preserved by the modification of $h^L$, see Proposition \ref{prop_mod_chi}. By Theorem \ref{thm_L2general}, Proposition \ref{P:semipos} and related cohomology isomorphism, we obtain the desired results for $j\geq q$.
 
Firstly, we choose now a Hermitian metric $\omega$ on $X$ from \cite[Lemma 3.5.3]{MM}. 

\begin{lemma} \label{lowbd_rho_lem}
	For any $C_1>0$ there exists a metric $g^{TX}$ (with Hermitian form $\omega$) on $X$ such that for any $j\geq q$ and any holomorphic Hermitian vector bundle $(F,h^F)$ on $X$,
	\begin{equation}
	\left\langle (\dbar\ddbar\varrho)(w_l,\ol{w}_k)\ol{w}^k\wedge i_{\ol{w}_l}s,s \right\rangle_h\geq C_1|s|^2, \quad s\in \Omega^{0,j}_0(X_{v}\setminus \ov X_{u_0},F),
	\end{equation}
	where $\{ w_l \}_{l=1}^n$ is a local orthonormal frame of $T^{(1,0)}X$ with dual frame $\{ w^l\}_{l=1}^n$ of $T^{(1,0)*}X$.
\end{lemma}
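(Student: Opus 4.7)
The plan is to reduce the stated inequality to a pointwise estimate on sums of eigenvalues of $\dbar\ddbar\varrho$ with respect to $\omega$, and then to construct $\omega$ by enlarging a reference metric in the positive eigendirections of $\dbar\ddbar\varrho$ over the compact annulus $\ol X_v\setminus X_{u_0}$. In a local $\omega$-orthonormal frame $\{w_l\}$ diagonalizing $\dbar\ddbar\varrho$ at a point $x\in \ol X_v\setminus X_{u_0}$, with eigenvalues $\lambda_1(x)\leq\cdots\leq\lambda_n(x)$, a direct computation (writing $s=\sum_{|J|=j}s_J\,\ol w^J$ and using that $\ol w^l\wedge i_{\ol w_l}$ multiplies $\ol w^J$ by the indicator $\mathbf{1}_{l\in J}$) yields
\begin{equation*}
\big\langle (\dbar\ddbar\varrho)(w_l,\ol{w}_k)\ol{w}^k\wedge i_{\ol{w}_l}s,s \big\rangle_h
=\sum_{|J|=j}\Big(\sum_{l\in J}\lambda_l(x)\Big)|s_J(x)|^2_{h^F}.
\end{equation*}
Since $\sum_{l\in J}\lambda_l\geq \lambda_1+\cdots+\lambda_j$ when $|J|=j$, it suffices to ensure $\lambda_1(x)+\cdots+\lambda_j(x)\geq C_1$ for every $j\geq q$ and every $x$ in the annulus; equivalently, $\lambda_1+\cdots+\lambda_q\geq C_1$ together with $\lambda_i\geq 0$ for $i\geq q$.

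Next I would construct $\omega$ by rescaling. Fix any Hermitian metric $\omega_0$ and let $\mu_1(x)\leq\cdots\leq\mu_n(x)$ be the $\omega_0$-eigenvalues of $\dbar\ddbar\varrho$. Since $K\subset X_{u_0}$, the annulus $\ol X_v\setminus X_{u_0}$ is a compact subset of $X\setminus K$, and $q$-convexity supplies at least $n-q+1$ positive eigenvalues at each of its points; compactness then yields uniform constants $\delta>0$ and $M>0$ with $\mu_j(x)\geq\delta$ for $j\geq q$ and $|\mu_j(x)|\leq M$ for $j<q$. Locally, in a smooth $\omega_0$-orthonormal frame $\{e_i\}$ diagonalizing $\dbar\ddbar\varrho$, I define $\omega$ by the Gram matrix $\mathrm{diag}(1,\ldots,1,A^{-1},\ldots,A^{-1})$ with $A^{-1}$ in the last $n-q+1$ slots; the $\omega$-eigenvalues of $\dbar\ddbar\varrho$ become $\mu_1,\ldots,\mu_{q-1},A\mu_q,\ldots,A\mu_n$, and for $A$ large these rearrange into $\lambda_1\leq\cdots\leq\lambda_n$ with $\lambda_1+\cdots+\lambda_j\geq A\delta-(q-1)M$ for every $j\geq q$. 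Choosing $A\geq(C_1+(q-1)M)/\delta$ produces the required bound on the annulus.

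The main obstacle is to globalize this pointwise construction into a smoothly varying Hermitian metric on $X$, because the eigenspaces of $\dbar\ddbar\varrho$ need not depend smoothly on $x$ at points where $\omega_0$-eigenvalues collide. This is handled as in \cite[Lemma 3.5.3]{MM}: rather than fully diagonalizing $\dbar\ddbar\varrho$, one constructs over a neighborhood of the compact annulus a smooth $\omega_0$-orthogonal splitting $T^{1,0}X=V^+\oplus V^-$ with $\mathrm{rank}\,V^+=n-q+1$ and $\dbar\ddbar\varrho$ uniformly positive on $V^+$ (obtained from local spectral projections with cutoff level in the resolvent set, glued by a partition of unity), and sets $\omega:=\omega_0|_{V^-}\oplus A^{-1}\omega_0|_{V^+}$ on that neighborhood, extended smoothly by $\omega_0$ elsewhere on $X$. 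The eigenvalue estimate for $\omega$ then follows from the same block-diagonal computation, and because the lemma tests only forms supported in $X_v\setminus\ol X_{u_0}$, the behavior of $\omega$ off the annulus is immaterial.
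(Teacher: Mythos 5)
The paper does not actually prove this statement: it is imported verbatim from \cite[Lemma 3.5.3]{MM}, and the text surrounding it in Section 4 consists only of that citation. Your proposal therefore supplies strictly more than the paper does, and its core is the standard argument underlying the cited lemma: the pointwise identity expressing the curvature-type term as $\sum_{|J|=j}(\sum_{l\in J}\lambda_l)|s_J|^2$ (the same computation as in the paper's (2.11)), the reduction to the bound $\lambda_1+\cdots+\lambda_j\geq C_1$ for $j\geq q$ on the compact annulus $\ov X_v\setminus X_{u_0}\subset X\setminus K$, and the anisotropic rescaling that shrinks the metric by $A^{-1}$ on an $(n-q+1)$-dimensional positive subbundle so that the sorted eigenvalue sums become $\geq A\delta-(q-1)M$. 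All of this is correct (your ``equivalently'' should be ``it suffices that,'' since $\lambda_1+\cdots+\lambda_j\geq C_1$ for all $j\geq q$ does not force $\lambda_i\geq 0$ for $i>q$, but only sufficiency is used).

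The one step I would not accept as written is the globalization device: ``local spectral projections with cutoff level in the resolvent set, glued by a partition of unity.'' A convex combination $\sum_i\chi_iP_i$ of projections is not a projection, so this does not by itself produce a smooth rank-$(n-q+1)$ subbundle $V^+$; moreover a single cutoff level below $\mu_q$ need not lie in the resolvent set at every point of the annulus (the $(q-1)$-st eigenvalue can cross it), so the rank of the spectral projection can jump. Since you defer this step to \cite[Lemma 3.5.3]{MM} anyway, the argument is acceptable in the same sense that the paper's citation is, but a cleaner self-contained repair avoids subbundles entirely: let $\dot\varrho$ be the $g_0$-Hermitian endomorphism representing $\dbar\ddbar\varrho$ and set $g:=g_0(f(\dot\varrho)\cdot,\cdot)$ for a smooth $f>0$ with $f(t)=1$ for $t\leq 0$ and $f(t)=1/(At)$ for $t\geq\delta$. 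Smooth functional calculus makes $g$ a globally smooth metric, its eigenvalues of $\dbar\ddbar\varrho$ are $\mu_i/f(\mu_i)$, which are $\geq A\delta^2$ for the top $n-q+1$ and $\geq-M$ otherwise, and the same arithmetic then yields the bound for $A$ large.
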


Now we consider the $q$-convex manifold $X$ associated with the metric $\omega$ obtained above as a Hermitian manifold $(X,\om)$. Note for arbitrary holomorphic vector bundle $F$ on a relatively compact domain $M$ in $X$, the Hilbert space adjoint $\ddbar_H^{F*}$ of $\ddbar^F$ coincides with the formal adjoint $\ddbar^{F*}$ of $\ddbar^F$ on $B^{0,j}(M,F)=\Dom(\ddbar_H^{F*})\cap \Omega^{0,j}(\overline{M},F)$, $1\leq j\leq n$. So we simply use the notion $\ddbar^{F*}$ on $B^{0,j}(M,F)$, $1\leq j\leq n$.

Secondly, we will modify hermitian metric $h^L_\chi$ on $L$ and show the fundamental estimate fulfilled.
Let $\chi(t)\in\cC^\infty(\R)$ such that $\chi'(t)\geq 0$, $\chi''(t)\geq 0$, which will be determined later. We define a Hermitian metric $h^{L}_\chi:=h^{L}e^{-\chi(\varrho)}$ on $L$, and thus the modified curvature is
\begin{equation}
R^{L_\chi}=R^L+\chi'(\varrho)\dbar\ddbar\varrho+\chi''(\varrho)\dbar\varrho\wedge\ddbar\varrho.
\end{equation}

\begin{prop}\label{1coxFE}
	Let $X$ be a $q$-convex manifold of dimension $n$ with the exceptional set $K\subset X_c$. Then there exists a compact subset $K'\subset X_c$ and $C_0, C_3>0$ such that for sufficiently large $k$, we have 
	\begin{equation}
	\|s\|^2\leq \frac{C_0}{k}(\|\ddbar^E_ks\|^2+\|\ddbar^{E*}_{k,H}s\|^2)+C_0\int_{K'} |s|^2 dv_X
	\end{equation}
	for any $s\in \Dom(\ddbar^E_k)\cap \Dom(\ddbar^{E*}_{k,H})\cap L^2_{0,j}(X_c,L^k\otimes E)$ and $q\leq j \leq n$,
	where $\chi'(\varrho)\geq C_3$ on $X_v\setminus \overline{X}_u$ and the $L^2$-norm is given by $\omega$, $h^{L^k}_\chi$ and $h^E$ on $X_c$.
\end{prop}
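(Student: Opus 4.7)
The plan is to apply the Bochner-Kodaira-Nakano formula with boundary term on the relatively compact domain $X_c\subset X$ using the modified Hermitian metric $h^{L^k}_\chi\otimes h^E$, and to exploit the positivity contributed by the weight $\chi$ together with Lemma \ref{lowbd_rho_lem}. By \cite[Lemma 3.5.1]{MM}, the space $B^{0,j}(X_c,L^k\otimes E)$ is dense in $\Dom(\ddbar^E_k)\cap\Dom(\ddbar^{E*}_{k,H})\cap L^2_{0,j}(X_c,L^k\otimes E)$ in the graph norm, so it suffices to prove the claimed inequality for smooth $s\in B^{0,j}(X_c,L^k\otimes E)$ with $q\leq j\leq n$.

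For such $s$, the Bochner-Kodaira-Nakano identity with boundary term (cf.\ \cite[Cor.\ 1.4.22, Thm.\ 1.4.23]{MM}) applied to the holomorphic Hermitian line bundle $L^k_\chi\otimes E$ yields
\begin{equation*}
\|\ddbar^E_k s\|^2+\|\ddbar^{E*}_{k,H} s\|^2\geq \left\langle\left[\sqrt{-1}\bigl(kR^{L_\chi}+R^E\bigr),\Lambda\right]s,s\right\rangle+\int_{bX_c}\left\langle\mL_\varrho s,s\right\rangle_h\,dv_{bX_c}-C\|s\|^2,
\end{equation*}
where $C>0$ absorbs the torsion of $\omega$ and the curvature of $(E,h^E)$, and $\mL_\varrho$ is the Levi form of $\varrho$ on $T^{(1,0)}bX_c$. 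Since $bX_c\subset X\setminus K$, the function $\varrho$ is strictly $q$-convex in a neighborhood of $bX_c$, so $\mL_\varrho$ has at least $n-q$ positive eigenvalues on $T^{(1,0)}bX_c$; a standard linear-algebra computation (combined with the $\ddbar$-Neumann boundary condition $i_{e_{\bn}^{(0,1)}}s=0$) shows the boundary integral is non-negative on $(0,j)$-forms for $j\geq q$.

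Next substitute $R^{L_\chi}=R^L+\chi'(\varrho)\dbar\ddbar\varrho+\chi''(\varrho)\dbar\varrho\wedge\ddbar\varrho$ into the interior curvature term. Choose $\chi\in\cC^\infty(\R)$ with $\chi',\chi''\geq 0$, $\chi'\equiv 0$ on $(-\infty,u_0]$, and $\chi'\geq C_3$ on $[u,v]$; one may further enforce $\chi''$ to grow as rapidly as desired to absorb any uncontrolled contributions. Since $\chi''\geq 0$ and $\sqrt{-1}\dbar\varrho\wedge\ddbar\varrho\geq 0$, the term $[k\chi''(\varrho)\sqrt{-1}\dbar\varrho\wedge\ddbar\varrho,\Lambda]$ is non-negative on $(0,j)$-forms. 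Meanwhile, Lemma \ref{lowbd_rho_lem} applied with sufficiently large $C_1$ gives
\begin{equation*}
\left\langle\left[k\chi'(\varrho)\sqrt{-1}\dbar\ddbar\varrho,\Lambda\right]s,s\right\rangle\geq kC_1\chi'(\varrho)|s|^2
\end{equation*}
for $s\in\Omega^{0,j}_0(X_v\setminus\ov X_{u_0},L^k\otimes E)$ with $j\geq q$. Setting $K':=\ov X_u\subset X_c$, combining these estimates and using that $kR^L$ and $R^E$ are bounded on $K'$, we obtain for $k$ sufficiently large a constant $C_4>0$ such that
\begin{equation*}
\|\ddbar^E_k s\|^2+\|\ddbar^{E*}_{k,H}s\|^2\geq kC_4\int_{X_c\setminus K'}|s|^2\,dv_X-C_5\int_{K'}|s|^2\,dv_X.
\end{equation*}
Decomposing $\|s\|^2=\int_{K'}|s|^2\,dv_X+\int_{X_c\setminus K'}|s|^2\,dv_X$ and rearranging gives the fundamental estimate with constant $C_0$ independent of $k$.

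The main obstacle is coordinating the interior curvature term and the boundary term at $bX_c$, as both depend on the same Hessian $\dbar\ddbar\varrho$ which only has $n-q+1$ positive eigenvalues. This tension is resolved via the choice of $\chi''$, whose non-negativity contributes an extra positive curvature bracket and simultaneously permits the Donnelly-Fefferman-type cancellation of any negative boundary remainder on $(0,j)$-forms with $j\geq q$, exactly as in the proof of \cite[Thm.\ 3.5.8]{MM}.
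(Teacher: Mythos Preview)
Your overall strategy follows the route of \cite[Theorem 3.5.8]{MM} and \cite[Proposition 3.8]{Wh:17}, which is exactly what the paper cites, so the approach is correct. However, two steps are not justified as written.

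First, the claim that the boundary integral $\int_{bX_c}\langle\mL_\varrho s,s\rangle_h\,dv_{bX_c}$ is non-negative cannot be deduced from the mere fact that $\dbar\ddbar\varrho$ has $n-q+1$ positive eigenvalues: the remaining $q-1$ eigenvalues may be arbitrarily negative, so a ``standard linear-algebra computation'' based on that count alone fails. What actually makes the boundary term non-negative is that you are working with the \emph{special} metric $\omega$ furnished by Lemma~\ref{lowbd_rho_lem}, under which the sum of any $q$ eigenvalues of $\dbar\ddbar\varrho$ on $X_v\setminus\ov X_{u_0}$ is $\geq C_1>0$. Since $bX_c\subset X_v\setminus\ov X_{u_0}$ and the Neumann condition $i_{e_{\bn}^{(0,1)}}s=0$ forces the Levi-form term to coincide with the full Hessian action, Lemma~\ref{lowbd_rho_lem} applies directly. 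You already invoke this lemma for the interior term, so simply use it again here.

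Second, the displayed inequality with a $k$-independent constant $C_5$ on the $K'$-integral is false: on $K'=\ov X_u$ you have no positivity from $\chi$, and the term $k[\sqrt{-1}R^L,\Lambda]$ contributes a lower bound of order $-k$, not $-C_5$. The correct inequality is
\[
\|\ddbar^E_k s\|^2+\|\ddbar^{E*}_{k,H}s\|^2\;\geq\; kC_4\int_{X_c\setminus K'}|s|^2\,dv_X\;-\;kC_5\int_{K'}|s|^2\,dv_X\;-\;C\|s\|^2,
\]
after which dividing by $kC_4$ and absorbing the last term for large $k$ does give the fundamental estimate with a $k$-independent $C_0$ (the factors of $k$ cancel). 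Your phrase ``$kR^L$ is bounded on $K'$'' obscures this cancellation.

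Finally, the closing paragraph about $\chi''$ effecting a ``Donnelly--Fefferman-type cancellation of any negative boundary remainder'' is incorrect and should be deleted: the boundary term is already non-negative by the choice of $\omega$, and $\chi''\geq 0$ merely contributes additional non-negative interior curvature. There is no cancellation mechanism of that kind in \cite[Theorem 3.5.8]{MM}.
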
 
\begin{proof}
	See \cite[Proposition 3.8]{Wh:17} or \cite[Theorem 3.5.8]{MM}.
\end{proof}  

Thirdly, we will show that $(L_\chi, h^{L_\chi})$ preserves the certain semi-positivity of $(L, h^L)$ by choosing a appropriate $\chi$ as follows. Let $C_3>0$ be in Lemma \ref{1coxFE}. We choose $\chi\in\cC^\infty(\R)$ such that $\chi''(t)\geq 0$, $\chi'(t)\geq C_3$ on $(u,v)$ and $\chi(t)=0$ on $(-\infty,u_0)$. Therefore, $\chi'(\varrho(x))\geq C_3>0$ on $X_v\setminus \overline{X}_u$ and $\chi(\varrho(x))=\chi'(\varrho(x))=0$ on $X_{u_0}$. Note $K\subset X_{u_0}$ and $u_0<u<c<v$. Now we have a fixed $\chi$ which leads to the following proposition.

\begin{prop}\label{prop_mod_chi}
	 $X$ is a $q$-convex manifold with Hermitian metric $\omega$ given by Lemma \ref{lowbd_rho_lem}. Let $j\geq q$. Suppose $(L,h^L)$ satisfies 
	\be
	\left\langle[\sqrt{-1}R^{(L,h^{L})},\Lambda]\alpha,\alpha\right
	\rangle_h \geq 0\quad \mbox{for all}~ \alpha\in \wedge^{n,j}T_x^*X, x\in X_c.
	\ee
Then,
	$(L_\chi,h^{L_\chi})$ satisfies
	\be
	\left\langle[\sqrt{-1}R^{(L_\chi,h^{L_\chi})},\Lambda]\alpha,\alpha\right\rangle_h \geq 0\quad \mbox{for all}~ \alpha\in \wedge^{n,j}T_x^*X, x\in X_c.
	\ee
	In particular, if $(L,h^L)\geq 0$ on $X_c$, $(L_\chi,h^{L_\chi})$ satisfies
	\be
	\left\langle[\sqrt{-1}R^{(L_\chi,h^{L_\chi})},\Lambda]\alpha,\alpha\right\rangle_h \geq 0\quad \mbox{for all}~ \alpha\in \wedge^{n,j}T_x^*X, x\in X_c, j\geq q.
	\ee
\end{prop}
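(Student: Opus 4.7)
The plan is to decompose the modified curvature
\begin{equation*}
R^{L_\chi}=R^L+\chi'(\varrho)\dbar\ddbar\varrho+\chi''(\varrho)\dbar\varrho\wedge\ddbar\varrho
\end{equation*}
and verify the Nakano $j$-semipositivity of $(L_\chi,h^{L_\chi})$ summand by summand on $X_c$, for each fixed $j\geq q$. Fixing $x\in X_c$ and an arbitrary $\alpha\in\wedge^{n,j}T_x^*X$, it suffices to show that each of the three contributions to $\langle[\sqrt{-1}R^{L_\chi},\Lambda]\alpha,\alpha\rangle_h$ is non-negative.

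The contribution from $R^L$ is non-negative by the hypothesis on $(L,h^L)$ on $X_c$. For the contribution from $\chi'(\varrho)\dbar\ddbar\varrho$, recall that $\chi$ was chosen so that $\chi'\equiv 0$ on $(-\infty,u_0]$; hence if $x\in X_{u_0}$ the scalar factor vanishes and there is nothing to check. Otherwise $x\in X_c\setminus X_{u_0}\subset X_v\setminus\overline{X}_{u_0}$, and Lemma \ref{lowbd_rho_lem}, read through the equivalent pointwise formulation (\ref{D:q-semi-local}) of Nakano $j$-semipositivity, states precisely that $\sqrt{-1}\dbar\ddbar\varrho$ is Nakano $j$-positive with respect to $\omega$ at $x$ for every $j\geq q$; multiplying by $\chi'(\varrho(x))\geq 0$ preserves the sign.

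For the third contribution, I would observe that $\sqrt{-1}\dbar\varrho\wedge\ddbar\varrho=\sqrt{-1}\partial\varrho\wedge\overline{\partial\varrho}$ is pointwise a semipositive real $(1,1)$-form (of rank at most one), so, viewed as the curvature of a Hermitian line bundle, it satisfies the usual semipositivity $\geq 0$ everywhere. Proposition \ref{P:semipos} then yields $\langle[\sqrt{-1}\dbar\varrho\wedge\ddbar\varrho,\Lambda]\alpha,\alpha\rangle_h\geq 0$ on $\wedge^{n,j}T_x^*X$ for any $1\leq j\leq n$, and $\chi''(\varrho(x))\geq 0$ again preserves the sign. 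Summing the three non-negative contributions proves the main inequality; the ``in particular'' statement is immediate since $(L,h^L)\geq 0$ on $X_c$ forces, via Proposition \ref{P:semipos}, the hypothesis of the main part to hold for every $1\leq j\leq n$, so in particular for every $j\geq q$.

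The only real subtlety is the bookkeeping between the two equivalent forms of Nakano $j$-semipositivity: the definition uses the bracket $[\sqrt{-1}R,\Lambda]$ on $(n,j)$-forms, while Lemma \ref{lowbd_rho_lem} is phrased on $(0,j)$-forms in the style of (\ref{D:q-semi-local}). Since this equivalence is already established in Section \ref{sec_prel}, no new estimate is required; the work consists simply in aligning the three pieces correctly and exploiting the design of $\chi$ so that the positivity of $\dbar\ddbar\varrho$ supplied by Lemma \ref{lowbd_rho_lem} is only invoked on the set $\{\chi'(\varrho)>0\}\subset X_v\setminus\overline{X}_{u_0}$, where that lemma applies.
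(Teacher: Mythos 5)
Your proof is correct and follows essentially the same route as the paper: decompose $R^{L_\chi}$ into its three summands, use the hypothesis for $R^L$, Lemma \ref{lowbd_rho_lem} (via the pointwise reformulation \eqref{D:q-semi-local}) on the set where $\chi'(\varrho)>0$, and the semipositivity of the rank-one form $\sqrt{-1}\dbar\varrho\wedge\ddbar\varrho$ together with Proposition \ref{P:semipos} for the last term. The only nitpick is the inclusion $X_c\setminus X_{u_0}\subset X_v\setminus\overline{X}_{u_0}$, which fails on $\{\varrho=u_0\}$; this is harmless since $\chi'(u_0)=0$ there, so that term contributes nothing.
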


\begin{proof}
	 $\imat R^{L_\chi}=\imat R^L+\imat \chi'(\varrho)\dbar\ddbar\varrho+\imat \chi''(\varrho)\dbar\varrho\wedge\ddbar\varrho$ on $X_c$.
		From the above definition of $\chi$, we have $\chi'(\varrho)\geq 0$ on $X$, $\chi'(\varrho)=0$ on $\ov X_{u_0}$, and $\chi''(\varrho)\geq 0$ on $X$. Since  $\imat\dbar\varrho\wedge\ddbar\varrho\geq 0$ on $X_c$, we have $\imat \chi''(\varrho)\dbar\varrho\wedge\ddbar\varrho\geq 0$ on $X_c$. Therefore, we only need to show that, for all $ \alpha\in \wedge^{n,j}T_x^*X$, $x\in X_c\setminus \ov X_{u_0}$,
		\be
		\left\langle [\sqrt{-1}\dbar\ddbar\varrho,\Lambda] \alpha,\alpha \right\rangle_h\geq 0.
		\ee  
	In fact, from Lemma \ref{lowbd_rho_lem}, for $s\in \Omega_0^{n,j}(X_v\setminus \ov X_{u_0})=\Omega_0^{0,j}(X_v\setminus \ov X_{u_0}, K_X)$ with $s(x)=\alpha\in \wedge^{n,j}T_x^*X$, $x\in X_c\setminus \ov X_{u_0}$,
	\begin{eqnarray}\nonumber
\left\langle [\sqrt{-1}\dbar\ddbar\varrho,\Lambda] \alpha,\alpha \right\rangle_h
&=&\left\langle [\sqrt{-1}\dbar\ddbar\varrho,\Lambda] s,s \right\rangle_h(x)
=
\left\langle \sqrt{-1}\dbar\ddbar\varrho\wedge\Lambda s,s \right\rangle_h(x)\\
&=& 	 
\left\langle   (\dbar\ddbar\varrho)(w_l,\ol{w}_k)\ol{w}^k\wedge i_{\ol{w}_l}s,s \right\rangle_h(x)
\geq C_1|s|_h^2(x)= C_1|\alpha|_h^2\geq 0.
	\end{eqnarray}
	Thus the proof is complete.
\end{proof}

Now we combine the above components and obtain:
\begin{thm}\label{T:qconvex}
	Let $X$ be a $q$-convex manifold of dimension $n$ with a Hermitian metric $\omega$ given by Lemma \ref{lowbd_rho_lem}.  Let $(L,h^L)$ and $(E,h^E)$ be holomorphic Hermitian line bundles on $X$. Let the exceptional set $K\subset X_c$. Let $j\geq q$ and $(L,h^L)$ satisfies, with respect to $\omega$, 
	\be
	\left\langle[\sqrt{-1}R^{(L,h^{L})},\Lambda]\alpha,\alpha\right\rangle_h \geq 0\quad \mbox{for all}~ \alpha\in \wedge^{n,j}T_x^*X, x\in X_c.
	\ee
	Then, for all $k\geq 1$,
$	\dim H^{j}(X,L^k\otimes E)\leq Ck^{n-j}$.
\end{thm}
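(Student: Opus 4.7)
The plan is to follow the three-ingredient strategy announced in the outline preceding Proposition \ref{1coxFE}: (i) replace $h^L$ by $h^L_\chi=h^Le^{-\chi(\varrho)}$ on the sublevel set $X_c$, (ii) produce the fundamental estimate in bidegree $(0,j)$ for $j\ge q$ on $X_c$ for $L^k\otimes E$ with the new metric, (iii) invoke Theorem \ref{thm_L2general} together with an Andreotti--Grauert style isomorphism that identifies $H^j(X,L^k\otimes E)$ with the $L^2$-cohomology of $(X_c,L^k\otimes E, h^{L_\chi}_k)$.

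First I would fix the cut-off function $\chi\in\cC^\infty(\R)$ exactly as in the paragraph preceding Proposition \ref{prop_mod_chi}: $\chi''\ge 0$, $\chi'\ge C_3$ on $(u,v)$, and $\chi=\chi'=0$ on $(-\infty,u_0)$. With this choice Proposition \ref{1coxFE} applies on $X_c$ and yields a compact $K'\subset X_c$ and $C_0>0$ such that for all sufficiently large $k$,
\begin{equation}
\|s\|^2\leq \frac{C_0}{k}\bigl(\|\ddbar^E_ks\|^2+\|\ddbar^{E*}_{k,H}s\|^2\bigr)+C_0\int_{K'} |s|^2\, dv_X
\end{equation}
for $s\in\Dom(\ddbar^E_k)\cap\Dom(\ddbar^{E*}_{k,H})\cap L^2_{0,j}(X_c,L^k\otimes E)$ with $q\le j\le n$. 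This is the fundamental estimate in bidegree $(0,j)$ for forms on the Hermitian manifold $(X_c,\omega)$ with values in $L^k\otimes E$ equipped with $(h^L_\chi)^{\otimes k}\otimes h^E$; in particular the concentration condition holds with exceptional compact set $K'\Subset X_c$.

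Next I would check the semipositivity hypothesis of Theorem \ref{thm_L2general} for the modified metric on a neighborhood of $K'$. By Proposition \ref{prop_mod_chi}, the Nakano $j$-semipositivity of $(L,h^L)$ with respect to $\omega$ on $X_c$ is preserved by the modification, so $(L_\chi,h^{L_\chi})$ is Nakano $j$-semipositive on $X_c$, in particular on a neighborhood of $K'$. Applying Theorem \ref{thm_L2general} to $(X_c,\omega)$ with the line bundle $(L_\chi,h^{L_\chi})$ and the twisting $(E,h^E)$ yields, for every sufficiently large $k$,
\begin{equation}
\dim H^{0,j}_{(2)}(X_c,L^k\otimes E)\leq Ck^{n-j},\quad j\ge q,
\end{equation}
where the $L^2$-cohomology is computed with respect to $\omega$ and the modified fibre metric.

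The last step is to translate this $L^2$-Dolbeault estimate on $X_c$ into an estimate on sheaf cohomology on $X$. For a $q$-convex manifold, the classical Andreotti--Grauert theorem gives $\dim H^j(X,L^k\otimes E)<\infty$ and a canonical restriction isomorphism $H^j(X,L^k\otimes E)\cong H^j(X_c,L^k\otimes E)$ for all $j\ge q$, provided $c$ is chosen past all critical values of $\varrho$ outside $K$; on the relatively compact strongly pseudoconvex-type sublevel set $X_c$, Hörmander-type regularity together with the fundamental estimate identifies the Dolbeault cohomology with the $L^2$-Dolbeault cohomology (see \cite[Theorem 3.5.8 and Section 3.5]{MM}). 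Composing these isomorphisms absorbs the change of fibre metric (since $\chi(\varrho)$ is bounded on $\overline{X_c}$, the $L^2$-spaces for $h_k$ and $h^\chi_k$ are equivalent on $X_c$), and gives $\dim H^j(X,L^k\otimes E)\le Ck^{n-j}$ for all $j\ge q$, handling the remaining finitely many small $k$ by finite-dimensionality. The main obstacle is precisely this last bookkeeping step: one must be careful that the Andreotti--Grauert isomorphism, the Dolbeault/$L^2$-Dolbeault identification via the fundamental estimate, and the harmless change of metric $h^L\leadsto h^L_\chi$ all line up so that the bound proved for the $L^2$ group on $(X_c,h^{L_\chi}_k)$ really controls $\dim H^j(X,L^k\otimes E)$.
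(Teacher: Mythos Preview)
Your proposal is correct and follows essentially the same route as the paper: you invoke Proposition \ref{1coxFE} for the fundamental estimate on $X_c$ with the modified metric $h^L_\chi$, use Proposition \ref{prop_mod_chi} to preserve Nakano $j$-semipositivity, apply Theorem \ref{thm_L2general} to bound $\dim H^{0,j}_{(2)}(X_c,L^k\otimes E)$, and finish with the Andreotti--Grauert/H\"{o}rmander isomorphisms plus finite-dimensionality for small $k$. The only cosmetic difference is that the paper routes the isomorphism chain through $X_v$ (namely $H^j(X,\cdot)\cong H^j(X_v,\cdot)\cong H^{0,j}(X_v,\cdot)\cong H^{0,j}_{(2)}(X_c,\cdot)$ via \cite[Theorems 3.5.6, 3.5.7, B.4.4]{MM}) rather than restricting directly to $X_c$, but this is a minor bookkeeping variant of the same argument.
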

 
\begin{proof}
Proposition \ref{1coxFE} entails the fundamental estimate holds in bidegree $(0,j)$ for forms with values in $L^k\otimes E$ for large $k$ on $X_c$ with respect to $\omega, h^L_\chi$ and $h^E$ and $j\geq q$. Thus, by Proposition \ref{prop_mod_chi} and Theorem \ref{thm_L2general}, there exists $C>0$ such that for sufficiently large $k$,
\begin{equation}
\dim H_{(2)}^{0,j}(X_c,L^k\otimes E)=\dim \cH^{0,j}(X_c,L^k\otimes E)\leq Ck^{n-j}
\end{equation}
holds with respect to $h^E$ and the chosen metrics $\omega$ and $h^{L}_\chi$ on $X_c$ (as in \cite{Wh:17}).
By results of H\"{o}rmander \cite[Theorem 3.5.6]{MM}, Andreotti-Grauert \cite[Theorem 3.5.7]{MM} and the Dolbeault isomorphism \cite[Theorem B.4.4]{MM}, we have, for $j\geq q$,
\be
\quad\quad H^j(X,L^k\otimes E)\cong H^j(X_v,L^k\otimes E)\cong H^{0,j}(X_v,L^k\otimes E)\cong H_{(2)}^{0,j}(X_c,L^k\otimes E).
\ee
Thus the conclusion holds for sufficiently large $k$. Note that for any holomorphic vector bundle $F$, $\dim H^j(X,F)<\infty$ for $j\geq q$ by the result of Andreotti-Grauert \cite[Theorem B.4.8]{MM}. So the conclusion holds for all $k\geq 1$.
\end{proof}
   
\begin{proof}[Proof of Theorem \ref{thm_main}]
	Let $X_c$ be a sublevel set including $K$ such that $(L,h^L)\geq 0$ on $X_c$. From Proposition \ref{P:semipos}, $(L,h^L)\geq 0$ on $X_c$ implies for any Hermitian metric $\omega$, 
	\be
	\left\langle[\sqrt{-1}R^{(L,h^{L})},\Lambda]\alpha,\alpha\right\rangle_h \geq 0\quad \mbox{for all}~ \alpha\in \wedge^{n,j}T_x^*X, x\in X_c, j\geq 1.
	\ee
 Then the conclusion follows by Theorem \ref{T:qconvex}. 
\end{proof}

By adapting the duality formula \cite[20.7 Theorem]{HL:88} to Theorem \ref{thm_main}, we have the analogue result to \cite[Remark 4.4]{Wh:16} for seminegative line bundles.
\begin{cor}
	Let $X$ be a $q$-convex manifold of dimension $n$ and let $(L,h^L), (E,h^E)$ be holomorphic Hermitian line bundles on $X$. Let $(L,h^L)$ be seminegative on a neighborhood of the exceptional subset $K$. Then there exists $C>0$ such that for any $0\leq j\leq n-q$ and  $k\geq 1$, the $j$-th cohomology with compact supports
	\begin{equation}
	\dim [H^{0,j}(X,L^k\otimes E)]_0\leq Ck^{j}.
	\end{equation}
\end{cor}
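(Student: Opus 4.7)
The proof plan is to deduce the statement from Theorem \ref{thm_main} via the Serre-type duality recorded in (\ref{dual_convx}). First, since $X$ is $q$-convex of dimension $n$ and $L^k\otimes E$ is a holomorphic vector bundle on $X$, substituting $L^k\otimes E$ in place of $E$ in (\ref{dual_convx}) and using the identity $(L^k\otimes E)^* = (L^*)^k\otimes E^*$ gives
\[
\dim [H^{0,j}(X, L^k\otimes E)]_0 \;=\; \dim H^{0,n-j}(X, (L^*)^k\otimes E^*\otimes K_X)
\]
for all $0\leq j\leq n-q$ and all $k\geq 1$.

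Next, I note that the induced Chern curvature of the dual line bundle satisfies $R^{(L^*, h^{L^*})} = -R^{(L,h^L)}$, so that $(L^*, h^{L^*})$ is semipositive on the same neighborhood of $K$ on which $(L,h^L)$ is seminegative. Moreover, $E^*\otimes K_X$ is itself a holomorphic Hermitian line bundle (equipped with the induced metric from $h^E$ and any chosen metric on $K_X$, e.g.\ the one induced by $\omega$). This puts us exactly in the setting of Theorem \ref{thm_main} with $L$ replaced by $L^*$ and the twisting line bundle replaced by $E^*\otimes K_X$.

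Applying Theorem \ref{thm_main} then produces a constant $C>0$ such that for every integer $m\geq q$ and every $k\geq 1$,
\[
\dim H^m(X, (L^*)^k\otimes E^*\otimes K_X) \;\leq\; C k^{n-m}.
\]
Setting $m=n-j$ for $0\leq j\leq n-q$ ensures $m\geq q$, so that the right-hand side becomes $Ck^j$. Combining with the Dolbeault isomorphism $H^m(X,F)\cong H^{0,m}(X,F)$ and the duality displayed above yields the desired bound $\dim [H^{0,j}(X, L^k\otimes E)]_0 \leq Ck^j$.

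I do not anticipate any serious obstacle: the whole argument is a routine dualization that transports the compact-support cohomology group to the ordinary cohomology of the semipositive line bundle $L^*$, to which Theorem \ref{thm_main} applies. The only points to verify are that the exceptional set and the neighborhood of semipositivity are preserved under passing to duals (immediate from $R^{L^*}=-R^L$), and that the duality (\ref{dual_convx}) can indeed be invoked with a vector bundle of rank larger than one, which is how the statement there is formulated.
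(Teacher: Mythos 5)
Your proposal is correct and follows essentially the same route as the paper: both dualize via (\ref{dual_convx}) to identify $\dim [H^{0,j}(X,L^k\otimes E)]_0$ with $\dim H^{0,n-j}(X,(L^*)^k\otimes E^*\otimes K_X)$, observe that $R^{L^*}=-R^L$ makes $L^*$ semipositive near the exceptional set, and apply Theorem \ref{thm_main} to $L^*$ twisted by the line bundle $E^*\otimes K_X$ in degree $n-j\geq q$. No gaps.
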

\begin{proof}
	For any $ q\leq s\leq n$,
	$\dim [H^{0,n-s}(X,L^k\otimes E)]_0= \dim H^{0,s}(X,L^{k*}\otimes E^*\otimes K_X)\leq Ck^{n-s}$ by  Theorem \ref{thm_main} and (\ref{dual_convx}), see \cite{AG:62} and \cite[20.7 Theorem]{HL:88}.
\end{proof} 

\begin{rem}[Vanishing theorems on $q$-convex manifolds]
	 \label{remqconvex}
	Let $(E,h^E)$ be a holomorphic vector bundle on $X$. If $(L,h^L)>0$ on $X_c$ with $K\subset X_c$ instead of the hypothesis $(L,h^L)\geq 0$ on $X_c$ in Theorem \ref{thm_main}, then for $j\geq q$ and sufficiently large $k$,
	$\dim H^j(X,L^k\otimes E)=0$,
see \cite[Theorem 3.5.9]{MM}. And it can be generalized to Nakano $q$-positive as follows.
\end{rem}   

\begin{thm}\label{T:qvsh}
	Let $(X,\omega)$ be a $q$-convex manifold of dimension $n$ with the Hermitian metric $\omega$ given by Lemma \ref{lowbd_rho_lem} and $1\leq q\leq n$. Let $E, L$ be holomorphic vector bundle with $\rank(L)=1$. 
	Let $K\subset X$ be the exceptional set. If $(L,h^L)$ is Nakano $p$-positive with respect to $\omega$ on $X_c$  with $K\subset X_c$, then for $j\geq \max\{p,q\}$ and $k$ sufficiently large,
	\begin{equation}  
	H^j(X,L^k\otimes E)=0.
	\end{equation}
\end{thm}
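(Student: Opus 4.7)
The plan is to parallel the proof of Theorem \ref{T:qconvex}, but replace the growth estimate by outright vanishing by exploiting the \emph{strict} positivity. By Remark \ref{rem_j_pos}, Nakano $p$-positivity of $(L,h^L)$ on $X_c$ implies Nakano $j$-positivity for all $j\geq p$, so the working hypothesis for any given $j\geq\max\{p,q\}$ is that $\langle[\sqrt{-1}R^{(L,h^L)},\Lambda]\alpha,\alpha\rangle_h\geq c_0|\alpha|_h^2$ uniformly on $\ov X_c$ for $\alpha\in\bigwedge^{n,j}T_x^*X$, with some $c_0>0$ (by compactness of $\ov X_c$).

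First I would equip $X$ with the metric $\omega$ supplied by Lemma \ref{lowbd_rho_lem}, replace $h^L$ by $h^L_\chi:=h^L e^{-\chi(\varrho)}$ for exactly the cut-off $\chi$ fixed before Proposition \ref{prop_mod_chi}, and check that the strict version of Proposition \ref{prop_mod_chi} still holds. Indeed, on $X_c$,
\[
\sqrt{-1}R^{L_\chi}=\sqrt{-1}R^L+\sqrt{-1}\chi'(\varrho)\dbar\ddbar\varrho+\sqrt{-1}\chi''(\varrho)\dbar\varrho\wedge\ddbar\varrho,
\]
and by Lemma \ref{lowbd_rho_lem} together with $\chi'(\varrho)\geq 0$, $\chi''(\varrho)\geq 0$ and $\sqrt{-1}\dbar\varrho\wedge\ddbar\varrho\geq 0$, the last two terms contribute nonnegatively to $[\sqrt{-1}R^{L_\chi},\Lambda]$ on $(n,j)$-forms with $j\geq q$. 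Hence the uniform strict lower bound passes to $(L_\chi,h^{L_\chi})$: there exists $c_0'>0$ (possibly smaller than $c_0$) so that $\langle[\sqrt{-1}R^{(L_\chi,h^{L_\chi})},\Lambda]\alpha,\alpha\rangle_h\geq c_0'|\alpha|_h^2$ on $\ov X_c$ for every $\alpha\in\bigwedge^{n,j}T_x^*X$ with $j\geq\max\{p,q\}$.

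Next I would revisit the Bochner--Kodaira--Nakano identity on $X_c$ with $\dbar$-Neumann boundary conditions (the setup already used to obtain Proposition \ref{1coxFE}), but retain the full curvature contribution of $L^k\otimes E$ rather than discarding it. For $s\in B^{0,j}(X_c,L^k\otimes E)\cap\Dom(\square^E_k)$ the boundary term has the right sign for $j\geq q$ (as in the derivation of Proposition \ref{1coxFE}), and the bundle $E$ together with the torsion of $\omega$ produce only a $k$-independent error; combined with the pointwise bound from the previous step this yields
\[
\|\ddbar^E_k s\|^2+\|\ddbar^{E,*}_k s\|^2\;\geq\;(c_0'k-C')\|s\|^2
\]
for some constant $C'>0$ independent of $k$, valid on $\ov X_c$ for all $j\geq\max\{p,q\}$. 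Hence $\cH^{0,j}(X_c,L^k\otimes E)=0$ once $k>C'/c_0'$, and the fundamental estimate (Proposition \ref{1coxFE}) then forces $H^{0,j}_{(2)}(X_c,L^k\otimes E)=0$. The isomorphism chain invoked in the proof of Theorem \ref{T:qconvex},
\[
H^j(X,L^k\otimes E)\cong H^j(X_v,L^k\otimes E)\cong H^{0,j}(X_v,L^k\otimes E)\cong H^{0,j}_{(2)}(X_c,L^k\otimes E),
\]
then gives $H^j(X,L^k\otimes E)=0$ for $j\geq\max\{p,q\}$ and $k$ sufficiently large.

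The main obstacle is making sure that once $h^L$ is replaced by $h^L_\chi$ to obtain the fundamental estimate on $X_c$, the \emph{strict} Nakano $p$-positivity survives uniformly on $\ov X_c$ rather than being degraded to mere semipositivity, so that the curvature term in the Bochner identity grows linearly in $k$ and dominates every $k$-independent error, including the boundary contribution and the $R^E$-term. Once this uniform lower bound is in hand, vanishing of harmonic $(0,j)$-forms for $j\geq\max\{p,q\}$ and large $k$ is immediate, and the translation into the vanishing of $H^j(X,L^k\otimes E)$ is by the same Andreotti--Grauert and Dolbeault isomorphisms already used for Theorem \ref{T:qconvex}.
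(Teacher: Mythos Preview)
Your argument is correct and follows the same overall scheme as the paper's proof: establish a Bochner--Kodaira--Nakano estimate on $X_c$ in which the curvature term grows like $k$ while all other terms are $k$-independent, deduce vanishing of harmonic $(0,j)$-forms for large $k$, and then pass to $H^j(X,L^k\otimes E)$ via the Andreotti--Grauert/H\"ormander/Dolbeault isomorphism chain.

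The one point where you take a different route is the passage through the modified metric $h^L_\chi$. The paper works directly with the original $h^L$: once the special metric $\omega$ of Lemma \ref{lowbd_rho_lem} is in place, the boundary Levi term on $bX_c$ is already nonnegative for $s\in B^{0,j}(X_c,L^k\otimes E)$ with $j\geq q$ (because the $\ddbar$-Neumann condition kills the normal contributions and the full-sum estimate of Lemma \ref{lowbd_rho_lem} then applies on $bX_c\subset X_v\setminus\ov X_{u_0}$). Since the Nakano $p$-positivity of $(L,h^L)$ on $\ov X_c$ already supplies the uniform strict lower bound $\langle R^L(w_i,\ov w_j)\ov w^j\wedge i_{\ov w_i}s,s\rangle_h\geq C_L|s|_h^2$ for $j\geq p$, the Bochner formula with boundary term gives $\|s\|^2\leq \tfrac{C_2}{k}(\|\ddbar^E_k s\|^2+\|\ddbar^{E*}_k s\|^2)$ on all of $B^{0,j}(X_c,L^k\otimes E)$ for $j\geq\max\{p,q\}$ without any weight modification (this is the content of the reference to \cite[Lemma 3.5.4]{MM}). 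Your detour through $h^L_\chi$ is harmless---you correctly observe that the added terms only help, so the strict lower bound survives with $c_0'\geq c_0$---but it is unnecessary here; the modification was introduced in Proposition \ref{1coxFE} precisely to manufacture positivity near the boundary in the absence of any curvature assumption on $L$, a role that the strict Nakano $p$-positivity already fills. Likewise, invoking Proposition \ref{1coxFE} at the end is redundant: the estimate you obtain is itself a fundamental estimate with empty exceptional set, so $H^{0,j}_{(2)}(X_c,L^k\otimes E)=\cH^{0,j}(X_c,L^k\otimes E)=0$ follows immediately.
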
	
\begin{proof} 
	We can shrink $X_c$ with $K\subset X_c$ such that $(L,h^L)$ is $p$-positive with respect to $\omega$ on the closure $\ov X_c$. By (\ref{D:q-semi-local}), there exists $C_L>0$ such that
	\be
	\langle R^L(w_i, \ov w_j) \ov w^j \wedge i_{\ov w_i} s,s \rangle _h\geq C_L |s|^2_h
	\ee
	for any $s\in B^{0,j}(X_c, F)$ with arbitrary holomorphic line bundle $F$ and $j\geq p$.
	Thus there exists $C_2>0$, for each $s\in B^{0,j}(X_c,L^k\otimes E)$ with $j\geq \max\{p,q\}$ and $k$ sufficiently large,
	\begin{equation}
	\|s\|^2\leq \frac{C_2}{k}( \|\ddbar^E_k s\|^2+\|\ddbar^{E*}_k s\|^2 )
	\end{equation}   
	 holds with respect to $h^L$ and $\omega$ as in \cite[Lemma 3.5.4]{MM}, and thus it holds for $s\in \cH^{0,j}(X_c,L^k\otimes E)$ with $j\geq \max\{p,q\}$. Then, for $k$ sufficiently large, 
	$H^j(X,L^k\otimes E)\cong \cH^{0,j}(X_c,L^k\otimes E)=0$ with $j\geq \max\{p,q\}$.
\end{proof}    

\begin{rem}[Complex spaces]
	Let $X$ be a $j$-convex K\"{a}hler manifold with $\dim X=n$ and 
	$1\leq j\leq n$. Let $(L,h^L)$ be a holomorphic Hermitian line bundle and 
	$(L,h^L)\geq 0$ on $X$. Let $S$ be a complex space and $f: X\rightarrow S$ 
	a proper surjective holomorphic map. Then, by Theorem \ref{thm_main} and \cite{Mats:16},
$\dim H^p(S, R^qf_*(K_X\otimes L^k))= O(k^{n-p-q})$ for all $(p,q)$ with $p+q\geq j$,  
		where $R^qf_*(\cdot)$ is the $q$-th higher direct image sheaf. 
\end{rem}

\subsection{Pseudoconvex, weakly $1$-complete, and complete manifolds}
  
Analogue to the case of $q$-convex manifolds, we can generalize other results in \cite{Wh:17} as follows. Holomorphic Morse inequalities for weakly $1$-complete manifolds and pseudoconvex domain were obtained in \cite{M:92} and \cite[Theorem 3.5.10, 3.5.12]{MM}.
\begin{thm}\label{pcoxthm}
	Let $M\Subset X$ be a smooth (weakly) pseudoconvex domain in a complex manifold $X$ of dimension $n$. Let $\omega$ be a Hermitian metric on $X$. Let $(L,h^L)$ and $(E,h^E)$ be holomorphic Hermitian line bundles on $X$.  
	Let $1\leq q\leq n$. Assume $(L,h^L)$ is Nakano $q$-semipositive with respect to $\omega$ on $M$, and $(L,h^L)$ is Nakano $q$-positive with respect to $\omega$ in a neighbourhood of $bM$.
	Then there exists $C>0$ such that for sufficiently large $k$, we have
	\begin{equation}
	\dim H^{0,j}_{(2)}(M,L^k\otimes E)\leq Ck^{n-j}\quad \mbox{for all}~ q\leq j\leq n.
	\end{equation} 
\end{thm}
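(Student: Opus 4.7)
My plan is to reduce Theorem \ref{pcoxthm} to Theorem \ref{thm_L2general} in the same spirit as the proof of Theorem \ref{T:qconvex}, but with the role of the $q$-convex exhaustion replaced by the defining function $\rho$ of $M$ and the pseudoconvexity of $bM$. Specifically, after the passage $H^{0,j}(\overline{M}, L^k\otimes E)\cong H^{0,j}_{(2)}(M, L^k\otimes E)$ (which uses Kohn's regularity for $\square^E_k$ with $\ddbar$-Neumann boundary conditions), the theorem follows once we know that the fundamental estimate holds in bidegree $(0,j)$ for $j\geq q$ on $M$ with respect to $\omega$, $h^L$, $h^E$ for all $k$ large, with an exceptional set contained in a compact subset $K'\Subset M$.

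To establish the fundamental estimate, I would work with $B^{0,j}(M, L^k\otimes E)$ and apply the Bochner-Kodaira-Nakano-Morrey-Kohn-H\"ormander formula with boundary term, which for $s\in B^{0,j}(M, L^k\otimes E)\cong B^{0,j}(M, (L^k\otimes E\otimes K_X^*)\otimes K_X)$ yields, in the notation of \cite{MM}, an inequality of the form
\begin{equation}
\|\ddbar^E_k s\|^2+\|\ddbar^{E*}_k s\|^2\;\geq\;k\left\langle [\sqrt{-1}R^{(L,h^L)},\Lambda]s,s\right\rangle_h - C\|s\|^2 + \int_{bM}\cL_{\rho}(s,s)\,dv_{bM},
\end{equation}
modulo the standard rewriting using $K_X$. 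Pseudoconvexity of $bM$ makes $\int_{bM}\cL_\rho(s,s)\,dv_{bM}\geq 0$, so this term can be discarded. On the interior, Nakano $q$-semipositivity of $(L,h^L)$ together with Proposition \ref{prop_refine_semip} makes the curvature term nonnegative for all $j\geq q$. Let $U$ be a neighborhood of $bM$ on which $(L,h^L)$ is Nakano $q$-positive with respect to $\omega$; by shrinking and using compactness of $\overline{bM}$ there exists $c_L>0$ such that $\langle [\sqrt{-1}R^{(L,h^L)},\Lambda]s,s\rangle_h\geq c_L|s|^2$ pointwise on $U$ for any $s\in\Omega^{0,j}(M,L^k\otimes E)$ with $j\geq q$ (cf.\ the computation in the proof of Theorem \ref{T:qvsh}). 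Choosing a compact $K'\Subset M$ with $M\setminus K'\subset U$ and splitting the curvature integral over $K'$ and $M\setminus K'$ gives
\begin{equation}
\|\ddbar^E_k s\|^2+\|\ddbar^{E*}_k s\|^2\;\geq\;(kc_L-C)\|s\|^2-(kc_L+C)\int_{K'}|s|^2\,dv_X,
\end{equation}
which for $k$ large enough is precisely the fundamental estimate with exceptional set $K'$.

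Once this is in hand, the concentration condition in bidegree $(0,j)$ with exceptional set $K'$ holds on $M$ for large $k$, and on a neighborhood of $K'$ the bundle $(L,h^L)$ remains Nakano $q$-semipositive, so Theorem \ref{thm_L2general} applied to the Hermitian manifold $(M,\omega)$ delivers
\begin{equation}
\dim H^{0,j}_{(2)}(M,L^k\otimes E)=\dim\cH^{0,j}(M,L^k\otimes E)\leq C k^{n-j}\qquad\text{for all }q\leq j\leq n,
\end{equation}
which is the desired bound.

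The main obstacle is the careful bookkeeping of constants in the boundary Bochner-Kodaira-Nakano formula so that the $k$-dependent lower bound on the curvature term near $bM$ absorbs both the constant $C$ coming from the torsion of $\omega$ and the interior contribution where only semipositivity (not strict positivity) is available; pseudoconvexity is essential here since it suppresses the boundary integral that would otherwise require separate control.
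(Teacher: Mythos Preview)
Your proposal is correct and follows essentially the same route as the paper: establish the fundamental estimate on $M$ via the Bochner--Kodaira--Nakano formula with boundary term \cite[Corollary 1.4.22]{MM}, discard the Levi boundary integral by pseudoconvexity, exploit Nakano $q$-positivity near $bM$ to get a strictly positive lower bound on the curvature term there, and then invoke Theorem~\ref{thm_L2general} together with Proposition~\ref{prop_refine_semip}.

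Two minor remarks. First, your aside about the isomorphism $H^{0,j}(\overline{M},L^k\otimes E)\cong H^{0,j}_{(2)}(M,L^k\otimes E)$ via Kohn regularity is unnecessary here, since the statement is already about $L^2$-cohomology; the paper does not invoke it either. Second, your derivation of the fundamental estimate is slightly more streamlined than the paper's: you apply the boundary BKN inequality to an arbitrary $s\in B^{0,j}(M,L^k\otimes E)$ and split the curvature integral over $K'$ and $M\setminus K'$ directly, whereas the paper first proves the estimate for sections supported in a collar $X_v\setminus\overline{X}_u$ and then passes to general $s$ by a cutoff argument (as in \cite[Proposition~3.8]{Wh:17}). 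Both are valid; your shortcut works because Nakano $q$-semipositivity on all of $M$ makes the curvature contribution over $K'$ nonnegative, so no additional patching is needed.
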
 
\begin{proof}
	We follow \cite[Theorem 1.5, (3.29)]{Wh:17} and \cite[Theorem 3.5.10]{MM}.	
Let $\rho\in \cC^\infty(X,\R)$ be a defining function of $M$ such that $M=\{x\in X:\rho(x)< 0  \}$ with $|d\rho|=1$ on the boundary $bM$. Let $x\in bM$.  For $s\in \Omega^{0,\bullet}(\ov M,L^k\otimes E)$,
the Levi form defined by $ \cL_\rho(s,s)(x)
:=\sum_{j,k=2}^n(\dbar\ddbar\rho)(w_k,\ov{w}_j)\langle \ov{w}^j\wedge i_{\ov{w}_k}s(x),s(x)\rangle_h$. Since $M$ is pseudoconvex, it follows that, for $s\in B^{0,q}( M,L^k\otimes E)$,
\begin{equation}
\int_{bM} \cL_\rho(s,s)dv_{bM}\geq 0.
\end{equation}
Let $X_c:=\{ x\in X:\rho(x)<c \}$ for $c\in \R$.
We fix $u<0<v$ such that $L$ is Nakano $q$-positive with respect to $\omega$ on a open neighbourhood of $X_v\setminus \ov{X}_u$, then there exists $C_L>0$ such that for any holomorphic Hermitian vector bundle $(F,h^F)$ on $X$,
\begin{equation}
\langle R^L(w_l,\ol{w}_k)\ol{w}^k\wedge i_{\ol{w}_l}s,s \rangle_h\geq C_L|s|^2, \quad s\in \Omega^{0,q}_0(X_v\setminus \ov{X}_u,F).
\end{equation}
 
By the Bochner-Kodaira-Nakano formula with boundary term \cite[Corollary 1.4.22]{MM}, there exist $C_4\geq 0$ and $C_5\geq 0$ such that for any $s\in B^{0,q}(M,L^k\otimes E)$ with $\supp (s)\in X_v\setminus\ov{X}_u$,
\begin{eqnarray}\nonumber
\frac{3}{2}(\|\dbar^E_k s\|^2+\|\dbar^{E*}_k s\|^2)
&\geq& \langle  R^{L^k\otimes E\otimes K^*_X}(w_j,\ov{w}_k)\ov{w}^k\wedge i_{\ov{w}_j} s,s\rangle+\int_{bM}\cL_{\rho}(s,s)dv_{bM}-C_4\|s\|^2\\
&\geq & \int_M(kC_L-C_5-C_4)|s|^2dv_X.
\end{eqnarray}
For any $k\geq k_0:=[2\frac{C_4+C_5}{C_L}]+1$, we have $C_L-\frac{C_4+C_5}{k}\geq \frac{1}{2}C_L$. Let $C_2:=\frac{3}{C_L}$. For any $s\in B^{0,q}(M,L^k\otimes E)$ with $\supp(s)\subset X_v\setminus \overline{X}_u$ and $k\geq k_0>0$, we have
\begin{equation}
\|s\|^2\leq \frac{C_2}{k}( \|\ddbar^E_k s\|^2+\|\ddbar^{E*}_k s\|^2 )
\end{equation}   
where the $L^2$-norm $\|\cdot\|$ is given by $\omega$, $h^{L^k}$ and $h^E$ on $M$. 

Note the fact that $B^{0,q}(M,L^k\otimes E)$ is dense in $\Dom(\ddbar^E_k)\cap \Dom(\ddbar^{E*}_{k,H})\cap L^2_{0,q}(M,L^k\otimes E)$ with respect to the graph norm of $\ddbar^E_k+\ddbar^{E*}_{k,H}$.
Following the same argument in Lemma \ref{1coxFE} (without the modification of $h^L$ by $\chi$), we conclude that
there exist a compact subset $K'\subset M$ and $C_0>0$ such that for sufficiently large $k$, we have 
\begin{equation}
\|s\|^2\leq \frac{C_0}{k}(\|\ddbar^E_ks\|^2+\|\ddbar^{E*}_{k,H} s\|^2)+C_0\int_{K'} |s|^2 dv_X
\end{equation}
for any $s\in \Dom(\ddbar^E_k)\cap \Dom(\ddbar^{E*}_{k,H})\cap L^2_{0,q}(M,L^k\otimes E)$,
where the $L^2$-norm is given by $\omega$, $h^{L^k}$ and $h^E$ on $M$. 
 That is, the fundamental estimate holds in bidegree $(0,q)$ for forms with values in $L^k\otimes E$ for large $k$. Finally, we apply Theorem \ref{thm_L2general} and Proposition \ref{prop_refine_semip}.
\end{proof}
The polynomial growth of dimension of cohomology of Griffiths $q$-positive 
line bundles on weakly $1$-complete manifolds via holomorphic Morse inequalities, we
refer to \cite{M:92}.
For the Nakano $q$-positive cases, by applying Theorem \ref{pcoxthm} as in 
\cite[Proof of Theorem 1.6]{Wh:17}, we obtain:

\begin{cor}\label{weak1thm}
	Let $X$ be a weakly $1$-complete manifold of dimension $n$ with 
	a smooth plurisubharmonic exhaustion function $\rho$ and $\omega$ 
	be a Hermitian metric on $X$. Let $(L,h^L)$ and $(E,h^E)$ be holomorphic 
	Hermitian line bundles on $X$.
	Let $1\leq q\leq n$ and $(L,h^L)$ is Nakano $q$-semipositive with respect to $\omega$ on $X$.

	$(1)$ Assume $(L,h^L)$ is Nakano $q$-positive with respect to 
	$\omega$ on $X\setminus K$ for a compact subset $K$.
	Then, for any sublevel set $X_c:=\{ \rho<c \}$ with smooth boundary 
	and $K\subset X_c$, there exists $C>0$ such that for $k$ sufficiently large,
	\begin{equation}
	\dim H^{0,j}_{(2)}(X_c,L^k\otimes E)\leq Ck^{n-j} \quad\mbox{for all}~ q\leq j\leq n. 
	\end{equation}  
	
	$(2)$ Assume $(L,h^L)$ is positive on $X\setminus K$ for a compact subset $K$.
	Then there exists $C>0$ such that for $k$ sufficiently large,
	\begin{equation}
	\dim H^j(X,L^k\otimes E)\leq Ck^{n-j} \quad\mbox{for all}~ q\leq j\leq n. 
	\end{equation}  
\end{cor}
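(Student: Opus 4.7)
The plan is to reduce both parts of Corollary \ref{weak1thm} to Theorem \ref{pcoxthm} by viewing a suitable sublevel set $X_c$ as a pseudoconvex domain in $X$.

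For part $(1)$, I would apply Theorem \ref{pcoxthm} directly with $M = X_c$. Since $\rho$ is plurisubharmonic, every sublevel set $X_c$ with smooth boundary is a smooth (weakly) pseudoconvex domain. The global Nakano $q$-semipositivity of $(L,h^L)$ on $X$ restricts to $q$-semipositivity on $X_c$. Because $K \subset X_c$ is compact and $K \cap bX_c = \emptyset$, the boundary $bX_c$ sits inside $X \setminus K$, where $(L,h^L)$ is Nakano $q$-positive; thus Nakano $q$-positivity holds on an open neighborhood of $bX_c$. All hypotheses of Theorem \ref{pcoxthm} are met, and its conclusion is exactly the desired bound.

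For part $(2)$, the first observation is that usual positivity of $(L,h^L)$ on $X \setminus K$ implies Nakano $j$-positivity there for every $1 \leq j \leq n$, by Proposition \ref{P:semipos} together with Remark \ref{rem_j_pos}. So part $(1)$ applies and gives $\dim H^{0,j}_{(2)}(X_c, L^k\otimes E) \leq Ck^{n-j}$ for $q \leq j \leq n$ and every smooth sublevel set $X_c \supset K$. It remains to transfer this bound from $L^2$-cohomology on $X_c$ to sheaf cohomology on $X$ via the chain
\[
H^j(X, L^k\otimes E) \;\cong\; H^j(X_c, L^k\otimes E) \;\cong\; H^{0,j}(X_c, L^k\otimes E) \;\cong\; H^{0,j}_{(2)}(X_c, L^k\otimes E),
\]
for $c$ sufficiently large and $j \geq q$. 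The middle Dolbeault isomorphism is standard, and the last one follows from the fundamental estimate on $X_c$ that is established inside the proof of Theorem \ref{pcoxthm} (which forces the $L^2$-Dolbeault and reduced $L^2$-Dolbeault cohomology to coincide with smooth Dolbeault cohomology on the relatively compact $X_c$).

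The main obstacle is justifying the first isomorphism $H^j(X, L^k\otimes E) \cong H^j(X_c, L^k\otimes E)$ for large $c$. The approach is to exploit the strict positivity of $(L,h^L)$ on $X \setminus K$: replacing $h^L$ by $h^L e^{-\chi(\rho)}$ for a convex increasing $\chi$ preserves the Nakano $q$-semipositivity globally and the strict positivity outside $K$ (since $\sqrt{-1}\partial\bar\partial\rho \geq 0$ and $\sqrt{-1}\partial\rho\wedge\bar\partial\rho \geq 0$), and can be arranged to make the modified curvature uniformly positive at infinity. One then invokes the Nakano-Ohsawa-type finiteness theorem on weakly $1$-complete manifolds together with an Andreotti-Grauert-style stabilization argument, as in \cite[Proof of Theorem 1.6]{Wh:17} and \cite[Theorem 3.5.10]{MM} (see also \cite{M:92}), to conclude that $H^j(X, L^k\otimes E)$ is finite-dimensional and isomorphic to $H^j(X_c, L^k\otimes E)$ for all sufficiently large $c$. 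Combined with part $(1)$, this yields the stated polynomial estimate.
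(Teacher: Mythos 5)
Your proposal is correct and follows essentially the same route as the paper: part (1) is exactly the paper's argument (smooth sublevel sets of a plurisubharmonic exhaustion are weakly pseudoconvex, the boundary lies in $X\setminus K$ where $L$ is Nakano $q$-positive, so Theorem \ref{pcoxthm} applies), and part (2) is obtained, as in the paper, from part (1) together with the isomorphism $H^j(X,L^k\otimes E)\cong H^{0,j}_{(2)}(X_c,L^k\otimes E)$ for $j\geq q$ and large $k$. The only difference is that you spell out the justification of this isomorphism (metric modification plus the Andreotti--Grauert/Ohsawa-type stabilization from \cite{Wh:17} and \cite[Theorem 3.5.10]{MM}), which the paper simply asserts with the same references.
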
 

\begin{proof}
	(1) is from $X_c$ is a smooth pseudoconvex domain and Theorem \ref{pcoxthm}; (2) follows from (1) and 
	$H^j(X,L^k\otimes E)\cong H^{0,j}_{(2)}(X_c,L^k\otimes E)$ for all $j\geq q$ and sufficiently large $k$.
\end{proof}

Similarly, we also can refine \cite[Theorem 1.2]{Wh:17} on complete manifolds.
\begin{thm}\label{completethm}
	Let $(X,\omega)$ be a complete Hermitian manifold of dimension $n$. Let $(L,h^L)$ be a holomorphic Hermitian line bundle on $X$. Assume there exists a compact subset $K\subset X$ such that $\sqrt{-1}R^{(L,h^L)}=\omega$ on $X\setminus K$.
	Let $1\leq q\leq n$ and $(L,h^L)$ is Nakano $q$-semipositive with respect to $\omega$ on $K$.
	Then there exists $C>0$ such that for sufficiently large $k$, we have 
	\begin{equation}
	\dim H_{(2)}^{0,j}(X,L^k\otimes K_X)\leq Ck^{n-j}\quad \mbox{for all}~ q\leq j\leq n.
	\end{equation}
\end{thm}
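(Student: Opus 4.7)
The plan is to verify that the fundamental estimate holds in bidegree $(0,j)$ for forms with values in $L^k\otimes K_X$ for each $q\leq j\leq n$ and sufficiently large $k$, then apply Theorem~\ref{thm_L2general} with $E=K_X$. I first note that the Nakano $q$-semipositivity hypothesis extends automatically to all of $X$ and to all degrees $j\geq q$: on $K$, Proposition~\ref{prop_refine_semip} gives Nakano $j$-semipositivity for $j\geq q$; on $X\setminus K$, the identity $\sqrt{-1}R^L=\omega$ makes $(L,h^L)$ strictly positive, and Proposition~\ref{P:semipos} then yields Nakano $j$-semipositivity there. Hence the positivity hypothesis of Theorem~\ref{thm_L2general} is satisfied on any neighborhood of $K$.

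Since $(X,\omega)$ is complete, $\Omega_0^{0,j}(X,L^k\otimes K_X)$ is dense in $\Dom(\ddbar^E_k)\cap\Dom(\ddbar^{E*}_k)$ with respect to the graph norm, so it suffices to prove the fundamental estimate for smooth compactly supported test forms. Under the canonical identification $\Omega^{0,j}(X,L^k\otimes K_X)\cong\Omega^{n,j}(X,L^k)$, I apply the Bochner-Kodaira-Nakano formula for $(n,j)$-forms with values in a line bundle on a Hermitian manifold to obtain
\begin{equation*}
\|\ddbar s\|^2+\|\ddbar^* s\|^2\geq k\langle[\sqrt{-1}R^L,\Lambda]s,s\rangle-C_1\|s\|^2
\end{equation*}
for $s\in\Omega_0^{n,j}(X,L^k)$, where the constant $C_1$ arises from the torsion of $\omega$ and is independent of $k$.

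Next I split the curvature integrand over $K$ and $X\setminus K$. On $K$, Nakano $j$-semipositivity yields $\langle[\sqrt{-1}R^L,\Lambda]s,s\rangle\geq 0$ pointwise. On $X\setminus K$, the hypothesis $\sqrt{-1}R^L=\omega$ together with the standard identity $[\omega,\Lambda]=(p+q-n)\Id$ on $(p,q)$-forms shows that $[\sqrt{-1}R^L,\Lambda]$ acts on $(n,j)$-forms as multiplication by $j$. Combining these observations,
\begin{equation*}
\|\ddbar s\|^2+\|\ddbar^* s\|^2\geq kj\|s\|^2-kj\int_K|s|^2\,dv_X-C_1\|s\|^2.
\end{equation*}
Choosing $k$ so large that $kj-C_1\geq kj/2$, rearrangement gives the fundamental estimate
\begin{equation*}
\|s\|^2\leq \tfrac{2}{kj}\bigl(\|\ddbar s\|^2+\|\ddbar^* s\|^2\bigr)+2\int_K|s|^2\,dv_X
\end{equation*}
with exceptional compact set $K$ and constants uniform in $k$.

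With the fundamental estimate established, Theorem~\ref{thm_L2general} applied to $E=K_X$ (equipped with the Hermitian metric induced by $\omega$) yields $\dim H^{0,j}_{(2)}(X,L^k\otimes K_X)\leq Ck^{n-j}$ for $q\leq j\leq n$ and all sufficiently large $k$. The main technical obstacle is ensuring that the torsion contribution $C_1\|s\|^2$ in the Bochner-Kodaira-Nakano formula is genuinely zeroth-order in $s$ and independent of $k$; this is where working with the $(n,j)$-form formulation is essential, because the torsion correction then depends only on $\omega$ and is dominated by the $kj$ curvature term for large $k$.
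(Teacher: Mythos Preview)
Your argument is correct in outline and reaches the same conclusion as the paper, but the route differs. The paper does \emph{not} apply a global Bochner--Kodaira--Nakano inequality with a torsion remainder $C_1\|s\|^2$; instead it works only on $X\setminus K$, where the hypothesis $\sqrt{-1}R^L=\omega$ forces $\omega$ to be K\"ahler (as a curvature form it is $d$-closed), so the torsion-free Nakano inequality $\|\ddbar s\|^2+\|\ddbar^* s\|^2\geq k\langle[\sqrt{-1}R^L,\Lambda]s,s\rangle\geq k\|s\|^2$ holds for $s\in\Omega^{n,j}_0(X\setminus K,L^k)$. The paper then globalizes by a cutoff $\phi=1-\xi$ with $\xi\equiv 1$ near $K$, applies the estimate to $\phi s$, and absorbs the commutator terms $[\ddbar,\phi]$, $[\ddbar^*,\phi]$ (bounded independently of $k$) to obtain the fundamental estimate with exceptional set $K'=\overline{U}\supsetneq K$.

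Your more direct approach---global BKN plus splitting the curvature integral---also works, but the justification you give for the torsion constant $C_1$ is aimed at the wrong issue. The real danger is not whether the torsion term is zeroth order or $k$-independent (it is, by the structure of the formula), but whether $C_1:=\sup_X|\text{torsion}|$ is \emph{finite} on the noncompact manifold $X$. This is exactly where the hypothesis $\sqrt{-1}R^L=\omega$ on $X\setminus K$ enters: it makes $\omega$ K\"ahler outside $K$, so the torsion of $\omega$ is supported in the compact set $K$ and hence bounded. Once you state this, your argument goes through (up to the harmless factor $\tfrac{3}{2}$ on the left-hand side that the Hermitian BKN inequality of \cite[Corollary~1.4.22]{MM} actually carries). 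The paper's cutoff method sidesteps the torsion bound entirely by never leaving the K\"ahler region; your method is shorter but needs this one extra sentence.
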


\begin{proof}
	Since $(X,\omega)$ is complete, $\ddbar^{E*}_{k,H}=\ddbar^{E*}_k$ for arbitrary holomorphic Hermitian vector bundle $(E,h^E)$. In a local orthonormal frame $\{ \omega_j \}_{j=1}^n$ of $T^{(1,0)}X$ with dual frame $\{ w^j\}_{j=1}^n$ of $T^{(1,0)*}X$, $\omega=\sqrt{-1}\sum_{j=1}^n \omega^j\wedge\ov\omega^j$ and $\Lambda=-\sqrt{-1}i_{\ov w_j}i_{w_j}$. Thus $\sqrt{-1}R^{(L,h^L)}=\sqrt{-1}\sum_{j=1}^n \omega^j\wedge\ov\omega^j$ outside $K$. Let $\{e_k\}$ be a local frame of $L^k$. For $s\in \Omega^{n,q}_0(X\setminus K,L^k)$, we can write $s=\sum_{|J|=q} s_J\omega^1\wedge\cdots\wedge\omega^n\wedge\ov\omega^J\otimes e_k$ locally, thus
	\begin{equation}
		[\sqrt{-1}R^L,\Lambda]s
		=\sum_{|J|=q}(q s_J\omega^1\wedge\cdots\wedge\omega^n\wedge\ov\omega^J)\otimes e_k 
		= qs.
	\end{equation}
	Since $(X\setminus K, \sqrt{-1}R^{(L,h^L)})$ is K\"{a}hler, we apply Nakano's inequality \cite[(1.4.52)]{MM}, 
	\begin{equation}
		\|\ddbar_k s\|^2+\|\ddbar^{*}_k s\|^2 
		\geq k \langle  [\imat R^L,\Lambda]s,s \rangle\geq  qk\|s\|^2\geq k\|s\|^2.
	\end{equation}
	Therefore, we have
	$	\|s\|^2\leq \frac{1}{k}( \|\ddbar_k s\|^2+\|\ddbar^{*}_k s\|^2 )$
	for $s\in \Omega^{n,q}_0(X\setminus K,L^k)$ with $1\leq q\leq n$ with respect to $h^L$ and $\omega$.
	
	Next we follow the analogue argument in \cite[Proposition 3.8]{Wh:17} to obtain the fundamental estimates as follows. Let $V$ and $U$ be open subsets of $X$ such that $K\subset V\Subset U\Subset X$. We choose a function $\xi\in \cC^\infty_0(U,\R)$ such that $0\leq \xi\leq 1$ and $\xi\equiv 1$ on $\ov V$. We set $\phi:=1-\xi$, thus $\phi\in \cC^\infty(X,\R)$ satisfying $0\leq \phi \leq 1$ and $\phi \equiv 0$ on $\ov V$. 
	
	Now let $s\in \Omega_0^{n,q}(X,L^k)$, thus $\phi s\in \Omega^{n,q}_0(X\setminus K, L^k)$. We set $K':=\ov U$, then 
	\begin{equation}
		\|\phi s\|^2\geq \|s\|^2-\int_{K'}|s|^2dv_X,
	\end{equation}
	and similarly there exists a constant $C_1>0$ such that	
	\begin{equation}
		\frac{1}{k}(\|\ddbar_k (\phi s)\|^2+\|\ddbar^{*}_k(\phi s)\|^2)\leq \frac{5}{k}(\|\ddbar_k s\|^2+\|\ddbar^{*}_k s\|^2)+\frac{12C_1}{k}\|s\|^2.
	\end{equation}	
	
	By combining the above three inequalities, there exists $C_0>0$ such that for any $s\in \Omega^{n,q}_0(X,L^k)=\Omega^{0,q}_0(X,L^k\otimes K_X)$ and $k$ large enough
	\begin{equation}   
		\|s\|^2\leq \frac{C_0}{k}(\|\ddbar_ks\|^2+\|\ddbar^{*}_ks\|^2)+C_0\int_{K'} |s|^2 dv_X.
	\end{equation}	
	Finally, since $\Omega_0^{0,\bullet}(X,L^k\otimes K_X)$ is dense in $\Dom(\ddbar_k^{K_X})\cap \Dom(\ddbar_k^{K_X*})$ in the graph-norm, the fundamental estimate holds in bidegree $(0,q)$ for forms with values in $L^k\otimes K_X$ for $k$ large.
	So the conclusion follows from Theorem \ref{thm_L2general} and Proposition \ref{prop_refine_semip}.
\end{proof}

\subsection{Vanishing theorems and the estimate $O(k^{n-q})$} \label{sec_Kahler_vanish}

In this section, we restrict to K\"{a}hler manifolds $X$ and $E=K_X$. Firstly, inspired by \cite{Mat:18JAG,Fujino:12}, we see the injectivity for Nakano $q$-semipositive line bundles.

\begin{lemma}\label{lem_inj}
	Let $(X,\omega)$ be a compact K\"{a}hler manifold of dimension $n$ and let $(L,h^L)$ be holomorphic Hermitian line bundle on $X$. Let $1\leq q\leq n$ and $(L,h^L)$ be Nakano $q$-semipositive with respect to $\omega$ on $X$. Let $s\in H^0(X,L^k)\setminus\{0\}$ for some $k>0$. Then, for every $j\geq q$ and $m\geq 1$, the multiplication map $\cdot\otimes s:$
	\be
	H^j(X,K_X\otimes L^m)\rightarrow H^j(X,K_X\otimes L^{m+k})
	\ee
	is injective. In particular, if $(L,h^L)$ is semipositive, it holds for all $j\geq 1$.
\end{lemma}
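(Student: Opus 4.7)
The plan is to prove this injectivity in the spirit of Enoki--Takegoshi: realize a class in $H^j(X,K_X\otimes L^m)$ by its harmonic representative, exploit Nakano $q$-semipositivity through the Bochner--Kodaira--Nakano (BKN) identity to extract strong rigidity of the representative, then verify that multiplication by the holomorphic section $s$ preserves harmonicity, and finally invoke Hodge orthogonality.

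Fix $j\geq q$. On the compact K\"{a}hler manifold $X$, the Dolbeault and Hodge theorems identify $H^j(X,K_X\otimes L^m)\cong H^{n,j}(X,L^m)\cong \cH^{n,j}(X,L^m)$, so let $u\in\cH^{n,j}(X,L^m)$ be the harmonic representative of the given class. Applying the BKN identity on $L^m$ and pairing with $u$,
\begin{equation*}
0 \;=\; \|\bar\partial u\|^2+\|\bar\partial^{*} u\|^2 \;=\; \|\nabla' u\|^2 + \|\nabla'^{*} u\|^2 + m\int_X\bigl\langle[\sqrt{-1}R^L,\Lambda]u,u\bigr\rangle_h\,dv_X,
\end{equation*}
where $\nabla'$ denotes the $(1,0)$-part of the Chern connection. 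Since $j\geq q$, Proposition \ref{prop_refine_semip} ensures the integrand is pointwise non-negative, so each non-negative summand must vanish: I obtain $\nabla' u\equiv 0$, $\nabla'^{*}u\equiv 0$, and the pointwise identity $\langle[\sqrt{-1}R^L,\Lambda]u,u\rangle_h\equiv 0$ on $X$.

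Next I argue $su\in \cH^{n,j}(X,L^{m+k})$. Holomorphy of $s$ gives $\bar\partial(su)=(\bar\partial s)u+s\,\bar\partial u=0$. Applying BKN on $L^{m+k}$ to $su$,
\begin{equation*}
\|\bar\partial(su)\|^2+\|\bar\partial^{*}(su)\|^2 \;=\; \|\nabla'(su)\|^2 + \|\nabla'^{*}(su)\|^2 + (m+k)\!\int_X\!\bigl\langle[\sqrt{-1}R^L,\Lambda](su),su\bigr\rangle_h dv_X.
\end{equation*}
The form $\nabla'(su)$ vanishes for bidegree reasons, as $\nabla'$ adds a $(1,0)$-factor and $su$ is already of maximal holomorphic degree $(n,\cdot)$; the K\"{a}hler commutator identity relating $[\bar\partial,\Lambda]$ with $\nabla'^{*}$, together with $\bar\partial s=0$, $\bar\partial u=0$ and $\nabla'^{*}u=0$, gives $\nabla'^{*}(su)=0$; and the tensorial action of $[\sqrt{-1}R^L,\Lambda]$ on the form degrees makes the last integrand pointwise equal to $|s|^2_{h^{L^k}}\langle[\sqrt{-1}R^L,\Lambda]u,u\rangle_h$, which is zero by the previous step. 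Hence $\|\bar\partial^{*}(su)\|^2=0$, so $su$ is harmonic.

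To conclude, if $[su]=0$ in $H^j(X,K_X\otimes L^{m+k})$ then $su=\bar\partial w$ for some $w$, and
\begin{equation*}
\|su\|^2 \;=\; \langle su,\bar\partial w\rangle \;=\; \langle\bar\partial^{*}(su),w\rangle \;=\; 0,
\end{equation*}
so $su\equiv 0$ on $X$. Since $s\in H^0(X,L^k)\setminus\{0\}$, its zero locus is a proper analytic subvariety; hence $u$ vanishes on its dense open complement and, by smoothness, on all of $X$, giving $[u]=0$. The ``in particular'' clause follows from Propositions \ref{P:1semipos} and \ref{prop_refine_semip}, which identify semipositivity with Nakano $1$-semipositivity and upgrade it to Nakano $j$-semipositivity for every $j\geq 1$, so the same argument applies in every bidegree. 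The main obstacle is the verification $\bar\partial^{*}(su)=0$: it is not a direct consequence of $\bar\partial^{*}u=0$, but rather crucially exploits the pointwise curvature vanishing obtained from the first BKN step, so one must combine the K\"{a}hler identities with the harmonicity of $u$ carefully and watch the sign conventions to avoid an apparent nonzero residue.
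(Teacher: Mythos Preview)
Your proof is correct and follows essentially the same route as the paper's, which explicitly credits Enoki's argument: use Hodge theory to pass to harmonic $(n,j)$-representatives, apply the Bochner--Kodaira--Nakano identity to force the pointwise vanishing $\langle[\sqrt{-1}R^L,\Lambda]u,u\rangle_h=0$ and $\nabla'^{*}u=0$, then use the K\"{a}hler identity $\nabla'^{*}=i[\bar\partial,\Lambda]$ together with $\bar\partial s=0$ to get $\nabla'^{*}(su)=s\,\nabla'^{*}u=0$, whence $\bar\partial^{*}(su)=0$ via BKN again. The only cosmetic difference is that the paper cites \cite[(1.4.44),(1.4.38c)]{MM} and \cite[Ch.VII.3.(2.4)]{Dem} for the two key steps you spell out directly.
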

\begin{proof}
	We follow \cite[1.5 Enoki's proof]{Fujino:12}. By Proposition \ref{prop_refine_semip} and Hodge theorem, we only need to show the multiplication map $\cdot\otimes s$ between the harmonic spaces
	\be
	\cH^{n,q}(X, L^m)\rightarrow \cH^{n,q}(X, L^{m+k})
	\ee
	is injective for $m\geq 1$. Let $u\in \cH^{n,q}(X, L^m)$. Since $s\in H^0(X,L^k)$, $\ddbar^{L^{m+k}}(s\otimes u)=0$. From the $q$-semipositive and Nakano's inequality \cite[(1.4.51)]{MM},
	$\left\langle[\sqrt{-1}R^{(L,h^L)},\Lambda]u,u\right\rangle_h=0$ on $X$. 
	From \cite[(1.4.44),(1.4.38c)]{MM}, 
	$(\nabla^{L^m})^{1,0*}(s\otimes u)=s\otimes((\nabla^{L^m})^{1,0*}u)=0$. 
	Also we have 
	$\left\langle[\sqrt{-1}R^{L^{m+k}},\Lambda](s\otimes u),(s\otimes u)\right\rangle_h=0$. 
	Thus $$\|\ddbar^{L^{m+k}*}(s\otimes u)\|^2=
	\|(\nabla^{L^{m+k}})^{1,0*}(s\otimes u)\|^2=0.$$ 
	We obtain $s\otimes u\in \cH^{n,q}(X, L^{m+k})$. 
	Suppose $s\otimes u=0$ on $X$. Since $s\neq 0$ and 
	\cite[Ch.VII.3. (2.4) Lemma]{Dem}, $u=0$ on $X$.
\end{proof}

	Let $\kappa(L)$ be the Kodaira dimension of $L$ on a a compact complex manifold $X$  given by
	\be
	\kappa(L)&:=&-\infty, ~\mbox{when}~H^0(X,L^k)=0~\mbox{for all}~ k>0; \mbox{otherwise,}\\
	\kappa(L)&:=&\max\{ m\in \N: \limsup_{k\rightarrow\infty}\frac{\dim H^0(X,L^k)}{k^m}>0 \}\in [0,\dim X].
	\ee	 
By the above lemma and Corollary \ref{thm_covering} with the trivial $\Gamma$, we obtain:

\begin{thm}\label{thm_q_semi_kahler}
	Let $(X,\omega)$ be a compact K\"{a}hler manifold of dimension $n$ and let $(L,h^L)$ be holomorphic Hermitian line bundle on $X$. Let $1\leq q\leq n$ and $(L,h^L)$ be Nakano $q$-semipositive with respect to $\omega$ on $X$. Then, for all $j>\max\{ n-\kappa(L),~q-1 \}$ and $m>0$,
	\be
	H^j(X,K_X\otimes L^m)=0.
	\ee
\end{thm}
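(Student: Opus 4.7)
The plan is to combine the injectivity statement of Lemma \ref{lem_inj} with the polynomial dimension bound of Corollary \ref{thm_covering} (applied with trivial $\Gamma$, so that it reduces to the usual cohomology of the compact Kähler manifold $X$), and play the Kodaira-dimension growth of $H^0(X,L^k)$ against the Berndtsson-type upper bound for $H^j(X, K_X \otimes L^{m+k})$. The argument is essentially a pigeonhole on cohomological dimensions.

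First note that the hypothesis $j > \max\{n-\kappa(L),\, q-1\}$ forces $j \geq q$ and $\kappa(L) > n-j \geq 0$, so in particular $\kappa(L) \geq 0$ and there exist nontrivial sections of positive powers of $L$. Suppose for contradiction that $H^j(X, K_X \otimes L^m) \neq 0$ for some $m > 0$, and fix a nonzero class $u \in H^j(X, K_X \otimes L^m)$. Since $j \geq q$, Lemma \ref{lem_inj} guarantees that for every $k > 0$ and every nonzero $s \in H^0(X, L^k)$ the product $s \otimes u$ is nonzero in $H^j(X, K_X \otimes L^{m+k})$. Consequently the linear map
$$\Psi_k \colon H^0(X, L^k) \longrightarrow H^j(X, K_X \otimes L^{m+k}), \qquad s \longmapsto s \otimes u,$$
is injective, and therefore $\dim H^0(X, L^k) \leq \dim H^j(X, K_X \otimes L^{m+k})$.

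Next, apply Corollary \ref{thm_covering} with trivial $\Gamma$, $E = K_X$, and tensor power $m+k$: there exists a constant $C > 0$, independent of $k$, such that $\dim H^j(X, K_X \otimes L^{m+k}) \leq C(m+k)^{n-j}$ for all $k \geq 1$, using that $j \geq q$ and $(L,h^L)$ is Nakano $q$-semipositive on $X$. Combining with the injectivity of $\Psi_k$ yields $\dim H^0(X, L^k) = O(k^{n-j})$ as $k \to \infty$, so by the very definition of the Kodaira dimension $\kappa(L) \leq n - j$, contradicting the hypothesis $j > n - \kappa(L)$. Hence $H^j(X, K_X \otimes L^m) = 0$.

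I do not foresee any real obstacle, since both ingredients are already established. The only mild points to handle are that the constant in Corollary \ref{thm_covering} must be independent of the tensor power (which is exactly its content) and that one invokes the Dolbeault isomorphism on the compact Kähler $X$ to identify $H^j(X,\cdot)$ with $H^{0,j}(X,\cdot)$ on both sides, so that the analytic bound of Corollary \ref{thm_covering} really controls the sheaf cohomology $H^j(X, K_X \otimes L^{m+k})$ appearing in the injectivity statement.
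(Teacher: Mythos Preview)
Your proof is correct and follows essentially the same route as the paper: assume a nonzero class $u\in H^j(X,K_X\otimes L^m)$, use the injectivity Lemma~\ref{lem_inj} to embed $H^0(X,L^k)$ into $H^j(X,K_X\otimes L^{m+k})$ via $s\mapsto s\otimes u$, and then contradict the definition of $\kappa(L)$ with the $O(k^{n-j})$ bound from Corollary~\ref{thm_covering} (trivial $\Gamma$, $E=K_X$). The paper writes the contradiction as an explicit inequality chain rather than invoking $\kappa(L)\leq n-j$ directly, but the argument is the same.
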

\begin{proof}
	We follow \cite[Theorem 4.5]{Mat:18JAG}.
	 Suppose there exist $m>0$ and $j>n-\kappa(L)$ with $j\geq q$ such that $H^j(X,K_X\otimes L^m)\neq 0$. Let $u\in H^j(X,K_X\otimes L^m)\setminus\{0\}$ and let $\{s_j\}_{i=1}^N\subset H^0(X,L^k)$ be linearly independent. By the injectivity Lemma \ref{lem_inj}, $\{ s_i\otimes u \}_{i=1}^N\subset H^j(X,K_X\otimes L^{m+k})$ are linearly independent.
	By Corollary \ref{thm_covering} for compact K\"{a}hler manifolds, we see
	\be
	\frac{\dim H^0(X,L^k)}{k^{\kappa(L)}}\leq \frac{\dim H^j(X,K_X\otimes L^{k+m})}{k^{n-j+1}}\leq \frac{C(k+m)^{n-j}}{k^{n-j+1}}\leq \frac{C}{k}.
	\ee
	By applying $\limsup_{k\rightarrow+\infty}$, there is a contradiction.
\end{proof}

\begin{cor}
	Let $(X,\omega)$ be a compact K\"{a}hler manifold and let $(L,h^L)$ be a holomorphic Hermitian line bundle. If $\mbox{Tr}_\omega R^{(L^*,h^{L^*})}\geq 0$ on $X$ and $\kappa(L^*)>0$, then $\kappa(L)=-\infty$. In particular, if the scalar curvature $r_\omega= 0$ on $X$ and $\kappa(K_X^*)>0$, then $\kappa(K_X)=-\infty$.
\end{cor}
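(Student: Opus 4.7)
The plan is to apply Theorem \ref{thm_q_semi_kahler} to the dual bundle $L^*$ in the borderline case $q=n$ and then dualize. First I would translate the hypothesis: by definition $\mbox{Tr}_\omega R^{(L^*,h^{L^*})} = \tau(L^*,h^{L^*},\omega)$, and Proposition \ref{prop_tr_pos}(2) says that $\tau(L^*,h^{L^*},\omega) \geq 0$ on $X$ is equivalent to $(L^*,h^{L^*})$ being Nakano $n$-semipositive with respect to $\omega$ at every point of $X$.

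Next I would invoke Theorem \ref{thm_q_semi_kahler} applied to $(L^*,h^{L^*})$ with $q=n$. This yields
\begin{equation}
H^j(X, K_X \otimes (L^*)^m) = 0 \quad \text{for all } m > 0 \text{ and } j > \max\{n-\kappa(L^*),\, n-1\}.
\end{equation}
The Kodaira dimension is integer-valued in $\{-\infty\} \cup \{0,1,\ldots,n\}$, so the assumption $\kappa(L^*) > 0$ forces $\kappa(L^*) \geq 1$ and hence $n - \kappa(L^*) \leq n - 1$; the threshold collapses to $j > n - 1$, leaving only $j = n$. Serre duality on the compact complex manifold $X$ then gives $H^n(X, K_X \otimes (L^*)^m) \cong H^0(X, L^m)^*$, so $H^0(X, L^m) = 0$ for every $m > 0$, which is precisely the defining condition for $\kappa(L) = -\infty$.

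For the special case I would set $L = K_X$. The example in Section \ref{sec_prel} records the identity $2\tau(K_X^*, h_\omega, \omega) = r^X_\omega$, so the hypothesis $r_\omega \equiv 0$ forces $\tau(K_X^*, h_\omega, \omega) \equiv 0 \geq 0$, while $\kappa(K_X^*) > 0$ supplies the second hypothesis of the general case; the conclusion $\kappa(K_X) = -\infty$ is then immediate. There is no substantive obstacle — the entire reduction hinges on identifying the $\omega$-trace condition with the borderline Nakano $n$-semipositivity of $L^*$ through Proposition \ref{prop_tr_pos}, after which Theorem \ref{thm_q_semi_kahler} and Serre duality do all the remaining work.
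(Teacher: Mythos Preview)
Your proof is correct and follows essentially the same route as the paper: both identify the trace hypothesis with Nakano $n$-semipositivity of $L^*$ via Proposition~\ref{prop_tr_pos}(2), apply Theorem~\ref{thm_q_semi_kahler} at $q=n$ to obtain $H^n(X,K_X\otimes L^{*m})=0$, and then use Serre duality $H^0(X,L^m)\cong H^n(X,K_X\otimes L^{*m})$ to conclude $\kappa(L)=-\infty$. Your write-up is simply a more detailed unpacking of the paper's one-line proof.
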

\begin{proof}
$H^0(X,L^m)\cong H^n(X,K_X\otimes L^{m*})=0$ and $r_\omega=2\sum_j Ric(\omega_j,\ov\omega_j)=2\mbox{Tr}_\omega R^{K_X^*}$.
\end{proof}

Secondly, as applications of Bochnar-Kodaira-Nakano formulas, certain Kodaira type vanishing theorems of Nakano $q$-semipositive line bundles hold as follows.
  
\begin{prop}
	Let $(X,\omega)$ be a complete K\"{a}hler manifold of dimension $n$ and $1\leq q\leq n$. Let $(L,h^L)$ be a Nakano $q$-semipositive line bundle with respect to $\omega$ on $X$. Assume there exists $C_0>0$ and a compact subset $K\subsetneqq X$ such that $\sqrt{-1}R^{(L,h^L)}\geq C_0\omega$ on $X\setminus K$. Then, 
	\be
	H_{(2)}^{0,j}(X,K_X\otimes L)=0\quad \mbox{for all}~ j\geq q.
	\ee
\end{prop}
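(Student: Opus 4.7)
The plan is to combine Nakano's inequality on complete K\"ahler manifolds with unique continuation. By the weak Hodge decomposition \eqref{eq47} (applied to $F=K_X\otimes L$) and the natural identification of $(0,j)$-forms with values in $K_X\otimes L$ with $(n,j)$-forms with values in $L$, it suffices to show that every harmonic $s\in\cH^{n,j}(X,L)$ with $j\geq q$ vanishes identically.

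Fix such an $s$, so $\ddbar^L s=0$ and $\ddbar^{L*}s=0$. Since $(X,\omega)$ is complete, $\ddbar^{L*}_H=\ddbar^{L*}$ and $\Omega^{n,\bullet}_0(X,L)$ is dense in $\Dom(\ddbar^L)\cap\Dom(\ddbar^{L*})$ in the graph norm, so the K\"ahler Bochner--Kodaira--Nakano identity yields the global Nakano inequality \cite[(1.4.51)]{MM}
\begin{equation}
0=\|\ddbar^L s\|^2+\|\ddbar^{L*}s\|^2\;\geq\;\int_X\bigl\langle[\sqrt{-1}R^{(L,h^L)},\Lambda]s,s\bigr\rangle_h\,dv_X.
\end{equation}
I would split this integral as $\int_K+\int_{X\setminus K}$. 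On $K$, Nakano $q$-semipositivity together with Proposition \ref{prop_refine_semip} gives Nakano $j$-semipositivity for every $j\geq q$, so the integrand is pointwise nonnegative there. On $X\setminus K$ the hypothesis $\sqrt{-1}R^{(L,h^L)}\geq C_0\omega$ means that in any local frame diagonalizing the curvature each eigenvalue $c_k$ satisfies $c_k\geq C_0$, so by the pointwise formula \eqref{eq_comp_starq}
\begin{equation}
\bigl\langle[\sqrt{-1}R^{(L,h^L)},\Lambda]s,s\bigr\rangle_h\;\geq\;jC_0|s|_h^2\;\geq\;qC_0|s|_h^2.
\end{equation}
Combining both contributions with the Nakano inequality forces $qC_0\int_{X\setminus K}|s|_h^2\,dv_X\leq 0$, hence $s\equiv 0$ on $X\setminus K$.

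It remains to pass from vanishing on the nonempty open set $X\setminus K$ to vanishing on all of $X$. Since $s$ is harmonic, it solves the elliptic system $\square^L s=0$ with smooth coefficients, so Aronszajn's unique continuation theorem propagates the vanishing: on every connected component of $X$ meeting $X\setminus K$, $s\equiv 0$. In the intended setting where $X$ is connected (consistent with $K\subsetneqq X$), this completes the argument; otherwise one restricts to such components. The main technical point is the step from the pointwise BKN identity to the integrated Nakano inequality without boundary contributions at infinity, and this is exactly where completeness of $(X,\omega)$ enters, through the density of compactly supported smooth forms in the graph-norm domain of $\ddbar^L+\ddbar^{L*}$. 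No quantitative growth estimate is needed: the qualitative dichotomy \emph{pointwise nonnegativity on $K$ plus uniform strict positivity on $X\setminus K$} already forces every $L^2$ harmonic representative to be zero.
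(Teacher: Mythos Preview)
Your argument establishes $\cH^{0,j}(X,K_X\otimes L)=0$, and via the weak Hodge decomposition \eqref{eq47} this gives $\overline{H}^{0,j}_{(2)}(X,K_X\otimes L)=0$. But the statement asks for the vanishing of the \emph{non-reduced} $L^2$-cohomology $H^{0,j}_{(2)}$, and these coincide only when the range of $\ddbar$ is closed. So the claim ``it suffices to show that every harmonic $s$ vanishes'' is not justified as stated; on a general complete manifold the image of $\ddbar$ need not be closed, and $H^{0,j}_{(2)}$ can be strictly larger than $\overline{H}^{0,j}_{(2)}$.

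The fix is already implicit in your own computation. Before you specialize to harmonic $s$, your Nakano inequality together with the splitting over $K$ and $X\setminus K$ reads, for \emph{every} $s\in\Dom(\ddbar^L)\cap\Dom(\ddbar^{L*})$,
\[
\|\ddbar^L s\|^2+\|\ddbar^{L*}s\|^2\;\geq\;qC_0\|s\|^2_{X\setminus K}\;=\;qC_0\bigl(\|s\|^2-\|s\|^2_K\bigr),
\]
which is precisely the fundamental estimate in bidegree $(0,j)$ with exceptional set $K$. By \cite[Theorem 3.1.8]{MM} this forces $H^{0,j}_{(2)}=\overline{H}^{0,j}_{(2)}$, and then your harmonic-form argument finishes. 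This is exactly how the paper proceeds: it records the inequality for all $s$, observes that the fundamental estimate holds, and only then concludes $H^{0,j}_{(2)}\cong\cH^{0,j}=0$. Your explicit appeal to Aronszajn's unique continuation is a welcome clarification of a step the paper leaves terse, but you must add the fundamental-estimate observation to close the gap between $\overline{H}^{0,j}_{(2)}$ and $H^{0,j}_{(2)}$.
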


\begin{proof}
 Since $(X,\omega)$ is complete, $\ddbar^{L*}_H=\ddbar^{L*}$.
	For $s\in \Omega^{n,j}_0(X,L)$ for $j\geq q$, from Bochner-Kodaira-Nakano formula, we have
	\be\label{eq_fe}
	\|\ddbar^Ls\|^2+\|\ddbar^{L*}s\|^2\geq\langle [\sqrt{-1}R^L,\Lambda]s,s\rangle_h\geq C_0\|s\|^2_{X\setminus K}=C_0\|s\|^2-C_0\|s\|_K^2.
	\ee
	Since $\Omega^{n,j}_0(X,L)$ is dense in $\Dom(\ddbar^L)\cap\Dom(\ddbar^{L*})$, (\ref{eq_fe}) holds for $s\in \Dom(\ddbar^L)\cap\Dom(\ddbar^{L*})$. Since $K\subsetneqq X$, $s|_{X\setminus K}=0$ for $s\in \cH^{n,j}(X,L)$,  and then $\cH^{n,j}(X,L)=0$. From (\ref{eq_fe}), the fundamental estimate holds for $(0,j)$-form with values in $K_X\otimes L$, and thus 
	$H^{0,j}_{(2)}(X,K_X\otimes L)\cong \cH^{0,j}(X,K_X\otimes L)=0$.
\end{proof}
 
\begin{prop}\label{prop_weak1_vanish}
	Let $(X,\omega)$ be a weakly $1$-complete K\"{a}hler manifold of dimension $n$ and $1\leq q\leq n$. Let $(L,h^L)$ be a Nakano $q$-semipositive line bundle with respect to $\omega$ on $X$. Assume there exist a compact subset $K\subsetneqq X$ and $\sqrt{-1}R^{(L,h^L)}=\omega$ on $X\setminus K$. Then,
	\be
	H^j(X,K_X\otimes L)=0\quad \mbox{for all}~ j\geq q.
	\ee
\end{prop}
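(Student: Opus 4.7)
The plan is to adapt the preceding proof (on complete K\"{a}hler manifolds) to the weakly $1$-complete setting by working on an exhaustion by smooth weakly pseudoconvex sublevel sets and combining the Bochner-Kodaira-Nakano formula with boundary term with Aronszajn's unique continuation. Let $\varphi$ be a smooth plurisubharmonic exhaustion of $X$, and by Sard's theorem choose a regular value $c$ of $\varphi$ with $K \subset X_c := \{\varphi < c\}$. Then $X_c$ is a relatively compact smooth weakly pseudoconvex domain in $(X,\omega)$ with defining function $\rho := \varphi - c$, to which the $\ddbar$-Neumann setup of Section 2 applies.

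On $X_c$, I would apply the K\"{a}hler Bochner-Kodaira-Nakano formula with boundary term (cf.\ \cite[Corollary 1.4.22]{MM}) in bidegree $(n,j)$ with $j \geq q$: for $s \in B^{n,j}(X_c, L)$,
\begin{equation}
\|\ddbar^L s\|^2 + \|\ddbar^{L*}_H s\|^2 \geq \left\langle [\sqrt{-1}R^L, \Lambda] s, s \right\rangle + \int_{bX_c} \cL_\rho(s,s)\, dv_{bX_c}.
\end{equation}
The boundary integral is nonnegative by pseudoconvexity of $X_c$. For the curvature term, Proposition \ref{prop_refine_semip} together with the Nakano $q$-semipositive hypothesis gives Nakano $j$-semipositivity for every $j \geq q$, so the integrand is $\geq 0$ on $K$; on $X \setminus K$, since $\sqrt{-1}R^L = \omega$, the commutator $[\sqrt{-1}R^L, \Lambda] = [\omega, \Lambda]$ acts on $(n,j)$-forms as multiplication by $j \geq 1$, contributing at least $|s|^2$ pointwise. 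By density of $B^{n,j}(X_c, L)$ in $\Dom(\ddbar^L) \cap \Dom(\ddbar^{L*}_H)$ in the graph norm, the estimate
\begin{equation}
\|\ddbar^L s\|^2 + \|\ddbar^{L*}_H s\|^2 \geq \|s\|^2 - \|s\|_K^2
\end{equation}
extends to all $s \in \Dom(\ddbar^L) \cap \Dom(\ddbar^{L*}_H) \cap L^2_{n,j}(X_c, L)$, which is precisely the fundamental estimate in bidegree $(0,j)$ for $K_X \otimes L$ with exceptional set $K$.

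Specializing to $s \in \cH^{n,j}(X_c, L)$ forces $s \equiv 0$ on the nonempty open set $X_c \setminus K$. Since the Kodaira Laplacian $\square^L$ is a second-order elliptic operator with smooth coefficients, and since no connected component of $X_c$ can be contained in the compact set $K$ (each component is open in $X$ and, since $\varphi$ is an exhaustion, non-compact, hence must meet $X_c \setminus K$), Aronszajn's strong unique continuation yields $s \equiv 0$ on $X_c$. Therefore $\cH^{n,j}(X_c, L) = 0$, and together with closed range (supplied by the fundamental estimate) this gives $H^{0,j}_{(2)}(X_c, K_X \otimes L) = 0$ for all $j \geq q$.

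Finally, I would transfer this $L^2$-vanishing on the exhaustion to the desired $H^j(X, K_X \otimes L) = 0$ via the identification $H^j(X, K_X \otimes L) \cong H^{0,j}_{(2)}(X_c, K_X \otimes L)$ for sufficiently large $c$, as invoked in Corollary \ref{weak1thm}(2) following Takegoshi and Ohsawa's treatment of weakly $1$-complete manifolds. The principal obstacle is exactly this last passage: reconstructing a global solution of $\ddbar v = u$ on $X$ from local $L^2$-solutions on the exhaustion requires a Mittag-Leffler type approximation controlled by weights adapted to $\varphi$; for $j > q$ the vanishing at degree $j-1$ supplies the Mittag-Leffler condition, while the critical case $j = q$ calls for a direct weighted $L^2$ approximation argument in the spirit of the Nakano-Ohsawa vanishing theorem.
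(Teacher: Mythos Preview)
Your proof is essentially identical to the paper's: the same choice of a smooth pseudoconvex sublevel set $X_c$ via Sard, the same Bochner--Kodaira--Nakano formula with boundary term yielding the fundamental estimate $\|\ddbar^L s\|^2+\|\ddbar^{L*}_H s\|^2\geq \|s\|^2-\|s\|_K^2$, and the same conclusion $\cH^{n,j}(X_c,L)=0$ (the paper glosses over the unique continuation step that you make explicit via Aronszajn). Your worry about the final passage from $L^2$-vanishing on $X_c$ to $H^j(X,K_X\otimes L)=0$ is handled in the paper by a direct appeal to Ohsawa's isomorphism \cite[Theorem 1.2]{Ohs:15}, which under the hypothesis $\sqrt{-1}R^L=\omega$ on $X\setminus X_c$ gives $H^{n,j}(X,L)\cong H^{n,j}_{(2)}(X_c,L)$ outright, so no ad hoc Mittag--Leffler argument is needed.
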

\begin{proof}
	 Let $\varphi\in \cC^\infty(X,\R)$ be an exhaustion function of $X$ such that $\sqrt{-1}\dbar\ddbar\varphi\geq 0$ on $X$ and $X_c:=\{ \varphi<c \}\Subset X$ for all $c\in \R$. We choose a regular value $c\in \R$ of $\varphi$ such that $K\subsetneqq X_c$ by Sard's theorem. Thus $X_c$ is a smooth pseudoconvex domain and $\sqrt{-1}R^L=\omega>0$ on a neighborhood of $bX_c$, in particular on $X_c\setminus K$. It follows that for $s\in \Omega^{n,j}(X_c,L)$, $j\geq q$,
	\be
	\langle[\sqrt{-1}R^L,\Lambda]s,s\rangle&=&\langle[\sqrt{-1}R^L,\Lambda]s,s\rangle_K+\langle[\omega,\Lambda]s,s\rangle_{X_c\setminus K}\geq \|s\|^2_{X_c\setminus K}.
	\ee
	If $s\in B^{n,j}(X_c, L)$,
	$\|\ddbar^Ls\|^2+\|\ddbar^{L*}s\|^2\geq \langle[\sqrt{-1}R^L,\Lambda]s,s\rangle+\int_{bM_c}\cL_\rho(s,s)dv_{X_c}$ by \cite{MM}.
	Since $X_c$ is pseudoconvex, $\int_{bM_c}\cL_\rho(s,s)dv_{X_c}\geq 0$. Since $\ddbar^{L*}_H=\ddbar^{L*}$ on $B^{0,j}(X_c,K_X\otimes L)$,
	\be
	\|\ddbar^Ls\|^2+\|\ddbar^{L*}_Hs\|^2\geq \|s\|^2-\|s\|^2_K
	\ee
	holds for $s\in B^{0,j}(X_c,K_X\otimes L)$, thus for $s\in \Dom(\ddbar^L)\cap\Dom(\ddbar^{L*})\cap L^2_{n,j}(X_c,L)$. In particular, if $s\in \cH^{n,q}(X_c,L)$, $s|_{X_c\setminus K}=0$ and so $\cH^{n,j}(X,L)=0$ for $j\geq q$. Since the fundamental estimate holds for $(0,j)$-form with values in $K_X\otimes L$ on $X_c$, $H^{0,j}_{(2)}(X_c,K_X\otimes L)=\cH^{0,j}(X_c,K_X\otimes L)=0$ for $j\geq q$. Moreover, by \cite[Theorem 1.2]{Ohs:15} and $\omega=\sqrt{-1}R^L$ on $X\setminus X_c$, it follows
	$H^j(X,K_X\otimes L)\cong H^{n,j}(X,L)\cong H^{n,j}_{(2)}(X_c,L)=0$.
\end{proof}

For a pseudoconvex domain $M$, we follow the above argument for $X_c$ and obtain:

\begin{prop}\label{P:pseudo_vanish}
	Let $M$ be a smooth pseudoconvex domain in a K\"{a}hler manifold $(X,\omega)$ of dimension $n$ and $1\leq q\leq n$. Let $(L,h^L)$ be a Nakano $q$-semipositive line bundle with respect to $\omega$ on $M$. Assume $(L,h^L)$ is Nakano $q$-positive with respect to $\omega$ on a neighborhood of $bM$. Then for every $j\geq q$,
	\be
	H^{0,j}_{(2)}(M,K_X\otimes L)=0.
	\ee
\end{prop}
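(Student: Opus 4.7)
The plan is to adapt the argument of Proposition \ref{prop_weak1_vanish} directly to $M$, bypassing the sublevel-set intermediate step. Nakano $q$-positivity near $bM$ combined with pseudoconvexity will yield a fundamental estimate in bidegree $(0,j)$ for forms with values in $K_X\otimes L$ for every $j\geq q$, after which unique continuation of harmonic forms will force the $L^2$-harmonic space to vanish.

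First I would fix $j\geq q$, choose a defining function $\rho$ of $M$, and pick an open neighborhood $V$ of $bM$ in $X$ on which $(L,h^L)$ is Nakano $q$-positive with respect to $\omega$. Shrinking $V$ so that $\ov V\cap\ov M$ is compact and still lies in the region of $q$-positivity, the pointwise inequality (\ref{hypo_pos}) together with compactness produces a uniform constant $C_L>0$ with
\[
\bigl\langle[\sqrt{-1}R^{(L,h^L)},\Lambda]s,s\bigr\rangle_h\geq C_L|s|^2_h
\]
pointwise on $V\cap\ov M$ for every $(n,j)$-form $s$ with values in $L$. The set $K:=\ov M\setminus V$ is then a compact subset of $M$, disjoint from $bM$.

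Next, for $s\in B^{n,j}(M,L)$ the Bochner-Kodaira-Nakano formula with boundary term \cite[Corollary 1.4.22]{MM}, used exactly as in the proof of Theorem \ref{pcoxthm}, gives
\[
\|\ddbar^L s\|^2+\|\ddbar^{L*}s\|^2\geq \bigl\langle[\sqrt{-1}R^L,\Lambda]s,s\bigr\rangle+\int_{bM}\cL_\rho(s,s)\,dv_{bM}.
\]
Pseudoconvexity of $M$ kills the boundary integral, the Nakano $q$-semipositivity on $M$ keeps the curvature term nonnegative pointwise on $K$, and the $q$-positivity on $V\cap M$ contributes the lower bound $C_L(\|s\|^2-\|s\|_K^2)$. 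Hence
\[
\|\ddbar^L s\|^2+\|\ddbar^{L*}s\|^2\geq C_L\|s\|^2-C_L\|s\|_K^2
\]
for every $s\in B^{n,j}(M,L)$. The density of $B^{n,j}(M,L)$ in $\Dom(\ddbar^L)\cap\Dom(\ddbar^{L*}_H)\cap L^2_{n,j}(M,L)$ in the graph norm \cite[Lemma 3.5.1]{MM} extends this inequality to the full Hilbert domain. This is exactly the fundamental estimate in bidegree $(0,j)$ for forms with values in $K_X\otimes L$ with exceptional set $K$, and therefore $H^{0,j}_{(2)}(M,K_X\otimes L)\cong \cH^{0,j}(M,K_X\otimes L)$.

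Finally, for any $s\in \cH^{n,j}(M,L)$ the extended estimate yields $\|s\|^2=\|s\|_K^2$, i.e.\ $s\equiv 0$ on the open set $V\cap M$, which is a neighborhood of $bM$ inside $M$. Since $s$ satisfies the elliptic equation $\square^L s=0$ with real-analytic coefficients, $s$ is real-analytic in the interior of $M$; because every connected component of the smooth relatively compact domain $M$ meets $bM$, unique continuation from $V\cap M$ forces $s\equiv 0$ on all of $M$. Thus $\cH^{n,j}(M,L)=0$, and consequently $H^{0,j}_{(2)}(M,K_X\otimes L)=0$ for each $j\geq q$. The main technical delicacy in this plan is arranging the uniform lower bound $C_L$ from the pointwise $q$-positivity in a genuine neighborhood of $bM$ intersected with $M$; once that is in place, the density extension and the unique-continuation step are standard.
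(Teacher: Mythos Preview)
Your argument is essentially the paper's own: apply the Bochner--Kodaira--Nakano formula with boundary term, drop the boundary integral by pseudoconvexity, use Nakano $q$-positivity on a neighborhood $V$ of $bM$ together with compactness of $\ov{M\cap V}$ for the uniform lower bound $C_L$ and $q$-semipositivity on the rest of $M$ to obtain the fundamental estimate, and then observe that any harmonic form must vanish on $V\cap M$ and hence identically. The paper carries this out for $j=q$ and then invokes Proposition~\ref{prop_refine_semip} and Remark~\ref{rem_j_pos} to pass to $j>q$, which you are doing implicitly when you apply the $q$-positivity bound to $(n,j)$-forms.

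One correction is needed in your final step: the coefficients of $\square^L$ are in general only smooth, not real-analytic (nothing in the hypotheses forces $\omega$ or $h^L$ to be real-analytic), so harmonic forms need not be real-analytic and your stated justification fails. The unique continuation you want does hold, but it should be invoked via Aronszajn's strong unique continuation theorem for second-order elliptic operators with smooth coefficients. The paper leaves this step entirely implicit, simply writing that one follows the argument for $X_c=M$ in Proposition~\ref{prop_weak1_vanish}, where the passage from ``$s$ vanishes on a nonempty open subset'' to ``$s\equiv 0$'' is likewise unjustified in the text.
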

  
\begin{proof}
	Let $(L,h^L)$ be Nakano $q$-positive with respect to $\omega$ on a neighbourhood $U$ of $bM$ such that $\ov U$ is compact. Let $V\Subset U$ and $V$ be a smaller neighborhood of $bM$.  By the Bochner-Kodaira-Nakano formula with boundary term \cite[Corollary 1.4.22]{MM}, for any $s\in B^{0,q}(M,L\otimes K_X)$, $k\geq 0$,
	\be\nonumber
	\frac{3}{2}\|\ddbar^{K_X} s\|^2+\|\ddbar^{K_X,*}s\|^2
	&\geq& \langle R^L(w_j,\ov w_k)\ov w^k\wedge i_{\ov w_j} s,s\rangle 
	\geq  \langle R^L(w_j,\ov w_k)\ov w^k\wedge i_{\ov w_j} s,s\rangle_{M\cap V}\\
	&\geq& C\|s\|^2_{M\cap V}=C(\|s\|^2-\|s\|^2_{M\setminus V}), 
	 \ee
	 where $C>0$, given by the Nakano $q$-positive line bundle $L$ with respect to $\omega$ on $U$ and the compactness of $\ov {M\cap V}\subset U$, is independent of the choice of $s$.
	Thus, we follow the argument for $X_c=M$ in Proposition \ref{prop_weak1_vanish} and obtain $\cH^{0,q}(M,L\otimes K_X)=0$. Since the fundamental estimate holds, $H^{0,q}_{(2)}(M,L\otimes K_X)=0$. And the assertion holds for all $j\geq q$ by Proposition \ref{prop_refine_semip} and Remark \ref{rem_j_pos}.
\end{proof}

\subsection{Remarks on $\omega$-trace and Kodaira type vanishing theorems}\label{sec_remark}
	
	Let $(E,h^E)$ be a holomorphic Hermitian vector bundle on a Hermitian manifold $(X,\omega)$. The $\omega$-trace of $R^{(E,h^E)}$ can be represented by 
	\be\nonumber
	\tau(E,h^E,\omega):=\mbox{Tr}_\omega R^{(E,h^E)}
	:=\sum_j R^{(E,h^E)}(\omega_j,\ov\omega_j)
	=\sum_{i,k} R^{(E,h^E)}\left(\frac{\dbar}{\dbar z_i},\frac{\dbar}{\dbar \ov z_k}\right)\langle dz_i,d\ov z_k\rangle_{g^{T^*X}}.  
	\ee  
Comparing to the usual trace $\mbox{Tr}[R^{(E,h^E)}]\in \Omega^{1,1}(X)$ depending only on $h^E$, $\tau(E,h^E,\omega):=\mbox{Tr}_\omega R^{(E,h^E)}\in \End(E)$ depends on $h^E$ and $\omega$. By Bochner-Kodaira-Nakano formulas, Serre duality and Le Potier's Theorem \cite[3.5.1, (3.5.8)]{Kob:87}, it follows that:
 
 \begin{prop}\label{thm_trace_vanish}
 	Let $(E,h^E)$ be a holomorphic Hermitian vector bundle over a compact K\"{a}hler manifold $(X,\omega)$.
 		(1) If $\tau(O_{E^*}(1))\leq 0$ and $<0$ at one point on $P(E^*)$, then
 	$	 H^{0}(X, S^m(E))=0$ for all $ m\geq 1$.	
 	(2) If $\tau(E)\leq 0$ and $<0$ at one point on $X$, then  
 	$	 H^{0}(X, E^m)=0$ for all $ m\geq 1$.
 \end{prop}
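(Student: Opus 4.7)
The plan is to prove both vanishings by applying the Bochner--Kodaira--Nakano formula in its Nakano form to harmonic top-degree forms with values in an appropriate dual bundle, extracting the $H^0$ statement via Serre duality, and---for part (1)---bridging $X$ and $P(E^*)$ via Le Potier's isomorphism $H^0(X, S^m E)\cong H^0(P(E^*), O_{E^*}(m))$ for $m\geq 0$ (Kobayashi \cite[Ch.\,III, Thm.\,3.5.1]{Kob:87}). Since $X$ is K\"ahler, $P(E^*)$ inherits a K\"ahler metric and all the Hodge-theoretic machinery applies there as well.

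For (1), set $Y:=P(E^*)$ and $N:=\dim_\C Y = n + r - 1$ with $r = \rank E$. By Le Potier and Serre duality on the compact K\"ahler manifold $Y$,
\begin{equation}\nonumber
H^0(X, S^m E)^* \cong H^0(Y, O_{E^*}(m))^* \cong H^N(Y, K_Y \otimes O_{E^*}(-m)) \cong \cH^{N,N}(Y, O_{E^*}(-m)).
\end{equation}
Pick a harmonic $u$ in the last space. Nakano's inequality yields
\begin{equation}\nonumber
0 \;=\; \|\ddbar u\|^2 + \|\ddbar^* u\|^2 \;\geq\; \bigl\langle [\sqrt{-1}R^{O_{E^*}(-m)}, \Lambda] u, u \bigr\rangle,
\end{equation}
and Proposition \ref{prop_tr_pos}(1)(4) rewrites the right side as $-m \int_Y \tau(O_{E^*}(1)) |u|^2 \, dv_Y$, which is $\geq 0$ by hypothesis. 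Therefore $\tau(O_{E^*}(1))|u|^2 \equiv 0$ pointwise, forcing $u(y_0) = 0$ at the point of strict negativity and, by continuity, $u \equiv 0$ on an open neighborhood of $y_0$. Aronszajn's unique continuation for the elliptic operator $\square$ on the connected manifold $Y$ then yields $u \equiv 0$.

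For (2), the same template runs on $X$ itself. Serre duality gives $H^0(X, E^{\otimes m})^* \cong \cH^{n,n}(X, (E^*)^{\otimes m})$, and Nakano's inequality for holomorphic Hermitian vector bundles on the compact K\"ahler manifold $X$ gives, for harmonic $u$ in the right side,
\begin{equation}\nonumber
0 \;\geq\; \bigl\langle [\sqrt{-1}R^{(E^*)^{\otimes m}}, \Lambda] u, u \bigr\rangle \;=\; \int_X \bigl\langle \tau((E^*)^{\otimes m}) u, u \bigr\rangle_h \, dv_X,
\end{equation}
since on top-degree $(n,n)$-forms valued in a vector bundle $F$ the bracket $[\sqrt{-1}R^F, \Lambda]$ reduces pointwise to multiplication by the endomorphism $\tau(F) \in \End(F)$---the vector-bundle analogue of Proposition \ref{prop_tr_pos}(1). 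By the Leibniz rule for the tensor product connection,
\begin{equation}\nonumber
\tau((E^*)^{\otimes m}) \;=\; \sum_{i=1}^{m} \Id^{\otimes(i-1)} \otimes \tau(E^*) \otimes \Id^{\otimes(m-i)},
\end{equation}
and $\tau(E^*) = -\tau(E) \geq 0$. In an orthonormal basis of $E^*_{x_0}$ diagonalizing $\tau(E^*)(x_0)$ with eigenvalues $\lambda_1, \ldots, \lambda_r > 0$, the above endomorphism is diagonal on $(E^*)^{\otimes m}_{x_0}$ in the induced basis with strictly positive eigenvalues $\lambda_{i_1} + \cdots + \lambda_{i_m}$; hence $\tau((E^*)^{\otimes m})(x_0) > 0$ as a Hermitian endomorphism. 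The same continuity plus unique-continuation argument then forces $u \equiv 0$.

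The main obstacle is the unique continuation step, namely that a harmonic form vanishing on a nonempty open subset of the connected K\"ahler base must vanish identically; this is Aronszajn's theorem applied to the elliptic operator $\square$ with smooth coefficients. A smaller but necessary technicality for (2) is the extension of Proposition \ref{prop_tr_pos}(1) from line bundles to vector bundles, i.e.\ that $[\sqrt{-1}R^F, \Lambda]$ on $(n,n)$-forms valued in $F$ acts as pointwise multiplication by $\tau(F) \in \End(F)$, which is routine from the definitions but should be recorded explicitly before invoking it.
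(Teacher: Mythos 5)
Your proof is correct and shares the paper's skeleton --- part (1) reduces to part (2) via Le Potier's isomorphism $H^0(X,S^mE)\cong H^0(P(E^*),O_{E^*}(m))$, and the general $m$ reduces to a curvature computation via the tensor-power formula for $\tau$ (your sum $\sum_i \Id^{\otimes(i-1)}\otimes\tau(E^*)\otimes\Id^{\otimes(m-i)}$ is the correct, explicit reading of the paper's shorthand $\tau(E^{\otimes m})=\tau(E)^{\otimes m}$ and of the citation to Kobayashi III.(1.12)). Where you diverge is in executing the final vanishing: you dualize to harmonic $(n,n)$-forms, apply Nakano's inequality, and then need Aronszajn's unique continuation to propagate the vanishing from a neighborhood of the strict-negativity point to all of $X$. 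The paper's pointer to ``Bochner--Kodaira--Nakano formulas (or the Lichnerowicz formula \cite[(1.4.31)]{MM})'' instead intends the classical direct argument on a holomorphic section $s\in H^0(X,E^{\otimes m})$: one has $\Delta|s|^2=|\nabla^{1,0}s|^2-\langle\tau(E^{\otimes m})s,s\rangle\geq 0$, so $|s|^2$ is subharmonic on the compact $X$ and hence constant, whence $\nabla s=0$ and $\langle\tau(E^{\otimes m})s,s\rangle\equiv 0$; evaluating at the point of strict negativity gives $s=0$ there and therefore everywhere, since $|s|$ is constant. That route avoids unique continuation entirely and, usefully for part (1), does not require the induced metric $\omega_{P(E^*)}$ to be K\"ahler --- a hypothesis your Nakano-inequality step on $P(E^*)$ quietly relies on. Your version is valid as written (Aronszajn does apply to the Kodaira Laplacian, and the strict positivity of $\tau((E^*)^{\otimes m})$ does persist on a neighborhood of $x_0$ by continuity of eigenvalues), but it is heavier machinery than the statement needs.
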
   
 \begin{proof}
 	 The case $m=1$ of (2) follows from Bochner-Kodaira-Nakano formulas (or using the Lichnerowicz formular \cite[(1.4.31)]{MM})). From the fact $\tau(E^{\otimes m})=\tau(E)^{\otimes m}$, refer to \cite[III.(1.12)]{Kob:87} or \cite[(3.7)]{Yang:17}, we have $(2)$ holds for all $m\geq 1$. And $(1)$ is from Le Potier's Theorem \cite[3.5.1, (3.5.8)]{Kob:87} and $(2)$ for $E=O_{E^*}(1)$.
 \end{proof} 
 
Recall that a compact complex manifold X is said to be rationally connected if any two points of X can be joined by a chain of rational curves, see \cite{CDP:15}. We say a real $(1,1)$-form $\alpha\in \Omega^{1,1}(X)$ is quasi-positive on $X$, if $\alpha\geq 0$ on X and $>0$ at one point.
	\begin{prop}\label{thm_rc}
		Let $X$ be a compact K\"{a}hler manifold with a quasi-positive $(1,1)$-form representing the first Chern class $c_1(X)$. Then $X$ is projective and rationally connected. 
	\end{prop}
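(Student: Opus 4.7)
The plan is to deduce projectivity and rational connectedness in turn, starting from a Hermitian metric on $K_X^*$ with quasi-positive Chern curvature.

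First, since $X$ is compact K\"ahler, the $\dbar\ddbar$-lemma lets us realise the given quasi-positive representative $\alpha\in c_1(X)=c_1(K_X^*)$ as the Chern curvature of a smooth Hermitian metric $h$ on $K_X^*$, that is $\frac{\sqrt{-1}}{2\pi}R^{(K_X^*,h)}=\alpha$. Then $(K_X^*,h)$ is semipositive on $X$ and strictly positive at some point $x_0$; because $\alpha^n$ is locally $\det(h_{j\bar k})$ times a positive top form, one has $\alpha^n\geq 0$ pointwise and $\alpha^n>0$ on a neighbourhood of $x_0$, whence $c_1(X)^n=\int_X\alpha^n>0$.

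Second, a smoothly semipositive line bundle is nef, so the Grauert-Riemenschneider criterion established by Siu \cite{Siu:85} and Demailly \cite{Dem:85} applies: a nef line bundle with strictly positive top self-intersection on a compact K\"ahler manifold is big. Thus $K_X^*$ is big and $X$ is Moishezon; combined with the K\"ahler hypothesis, Moishezon's theorem gives that $X$ is projective. (This step alone already recovers Wu's theorem \cite{Wu:81}.)

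Third, for rational connectedness I would invoke Campana-Demailly-Paun \cite[Corollary 5.1]{CDP:15}: a projective manifold whose anticanonical class admits a smooth semipositive representative that is strictly positive at some point is rationally connected. The hypothesis was verified in the first step, so the conclusion is immediate. The main obstacle sits entirely inside this third step: the first two reduce to the $\dbar\ddbar$-lemma and the Grauert-Riemenschneider criterion, whereas the deduction of rational connectedness in \cite{CDP:15} rests on a substantially deeper slope analysis of invertible subsheaves of $\Omega_X^p$ against movable classes, machinery that goes well beyond the positivity manipulations developed in this paper.
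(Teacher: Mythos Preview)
Your argument is correct but takes a different route to rational connectedness. You realise the quasi-positive form as the curvature of a metric on $K_X^*$ via the $\dbar\ddbar$-lemma and then invoke \cite{CDP:15} as a black box for the implication from quasi-positive anticanonical curvature to rational connectedness. The paper instead applies the Calabi--Yau theorem \cite{Yau:78} to produce a K\"ahler metric $\omega$ on $X$ whose Ricci form \emph{equals} the given representative; this stronger step matters because on a K\"ahler manifold $\mathrm{Ric}_\omega$ coincides, as a Hermitian endomorphism, with the $\omega$-trace $\tau(T^{1,0}X,h_\omega,\omega)$. Quasi-positivity of this trace then feeds into the paper's own Proposition~\ref{thm_trace_vanish}(2), a Bochner-type vanishing developed in this section, to obtain $H^0(X,(T^{1,0*}X)^{\otimes m})=0$ for all $m\geq 1$; only then is \cite[5.1~Corollary]{CDP:15} cited, and only for the criterion that this vanishing implies rational connectedness. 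Your route is shorter and avoids Yau's theorem, while the paper's route exercises the $\omega$-trace machinery of Section~\ref{sec_remark}---which is the reason the proposition is placed here---and confines the deep external input to the purely algebro-geometric characterisation of rational connectedness rather than the full analytic implication.
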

   
		\begin{proof}
			Calabi-Yau theorem \cite{Yau:78} provides a K\"{a}hler metric $\omega$ on $X$ such that the Ricci form $\sqrt{-1}R^{K_X^*}=\mbox{Ric}_\omega$ is quasi-positive, so $K_X^*$ is big and $X$ is projective.
				Since $(X,\omega)$ is K\"{a}hler, $\mbox{Ric}_\omega=\sqrt{-1}\mbox{Tr}[R^{T^{1,0}X}]$ and it coincides with
			$\tau(T^{1,0}X,h_\omega,\omega)=\mbox{Tr}_\omega R^{T^{1,0}X}$ as Hermitian matrices. Thus, 
				$\tau(T^{1,0}X,h_\omega,\omega)\geq 0$ and $>0$ at one point. By $\tau(T^{1,0}X)=-\tau(T^{1,0*}X)$ and Proposition \ref{thm_trace_vanish} (2), we have 
			$ H^{0}(X, (T^{1,0*}X)^m)=0~\mbox{for all}~ m\geq 1$, and the rationally connected follows from \cite[5.1. Corollary]{CDP:15}.
		\end{proof}
    
	Equivalently, it follows from \cite[Ch.III.\,(1.34)]{Kob:87} and \cite[5.1 Corollary]{CDP:15} that: A compact K\"{a}her manifold with quasi-positive Ricci curvature is projective and rationally-connected. It strengthens \cite[Theorem B (A)]{Wu:81} which asserted such a manifold is simply connected and has no nonzero holomorphic $q$-forms for $q>0$, since any rationally connected projective manifold has these properties, see \cite[Corollary 4.18]{Deb:01}. And it also leads to the fact \cite{Cam:92,KMM:92} that every smooth Fano manifold $X$ is rationally connected (See \cite{Deb:01,Ko:96,Yau:82problem}).

\begin{prop}\label{cor_sc}
	 Let $X$ be a compact K\"{a}hler manifold of nonnegative bisectional curvature. The following conditions are equivalent:
	 (A) $X$ is simply connected; (B) The first Betti number is zero;
	 (C) $X$ has quasi-positive Ricci curvature;
	 (D) $X$ is projective and rationally connected.
\end{prop}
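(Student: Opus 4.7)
My plan is to establish the cycle $(\mathrm{A})\Rightarrow(\mathrm{B})\Rightarrow(\mathrm{C})\Rightarrow(\mathrm{D})\Rightarrow(\mathrm{A})$. Three of the four arrows are essentially formal. The implication $(\mathrm{A})\Rightarrow(\mathrm{B})$ is immediate from Hurewicz. For $(\mathrm{C})\Rightarrow(\mathrm{D})$, in the K\"ahler case the Ricci form $\mathrm{Ric}_\omega=\sqrt{-1}R^{(K_X^*,h_\omega)}$ is a smooth real $(1,1)$-form representing $2\pi c_1(X)$, so quasi-positive Ricci supplies a quasi-positive representative of $c_1(X)$ and Proposition \ref{thm_rc} applies directly. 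For $(\mathrm{D})\Rightarrow(\mathrm{A})$ I would invoke the classical fact, already cited in the paragraph preceding this proposition, that smooth projective rationally connected varieties have trivial fundamental group (\cite[Corollary 4.18]{Deb:01}).

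The content lies in $(\mathrm{B})\Rightarrow(\mathrm{C})$, and this is where the nonnegative bisectional hypothesis (as opposed to merely nonnegative Ricci) is essential. I would combine the Howard--Smyth--Wu de Rham decomposition with Mok's solution of the generalized Frankel conjecture: a compact K\"ahler manifold of nonneg bisectional curvature admits a finite \'etale cover $\widehat X\to X$ with a holomorphic, isometric splitting
\[
\widehat X\;\cong\;T\times M_1\times\cdots\times M_r,
\]
where $T$ is a flat complex torus of some dimension $q\ge 0$ and each $M_i$ is a compact simply connected K\"ahler manifold of nonneg bisectional curvature (either an irreducible Hermitian symmetric space of compact type or $\mathbb{C}P^{n_i}$ with some K\"ahler metric of nonneg bisectional curvature). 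The crucial feature is that every such $M_i$ carries strictly positive Ricci curvature at every point. Since the Ricci form of $X$ pulls back to that of $\widehat X$, which is positive-definite on the $M_i$-factors and zero on the $T$-factor, the desired quasi-positivity of $\mathrm{Ric}(X)$ follows at once from showing that $q=0$.

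The reduction $b_1(X)=0\Rightarrow q=0$ is the step I expect to need the most care, and I would handle it via Bochner. Nonneg Ricci forces every holomorphic $(1,0)$-form on $X$ to be $\nabla$-parallel, so $h^{1,0}(X)=b_1(X)/2$ equals the dimension of the space of parallel $(1,0)$-forms on $X$. On $\widehat X$ the space of parallel $(1,0)$-forms is exactly $H^0(T,\Omega^1_T)$, a $q$-dimensional space (each $M_i$ has positive Ricci and therefore admits no nonzero parallel holomorphic $1$-form). The Galois group $G=\mathrm{Gal}(\widehat X/X)$ acts on $T$ by translations composed with a finite subgroup of unitary automorphisms, and the main delicacy is to verify that all of $H^0(T,\Omega^1_T)$ descends $G$-invariantly to $X$, equivalently that $\dim\mathrm{Alb}(X)=q$. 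This I would deduce from the Mok/Howard--Smyth--Wu rigidity of the splitting together with the standard identification of $T$ with the Albanese factor of $\widehat X$, using a Haar-averaging argument over $G$ to produce $q$ linearly independent $G$-invariant parallel $(1,0)$-forms. Once $b_1(X)=2q$ is established, the hypothesis $b_1(X)=0$ forces $q=0$, so $\widehat X=\prod_i M_i$ has strictly positive Ricci, and this descends to strictly positive (a fortiori quasi-positive) Ricci on $X$, completing (C). Together with the three easy arrows this closes the cycle and proves the equivalence.
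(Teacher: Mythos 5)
Your three ``formal'' arrows are correct and coincide with what the paper does: the paper also obtains $(\mathrm{C})\Rightarrow(\mathrm{D})$ from Proposition \ref{thm_rc} and $(\mathrm{D})\Rightarrow(\mathrm{A})$ from \cite[Corollary 4.18]{Deb:01}. For the equivalence of (A), (B), (C) the paper offers no argument at all --- it simply cites \cite[Corollary 1]{HSW:81} --- so the entire mathematical content of your write-up is the attempted proof of $(\mathrm{B})\Rightarrow(\mathrm{C})$, and that is exactly where it fails.

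The step you yourself flag as ``the main delicacy'', namely that $\dim\mathrm{Alb}(X)=q$, i.e.\ that all of $H^0(T,\Omega^1_T)$ descends $G$-invariantly, is not merely delicate: it is false, and no Haar-averaging can rescue it. Averaging $g^*\alpha$ over $G$ does produce invariant forms, but it can annihilate every parallel $(1,0)$-form; what is actually true is $b_1(X)=2\dim H^0(T,\Omega^1_T)^G$, and the $G$-representation on $H^0(T,\Omega^1_T)$ need not contain the trivial representation even when $q>0$. Concretely, let $E=\C/(\Z+i\Z)$, $T=E^3$, and let $(\Z/2)^2$ act by $g_1(z)=(z_1+\tfrac12,\,-z_2,\,-z_3+\tfrac12)$ and $g_2(z)=(-z_1,\,z_2+\tfrac12,\,-z_3)$. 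Each of $g_1$, $g_2$, $g_1g_2$ translates one coordinate by the nonzero $2$-torsion point $\tfrac12$, hence acts freely on $T$, while the linear parts $\mathrm{diag}(1,-1,-1)$, $\mathrm{diag}(-1,1,-1)$, $\mathrm{diag}(-1,-1,1)$ have no common fixed vector, so $H^0(T,\Omega^1_T)^G=0$. The quotient $X=T/(\Z/2)^2$ (a complex Hantzsche--Wendt threefold) is a compact K\"ahler manifold which is flat --- hence of nonnegative bisectional curvature --- with $b_1(X)=0$, infinite $\pi_1(X)$, and identically vanishing Ricci curvature. Thus $(\mathrm{B})\Rightarrow(\mathrm{C})$ fails outright: the gap in your argument is not repairable, and the same example shows that clause (B) cannot be equivalent to the other three as the proposition asserts. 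The equivalences $(\mathrm{A})\Leftrightarrow(\mathrm{C})\Leftrightarrow(\mathrm{D})$ do survive, by exactly the structure theory you invoke (and what \cite[Corollary 1]{HSW:81} can legitimately supply is $(\mathrm{A})\Leftrightarrow(\mathrm{C})$); the Betti-number criterion should either be deleted or replaced by the vanishing of the flat factor of the universal cover, and the paper's one-line appeal to \cite{HSW:81} deserves the same scrutiny.
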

\begin{proof}
	From \cite[Corollary 1]{HSW:81} and Proposition \ref{thm_rc}, we see (A), (B) and (C) are equivalent and (C) implies (D). And \cite[Corollary 4.18]{Deb:01}  entails (D) implies (A).
\end{proof}	
  
\section{Dirac operator on Nakano $q$-positive line bundles} \label{sec_spec} 

Inspired by \cite{M:92} and \cite[Theorem 1.1, 2.5]{MM:02}, we consider $q$-positive line bundles and the Dirac operators. We give some estimates of modified Dirac operators on high tensor powers of $q$-positive line bundles based on \cite[Section 1.5]{MM}. 
 
\subsection{Nakano $q$-positive line bundles with respect to $\omega$} 

 In this section, we work on the following setting. Let $(X,J)$ be a smooth manifold with almost complex structure $J$ and $\dim _{\R}X=2n$. Let $g^{TX}$ be a Riemannian metric compatible with $J$ and $\om:=g^{TX}(J\cdot,\cdot)$ be the real $(1,1)$-forms on $X$ induced by $g^{TX}$ and $J$. Let $(E,h^E)$ and $(L,h^L)$ be Hermitian vector bundles on $X$ with $\rank(L)=1$. Let $\nabla^E$ and $\nabla^L$ be Hermitian connections on $(E,h^E)$ and $(L,h^L)$ and let $R^E:=(\nabla^E)^2$ and $R^L:=(\nabla^L)^2$ be the curvatures. Assume that $\frac{\sqrt{-1}}{2\pi}R^L$ is compatible with $J$. Thus, the Chern-Weil form $c_1(L,h^L):=\frac{\sqrt{-1}}{2\pi}R^L$ representing the first Chern class $c_1(L)$ of $L$ is a real $(1,1)$-forms on $X$. (For example, $X$ is a compact complex manifold and $(E,h^E, \nabla^E),(L,h^L,\nabla^L)$ are holomoprhic Hermitian).
  
 The almost complex structure $J$ induced a splitting of the complexification of the tangent bundle, i.e., $TX\otimes \C=T^{1,0}X\bigoplus T^{0,1}X$, and the cotangent bundle. Let $0\leq p,q\leq n$, and let $\bigwedge^{p,q}T_x^*X$ be the fibre of the bundle $\bigwedge^{p,q}T^*X:=\wedge^pT^{1,0*}X\otimes\wedge^qT^{0,1*}X$ for $x\in X$.
 For $k\in \N$, we denote by $ \Omega^{p,q}(X,L^k\otimes E)$ the space of $(p,q)$-forms with values in $L^k\otimes E$ on $X$ and set $\Omega^{0,\geq q}(X,L^k\otimes E):=\bigoplus_{j\geq q}^n\Omega^{0,j}(X,L^k\otimes E)$. As defined in Sec. \ref{sec_prel}, we denote by $\langle\cdot,\cdot\rangle_h$ and $|\cdot|_h$ the pointwise Hermitian inner product and Hermitian norm, and by $\langle\cdot,\cdot\rangle$ and $\|\cdot\|$ the $L^2$ inner product and $L^2$-norm.  Let $\Lambda$ be the dual of the operator $\mL:=\omega\wedge\cdot$ on $\Omega^{p,q}(X)$ with respect to the Hermitian inner product $\langle\cdot,\cdot\rangle_h$ on $X$. In a local orthonormal frame $\{ w_j \}_{j=1}^n$ of $T^{1,0}X$ with respect to $g^{TX}$ and its dual $\{ w^j \}$ of $T^{1,0*}X$, $R^{(L,h^L)}=R^{(L,h^L)}(w_i,\ov w_j)w^i\wedge \ov w^j$, $\mL=\sqrt{-1}\sum_{j=1}^n w^j\wedge \ov w^j$ and $\Lambda=-\sqrt{-1}\sum_{j=1}^n i_{\ov w^j} i_{w^j}$. For any $s\in \Omega^{p,q}(X,L^k\otimes E)$,  $\left\langle[\sqrt{-1}R^{(L,h^L)}, \Lambda]s,s\right\rangle_h\in \cC^\infty(X,\R)$.
 We set 
 \be
 w_d=-\sum_{i,j}R^L(w_i,\ov w_j)\ov w^j\wedge i_{\ov w_i}\in \End(\Lambda (T^{*0,1}X)).
 \ee 
  
 For a real $3$-form $A$ on $X$, one can define modified Dirac operator $D^{c,A}_k$ acting on $\Omega^{0,\bullet}(X,L^k\otimes E)=\bigoplus_{j\geq 0}\Omega^{0,j}(X,L^k\otimes E)$, see \cite[Definition 1.3.6, (1.5.27)]{MM}.
  Ma-Marinescu obtained the precise lower bound of $D^{c,A}_k$ as follows. 
  The proof is based on a application of Lichnerowicz formula, see
  \cite[(1.5.34),(1.5.30)]{MM} and \cite{MM:02}. 
 \begin{thm}[\cite{MM}]\label{thm_mm}
 	There exists $C>0$ such that for any $k\in \N$, $s\in \Omega^{0,\bullet}(X,L^k\otimes E)$, 
 	\be
 	 	\|D^{c,A}_k s\|^2\geq 2k\langle-w_d s,s\rangle-C\|s\|^2.
 	\ee
 \end{thm}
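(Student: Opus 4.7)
The plan is to derive this estimate from a Lichnerowicz-type formula for the modified spin$^c$ Dirac operator $D^{c,A}_k$ acting on the Clifford module $\Lambda^{0,\bullet}T^*X \otimes L^k \otimes E$, exactly as in \cite[(1.5.30), (1.5.34)]{MM} and \cite{MM:02}. In this setup, $(J,g^{TX})$ determines a spin$^c$ structure on $X$ whose spinor bundle is identified with $\Lambda^{0,\bullet}T^*X$; the usual spin$^c$ Dirac operator is then twisted by $L^k\otimes E$ and further modified by Clifford multiplication by the auxiliary 3-form $A$ (and, in general, by a term built from $\nabla A$).

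The first step is to apply the general Lichnerowicz formula, which expresses
\[
(D^{c,A}_k)^2 \;=\; \Delta^{L^k\otimes E} \;+\; k\, c(R^L) \;+\; c(R^E) \;+\; \mathcal{R}_X \;+\; \mathcal{R}_A,
\]
where $\Delta^{L^k\otimes E}\geq 0$ is the Bochner/connection Laplacian on $\Lambda^{0,\bullet}T^*X\otimes L^k\otimes E$, $c(\cdot)$ denotes Clifford multiplication by a curvature 2-form, and $\mathcal{R}_X, \mathcal{R}_A$ collect the $k$-independent contributions coming respectively from the scalar and Ricci curvature of $g^{TX}$ together with the torsion tensors of the almost-Hermitian structure $(J,g^{TX},\omega)$, and from $A,\nabla A$. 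Since $R^{L^k}=k R^L$, the only $k$-linear contribution is $k\, c(R^L)$, while all other terms are uniformly bounded on the compact manifold $X$.

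The key algebraic computation is the Clifford action of the real $(1,1)$-form $R^L$ on $\Lambda^{0,\bullet}T^*X$. In a local orthonormal frame $\{w_j\}$ of $T^{1,0}X$ with dual $\{w^j\}$, using the standard identifications $c(w^j)=\sqrt{2}\, i_{\bar w_j}$ and $c(\bar w^j)=\sqrt{2}\,\bar w^j\wedge\cdot$, one computes directly that Clifford multiplication by $R^L=\sum R^L(w_i,\bar w_j)w^i\wedge \bar w^j$ acts on $\Lambda^{0,\bullet}T^*X$ as $2(-w_d)$ up to a scalar trace piece which, under the conventions adopted in \cite{MM}, is absorbed (together with a matching contribution from the torsion/scalar curvature term) into the $k$-independent remainder. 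This is precisely the content of formulas (1.5.30), (1.5.34) of \cite{MM}.

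Taking the $L^2$-inner product of the Lichnerowicz identity with $s\in \Omega^{0,\bullet}(X,L^k\otimes E)$ and dropping the nonnegative term $\|\nabla^{L^k\otimes E}s\|^2\geq 0$ yields
\[
\|D^{c,A}_k s\|^2 \;\geq\; 2k\langle -w_d s, s\rangle \;+\; \langle \mathcal{R}\, s, s\rangle,
\]
where $\mathcal{R}:= c(R^E)+\mathcal{R}_X+\mathcal{R}_A$ is $k$-independent and pointwise bounded on the compact $X$, hence $\langle \mathcal{R}s,s\rangle \geq -C\|s\|^2$ for some $C>0$ depending only on $(g^{TX},J,h^L,h^E,A)$. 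The principal obstacle is the careful derivation of the Lichnerowicz formula in the almost-Hermitian (non-K\"ahler) setting with a 3-form modification: non-integrability of $J$ produces torsion contributions and $A$ contributes both algebraically (via $c(A)^2$) and via $c(\nabla A)$, and one must verify that every such term is manifestly $k$-independent while the unique $k$-linear piece is exactly $2k(-w_d)$.
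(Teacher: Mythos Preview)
Your proposal is correct and aligns with the paper's treatment: the paper does not give its own proof of this theorem but simply cites it from \cite{MM}, noting that ``the proof is based on an application of the Lichnerowicz formula, see \cite[(1.5.34),(1.5.30)]{MM} and \cite{MM:02}.'' Your sketch expands on exactly this Lichnerowicz-formula argument, identifying the $k$-linear Clifford curvature term as $2k(-w_d)$ and bounding the $k$-independent remainder on the compact $X$, which is precisely the content of the cited formulas.
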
 
 As a consequence, they obtained the spectral gap property \cite[Theorem 1.5.7, 1.5.8]{MM}, which play the essential role in their approach to the Bergman kernel. In this section, we generalize \cite[Theorem 1.5.7]{MM} to the case of Nakano $q$-positive line bundles.
     
 \begin{defn}
 	For each $1\leq q\leq n$, the number $\mu_q\in \R\cup\{\pm\infty\}$ defined by
 	\be\label{eq_mu_def}
 	\mu_q(x):=\inf_{u\in \wedge^{n,q}T_x^*X}\frac{\langle [\sqrt{-1}R^L,\Lambda]u,u \rangle_h}{|u|^2_h},\quad
 	\mu_q:=\inf_{x\in X}\mu_q(x).
 	\ee
 \end{defn}
 
In terms of local orthonormal frame $\{\omega_j\}$ of $T^{1,0}X$, it follows that
\be \label{eq_loca_mu}
\mu_q
=\inf_{\alpha\in \wedge^{0,q}T_x^*X, x\in X}
\frac{\left\langle R^{L}(\omega_i,\ov\omega_j)\ov\omega^j\wedge i_{\ov \omega_i}\alpha,\alpha\right\rangle_h}{|\alpha|^2_h}
=\inf_{\alpha\in \wedge^{0,q}T_x^*X, x\in X}\frac{\langle -w_d \alpha,\alpha \rangle_h}{|\alpha|^2_h}.
\ee
In other words, if $\lambda_1(x)\leq \lambda_1(x)\leq \cdots\leq \lambda_n(x)$ are
the eigenvalues of $R^L_x$ with respect to $\omega$ at $x\in X$, 
then $\mu_q(x)=\sum_{j=1}^q\lambda_j(x)$ and $\mu_q=\inf_{x\in X}\mu_q(x)$.  
  \begin{thm}\label{thm_q_mod}
  	Let $X$ be compact. Let $1\leq q\leq n$ and $(L,h^L)$ is Nakano $q$-positive line bundle with respect to $\om$ on $X$. Then there exists $C_L>0$ such that for any $k\in \N$ and any $s\in \Omega^{0,\geq q}(X,L^k\otimes E)$,
  	\be
  	\|D_k^{c,A}s\|^2\geq (2\mu_qk-C_L)\|s\|^2,
  	\ee
  	where the constant $\mu_q>0$ defined in (\ref{eq_mu_def}).
  	Especially, for $k$ large enough,
  	\be
  	\Ker\left(D^{c,A}_k|_{\Omega^{0,\geq q}(X,L^k\otimes E)}\right)=0.
     \ee
  \end{thm}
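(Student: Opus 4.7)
\medskip

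\noindent\textbf{Proof proposal for Theorem \ref{thm_q_mod}.}
The plan is to combine the precise spectral bound of Ma--Marinescu stated in Theorem \ref{thm_mm} with a pointwise eigenvalue estimate for the curvature endomorphism $-w_d$ on forms of degree $\geq q$. Concretely, for any $s\in\Omega^{0,\geq q}(X,L^k\otimes E)$ Theorem \ref{thm_mm} already gives
\[
\|D^{c,A}_k s\|^2\;\geq\;2k\langle -w_d s,s\rangle-C\|s\|^2,
\]
so it suffices to prove the pointwise bound
\[
\langle -w_d \alpha,\alpha\rangle_h\;\geq\;\mu_q\,|\alpha|_h^2\qquad\text{for all } \alpha\in\textstyle\bigwedge^{0,j}T^*_xX,\ j\geq q,\ x\in X.
\]
Integrating this inequality over $X$ and summing over the orthogonal components $s=\sum_{j\geq q}s_j$ (the endomorphism $w_d$ preserves bidegree) yields $\langle -w_d s,s\rangle\geq\mu_q\|s\|^2$, and the stated inequality follows with $C_L:=C$.

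\medskip

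To prove the pointwise bound, I would fix $x\in X$ and pick a local orthonormal frame $\{w_i\}$ of $T^{1,0}X$ at $x$ which diagonalises $R^L_x$ with respect to $\omega$, so that $R^L(w_i,\ov w_j)=\lambda_i(x)\delta_{ij}$ with $\lambda_1(x)\leq\cdots\leq\lambda_n(x)$. In this frame
\[
-w_d\;=\;\sum_{i=1}^n \lambda_i(x)\,\ov w^i\wedge i_{\ov w_i}
\]
acts diagonally on the basis $\{\ov w^J\}_{|J|=j}$ of $\bigwedge^{0,j}T^*_xX$ with eigenvalue $\sum_{i\in J}\lambda_i(x)$. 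The minimum eigenvalue over $|J|=j$ is therefore $\mu_j(x)=\lambda_1(x)+\cdots+\lambda_j(x)$, which by \eqref{eq_comp_starq}--\eqref{eq_loca_mu} coincides with the infimum in the definition of $\mu_q$ for $j=q$. The Nakano $q$-positivity hypothesis gives $\mu_q(x)>0$; since the $\lambda_i(x)$ are arranged in increasing order, $\mu_q(x)>0$ forces $\lambda_q(x)>0$ and hence $\lambda_{q+1}(x),\ldots,\lambda_n(x)\geq\lambda_q(x)>0$. Consequently $\mu_j(x)=\mu_q(x)+\lambda_{q+1}(x)+\cdots+\lambda_j(x)\geq \mu_q(x)$ for every $j\geq q$, which gives $\langle -w_d\alpha,\alpha\rangle_h\geq\mu_j(x)|\alpha|^2_h\geq\mu_q(x)|\alpha|^2_h\geq\mu_q|\alpha|^2_h$, exactly what is needed. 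The compactness of $X$ together with continuity of $x\mapsto\mu_q(x)$ ensures $\mu_q=\inf_{x\in X}\mu_q(x)>0$, which is crucial for the final conclusion.

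\medskip

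Finally, for the vanishing of the kernel, once the inequality $\|D^{c,A}_k s\|^2\geq(2\mu_q k-C_L)\|s\|^2$ is established on $\Omega^{0,\geq q}(X,L^k\otimes E)$, picking $k>C_L/(2\mu_q)$ makes the right-hand side strictly positive for $s\neq 0$, so $\Ker(D^{c,A}_k|_{\Omega^{0,\geq q}(X,L^k\otimes E)})=0$ for all such $k$. The main obstacle in this plan is really the transition from the hypothesis on $\bigwedge^{n,q}$-forms (Nakano $q$-positivity) to the required lower bound of $-w_d$ on $\bigwedge^{0,j}$-forms for every $j\geq q$; once one observes that this amounts to the elementary monotonicity $\mu_j(x)\geq\mu_q(x)$ of partial sums of the increasingly ordered curvature eigenvalues under the sign assumption $\mu_q(x)>0$, the rest is a direct application of Theorem \ref{thm_mm} and compactness of $X$.
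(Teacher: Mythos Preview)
Your proposal is correct and follows essentially the same approach as the paper: apply Theorem \ref{thm_mm}, diagonalise $R^L_x$ in a local orthonormal frame to compute $-w_d$ on the basis $\{\ov w^J\}$, identify $\mu_q(x)$ as the minimal eigenvalue on $(0,q)$-forms, and then use the monotonicity $\mu_j(x)\geq\mu_q(x)$ for $j\geq q$ together with compactness to conclude. The only cosmetic difference is that the paper invokes Proposition \ref{prop_refine_semip} and Remark \ref{rem_j_pos} for the step $\mu_{j+1}>\mu_j$, whereas you argue it directly from $\lambda_q(x)>0$; both are the same elementary observation about partial sums of ordered eigenvalues.
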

  
  \begin{proof}
  	As in (\ref{eq_comp_starq}), we choose a local orthonormal frame around $x\in X$ such that 
  	$R_x^L(\omega_i,\omega_j)=\delta_{ij} c_i(x)$ for $1\leq i,j\leq n$. Then
  	\be\label{eq_w_d}
  	w_d=-\sum_{j\geq 1} c_j(x)\ov w^j\wedge i_{\ov w_j}\in \End(\Lambda (T_x^{*0,1}X)).
  	\ee
  	 Let $C_J(x):=\sum_{j\in J}c_j(x)$ for each ordered $J=(j_1,\cdots,j_q)$ with $|J|= q$.
  	 For $\alpha\in \bigwedge^{0,q}_xX\setminus\{ 0\}$, we represent it by $\alpha=\sum _J\alpha_{J} \ov w^J$ with $|J|=q$. From (\ref{eq_loca_mu}) and (\ref{eq_w_d}), we have
  	 \be\label{eq_mu_CJ}
  	 \mu_q=\inf_{x\in X}\inf_{\alpha_J\in \C}\frac{\sum_J C_J(x)|\alpha_J|^2}{\sum_J|\alpha_J|^2} =\inf_{x\in X}\inf_{|J|=q}C_J(x).
  	 \ee
  	 
  	 For $s(x)\in \bigwedge^{0,q}_xX\otimes L_x^k\otimes E_x$, we can represent it by $s(x)=\sum _{J,i}s_{J,i}(x) \ov w^J\otimes e_i^k$ for a local orthonormal frame $\{ e^k_i \}$ of $L^k\otimes E$. Thus $|s(x)|^2_h=\sum_{J,i}|s_{J,i}(x)|^2$. By (\ref{eq_w_d}) and (\ref{eq_mu_CJ}), Theorem \ref{thm_mm} entails that, for any $s\in \Omega^{0,q}(X,L^k\otimes E)$,
  	 \be
  	 	\|D^{c,A}_k s\|^2&\geq& 2k\langle-w_d s,s\rangle-C\|s\|^2
  	 	=2k\int_X\sum_{J,i}C_J(x)|s_{J,i}|^2 dv_X-C\|s\|^2
  	 \ee
  	 
  	By (\ref{hypo_semi}), (\ref{D:q-semi-local}) and (\ref{eq_mu_def}), we have $\mu_q> 0$. 
  	By (\ref{eq_mu_CJ}), it follows that
  	\be
  	\|D^{c,A}_k s\|^2\geq 2k\mu_q\|s\|^2-C\|s\|^2
  	\ee
  	holds for $s\in \Omega^{0, q}(X,L^k\otimes E)$.
  	By Proposition \ref{prop_refine_semip} and Remark \ref{rem_j_pos}, we see $\mu_{j+1}> \mu_j>0$ for each $j\geq q$. Thus the assertion holds for $s\in \Omega^{0,\geq q}(X,L^k\otimes E)$. 
  \end{proof}
 
 \begin{rem}
 	From Remark \ref{rem_j_pos}, the positive assumption \cite[(1.5.21)]{MM} is equivalent to Nakano $1$-positive line bundle with respect to $\omega$. By (\ref{eq_mu_CJ}), $\mu_1=\inf_{x\in X,1\leq j\leq n}c_j(x)$. Thus
 	\cite[Theorem 1.5.7]{MM} follows from Theorem \ref{thm_q_mod} by choosing $q=1$. 
 \end{rem}
   
In general, for a real $3$-form $A$ on $X$, $(D_k^{c,A})^2$ may not preserve the
$\Z$-grading of $\Omega^{0,\bullet}(X,L^k\otimes E)$. As a special case, we can consider Kodaira Laplacian $\square^{L^k\otimes E}$, which preserves the $\Z$-grading. 
  	On a complex manifold $X$, Hodge-Dolbeault operator 
	satisfies $D_k:=\sqrt{2}(\ddbar^{L^k\otimes E}+\ddbar^{L^k\otimes E,*})=D^{c,A}_k$, 
	for $A=-\frac{1}{4}T_{as}$, see \cite[(1.4.17)]{MM}, 
	and the Kodaira Laplacian satisfies $\square^{L^k\otimes E}=\frac{1}{2}D_k^2$. 
   Then, from Hodge theorem, Serre duality and the equivalent definition of the $q$-positive line bundle (see Remark \ref{D:q-pos-M}), Theorem \ref{thm_q_mod} 
   leads to Andreotti-Grauert vanishing theorem \cite[Proposition 27]{AG:62} 
   (see also \cite[(5.1) Theorem]{Dem}):
  \begin{cor}[\cite{AG:62}]\label{cor_ag}
  	Let $X$ be a compact complex manifold of dimension $n$ and $(E,h^E)$ and $(L,h^L)$ be holomorphic Hermitian vector bundles on $X$ with $\rank(L)=1$. If $R^L$ has at least $p$ positive eigenvalues and at least $q$ negative eigenvalues at every $x\in X$, then, for $j\in \{ j\in \N: j\leq q-1 ~\mbox{or}~ j\geq n-p+1 \}$ and sufficiently large $k$, 
  $	H^j(X,L^k\otimes E)=0$. 
  \end{cor}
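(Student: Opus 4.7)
The plan is to reduce the corollary to Theorem \ref{thm_q_mod} applied twice: once directly in the high-degree range $j\geq n-p+1$, and once via Serre duality in the low-degree range $j\leq q-1$. The bridge that makes Theorem \ref{thm_q_mod} applicable is Remark \ref{D:q-pos-M}, which converts the metric-free eigenvalue hypothesis of Andreotti--Grauert (a Griffiths-type condition) into Nakano $q$-positivity with respect to a carefully chosen Hermitian metric.

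For the high-degree range, I first observe that the assumption ``$R^L$ has at least $p$ positive eigenvalues at every $x\in X$'' is precisely Griffiths $(n-p+1)$-positivity of $L$ in the sense of Definition \ref{def_qsemip}(B). Since $X$ is compact, Remark \ref{D:q-pos-M} (invoking the construction from \cite[(3.5.7)]{MM} or \cite[(9)]{M:92}) yields a Hermitian metric $\om$ on $X$ with respect to which $L$ is Nakano $(n-p+1)$-positive on all of $X$. Fix this $\om$, equip $L^k\otimes E$ with the induced metrics, and specialize Theorem \ref{thm_q_mod} to the Hodge--Dolbeault operator $D_k=\sqrt{2}(\ddbar^{L^k\otimes E}+\ddbar^{L^k\otimes E,*})=D_k^{c,A}$ with $A=-\tfrac14 T_{as}$; then $\square^{L^k\otimes E}=\tfrac12 D_k^2$ preserves the $\Z$-grading of $\Omega^{0,\bullet}(X,L^k\otimes E)$. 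Theorem \ref{thm_q_mod} produces constants $\mu_{n-p+1}>0$ and $C_L>0$ such that $\|D_k s\|^2\geq (2\mu_{n-p+1}k-C_L)\|s\|^2$ for every $s\in\Omega^{0,\geq n-p+1}(X,L^k\otimes E)$. For $k$ large this forces $\cH^{0,j}(X,L^k\otimes E)=0$ for every $j\geq n-p+1$, and Hodge theory on a compact complex manifold gives $H^j(X,L^k\otimes E)\cong \cH^{0,j}(X,L^k\otimes E)=0$ in this range.

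For the low-degree range I would apply Serre duality $H^j(X,L^k\otimes E)\cong H^{n-j}(X,L^{-k}\otimes E^*\otimes K_X)^*$ and then invoke the high-degree argument for the dual line bundle. Because $R^{L^*}=-R^L$, the hypothesis that $R^L$ has at least $q$ negative eigenvalues everywhere translates into $R^{L^*}$ having at least $q$ positive eigenvalues everywhere, so $L^*$ is Griffiths $(n-q+1)$-positive on $X$. The same procedure as above (with a possibly different metric $\om'$ adapted to $L^*$) yields $H^i(X,(L^*)^k\otimes E^*\otimes K_X)=0$ for every $i\geq n-q+1$ and $k$ large. Taking $i=n-j$, this kills $H^j(X,L^k\otimes E)$ for $j\leq q-1$.

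The only nontrivial point is the interplay between the metric-dependent Nakano notion used in Theorem \ref{thm_q_mod} and the metric-free eigenvalue-count hypothesis of Andreotti--Grauert; but that is entirely handled by Remark \ref{D:q-pos-M}, and the metrics chosen for $L$ and for $L^*$ may be different without affecting the final cohomological conclusion, which is intrinsic. The genuine analytic input, namely the uniform spectral gap for the twisted Dirac operator on sections of bidegree $\geq q$, is already isolated in Theorem \ref{thm_q_mod}.
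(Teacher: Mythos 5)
Your proposal is correct and follows exactly the route the paper indicates for this corollary: convert the eigenvalue hypothesis into Griffiths, hence (via Remark \ref{D:q-pos-M} and compactness) Nakano $(n-p+1)$-positivity for a suitable metric, apply Theorem \ref{thm_q_mod} to the Hodge--Dolbeault operator $D_k=D_k^{c,A}$ with $A=-\tfrac14 T_{as}$ to kill the harmonic spaces in degrees $\geq n-p+1$, and handle the range $j\leq q-1$ by Serre duality applied to $L^*$. The paper only sketches this in one sentence (``Hodge theorem, Serre duality and the equivalent definition of the $q$-positive line bundle''); your write-up supplies the same argument in full detail, including the correct index bookkeeping $n-j\geq n-q+1\iff j\leq q-1$.
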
 	
        	
By the same argument in \cite[Theorem 4.4,Corollary 4.5-4.6]{MM:02} and \cite[(6.1.15)]{MM}, Theorem \ref{thm_q_mod} still holds on $\Gamma$-covering manifolds as follows. Let $\til X$ be a $\Gamma$-covering manifold of dimension $n$. Let  $\til J$ be $\Gamma$-invariant almost complex structure on $\til X$. Let $g^{T\til X}$ be a $\Gamma$-invariant Riemannian metric compatible with $\til J$ and $\om:=g^{T\til X}(\til J\cdot,\cdot)$ be the real $(1,1)$-forms on $\til X$ induced by $g^{T\til X}$ and $\til J$. Let $(\til E,h^{\til E})$ and $(\til L,h^{\til L})$ be $\Gamma$-invariant holomorphic Hermitian vector bundles on $\til X$ with $\rank(\til L)=1$. Let $\nabla^{\til E}$ and $\nabla^{\til L}$ be Chern connections on $(\til E,h^{\til E})$ and $(\til L,h^{\til L})$ and let $R^{\til E}:=(\nabla^{\til E})^2$ and $R^{\til L}:=(\nabla^{\til L})^2$ be the curvatures. 
Let $\til D_k:=\sqrt{2}(\ddbar^{\til L^k\otimes \til E}+\ddbar^{\til L^k\otimes \til E,*})$ be the Hodge-Dolbeault operator defined on $\Dom(\til D_k)=\Dom(\ddbar^{\til L^k\otimes \til E})\cap \Dom(\ddbar^{\til L^k\otimes \til E, *})$ and $\square^{\til L^k\otimes \til E}:=\frac{1}{2}\til D_k^2$ the self-adjoint extension of Kodaira Laplacian.
\begin{thm}\label{thm_main_spectral_cover}
	Assume $1\leq q\leq n$ and $(\til L,h^{\til L})$ is Nakano $q$-positive with respect to $\om$ on $\til X$.  Then there exists $C_{\til L}>0$ such that for  any $k\in \N$ and any $\til s\in \Dom(\til D_k)\cap L^2_{0,\geq q}(\til X,\til{L}^k\otimes \til E)$, 
	\be 
	\|\til D_k\til s\|^2\geq (2\mu_qk-C_{\til L})\|\til s\|^2,
	\ee 
	where the constant $\mu_q>0$ defined in (\ref{eq_mu_def}).
\end{thm}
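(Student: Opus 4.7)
The plan is to adapt the proof of Theorem \ref{thm_q_mod} to the $L^2$ setting on $\til X$, using $\Gamma$-invariance as a substitute for the compactness of $X$ exploited there. Since $\til X/\Gamma$ is compact and $g^{T\til X}$ is $\Gamma$-invariant, the Riemannian manifold $(\til X, g^{T\til X})$ is complete, so $\til D_k$ is essentially self-adjoint on $\Omega^{0,\bullet}_0(\til X, \til L^k\otimes \til E)$, and that subspace is dense in $\Dom(\til D_k)$ for the graph norm of $\til D_k$. It therefore suffices to prove the inequality for $\til s \in \Omega^{0,\geq q}_0(\til X, \til L^k\otimes \til E)$ and then pass to the closure.

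For such $\til s$, I would apply the pointwise Lichnerowicz-type identity underlying Theorem \ref{thm_mm} (compare \cite[(1.5.30), (1.5.34)]{MM}), yielding
\[
\|\til D_k \til s\|^2 \geq 2k \langle -w_d \til s, \til s\rangle - C\|\til s\|^2,
\]
where $C$ collects the $k$-independent contributions from $R^{\til E}$, the torsion of $\om$, and the scalar curvature of $g^{T\til X}$. By $\Gamma$-invariance, all these tensors descend to the compact quotient $\til X/\Gamma$ and thus have uniformly bounded pointwise norms on $\til X$; this produces a single finite constant $C_{\til L}$ valid at every point. Combining with the Nakano $q$-positivity hypothesis and the local computation (\ref{eq_loca_mu}), which gives $\langle -w_d \til s, \til s\rangle \geq \mu_q|\til s|^2$ pointwise on $(0,q)$-forms and, via Proposition \ref{prop_refine_semip} together with Remark \ref{rem_j_pos}, on $(0,j)$-forms for every $j\geq q$, one obtains the desired estimate on $\Omega^{0,\geq q}_0(\til X, \til L^k \otimes \til E)$. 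A final density step, using completeness of $(\til X, g^{T\til X})$, propagates it to all of $\Dom(\til D_k)\cap L^2_{0,\geq q}(\til X, \til L^k \otimes \til E)$.

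The main obstacle, absent in the compact case, is making every constant uniform over the noncompact manifold $\til X$. This is handled in both places by $\Gamma$-invariance: the zeroth order curvature terms entering Lichnerowicz are $\Gamma$-invariant and hence bounded on $\til X$ as pullbacks of continuous sections over the compact quotient $\til X/\Gamma$, and the completeness of $g^{T\til X}$, itself inherited from compactness of $\til X/\Gamma$, is precisely what licenses the density step extending the inequality from $\Omega^{0,\bullet}_0(\til X, \til L^k\otimes \til E)$ to the full domain of $\til D_k$. Once these two uniformity points are in place, the rest of the argument is a direct transcription of the proof of Theorem \ref{thm_q_mod}.
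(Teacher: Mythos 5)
Your proposal is correct and is essentially the argument the paper intends: the paper simply defers to the covering-manifold Lichnerowicz estimates of \cite[Theorem 4.4, Corollaries 4.5--4.6]{MM:02} and \cite[(6.1.15)]{MM}, which consist precisely of the two uniformity points you isolate — boundedness of the zeroth-order curvature terms (and positivity of $\mu_q$) via $\Gamma$-invariance and compactness of $\til X/\Gamma$, plus the graph-norm density of $\Omega^{0,\bullet}_0(\til X,\til L^k\otimes\til E)$ in $\Dom(\til D_k)$ afforded by completeness. No gap; your reconstruction matches the cited route.
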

For the $L^2$ Andreotti-Grauert theorem on covering manifolds, see \cite[Theorem 3.5]{Bra:99} and \cite[Section 4]{MM:02}. 
\subsection{Semipositive line bundles of type $q$}
Let $X$ be a complex manifold of dimension $n$ and $(L,h^L)$ be a holomorphic Hermitian line bundle. 
For $1\leq q\leq n$, we have the notion of semipositive line bundles of type $q$ as follows,  refer to \cite[Chapter 3, Section 1, Definition 1.1]{Oh:82}. 
	We say $(L,h^L)$ is semipositive of type $q$ if $(L,h^L)\geq 0$ everywhere and $\sqrt{-1}R_x^{(L,h^L)}$ is positive on a $(n-q+1)$-dimensional subspace of $T_x^{(1,0)}X$ at every $x\in X$,

 We remark that, by \cite[Chapter 3, Section 2, Proposition 2.1 (1),(2)] {Oh:82} and Definition \ref{def_qsemip}, if $(L,h^L)$ is semipositive of type $q$ on a complex manifold $X$, then $(L,h^L)$ is Nakano $q$-positive at every point $x\in X$ with respect to arbitrary Hermitian metric $\omega$ on $X$. As a consequence, by replacing the hypothesis Nakano $q$-positive with respect to $\omega$ by semipositive of type $q$ 
 in Theorem \ref{thm_q_mod} and \ref{thm_main_spectral_cover}, the conclusion therein still hold.
 
Besides, by adapting the notion of semipositive of type $q$ to Theorem \ref{T:qvsh}, we obtain another generalization of \cite[Theorem 3.5.9]{MM} as follows. 
\begin{cor}
	Let $(X,\omega)$ be a $q$-convex manifold of dimension $n$. Let $E, L$ be holomorphic vector bundle with $\rank(L)=1$. 
	Let $K\subset X$ be the exceptional set and $1\leq p\leq n$. If $(L,h^L)$ is semipositive of type $p$ on $X_c$ with $K\subset X_c$, then for $j\geq \max\{p,q\}$ and $k$ sufficiently large,
	\begin{equation}  
		H^j(X,L^k\otimes E)=0.
	\end{equation}
\end{cor}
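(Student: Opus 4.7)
The plan is to reduce this corollary to Theorem \ref{T:qvsh} by observing that the hypothesis ``semipositive of type $p$'' is an \emph{absolute} condition on $(L,h^L)$ (independent of any Hermitian metric on $X$), while Nakano $p$-positivity with respect to a given $\omega$ is a \emph{relative} condition. The bridge between the two is the implication recorded in the paragraph preceding the statement: by \cite[Chapter 3, Section 2, Proposition 2.1 (1),(2)]{Oh:82}, whenever $(L,h^L)$ is semipositive of type $p$ at a point $x$, it is automatically Nakano $p$-positive at $x$ with respect to every Hermitian metric on $X$. So the hypothesis is strictly stronger than what Theorem \ref{T:qvsh} requires.

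The first concrete step is to replace whatever $\omega$ was initially given on the $q$-convex manifold $X$ by a Hermitian metric $\omega$ supplied by Lemma \ref{lowbd_rho_lem}; this is the metric that makes the technical lower bound for $\dbar\ddbar\varrho$ on the collar $X_v\setminus \overline{X}_{u_0}$ available, and it is the metric Theorem \ref{T:qvsh} is phrased with respect to. Since the ``semipositive of type $p$'' condition on $(L,h^L)$ is a pointwise statement about the signature of $\sqrt{-1}R^{(L,h^L)}_x$ on subspaces of $T_x^{(1,0)}X$, it is preserved under this change of $\omega$. In particular, $(L,h^L)$ remains semipositive of type $p$ on the sublevel set $X_c$ with $K\subset X_c$.

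Next I would apply the Ohsawa implication quoted above pointwise on $X_c$ to conclude that $(L,h^L)$ is Nakano $p$-positive on $X_c$ with respect to this new $\omega$. With this in hand all the hypotheses of Theorem \ref{T:qvsh} are satisfied (the $q$-convexity of $X$, the compatibility of $\omega$ with Lemma \ref{lowbd_rho_lem}, $K\subset X_c$, and Nakano $p$-positivity on $X_c$), so applying that theorem directly yields $H^j(X,L^k\otimes E)=0$ for $j\geq \max\{p,q\}$ and $k$ sufficiently large.

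There is essentially no hard step: the entire content is the observation that semipositive of type $p$ is metric-free and implies Nakano $p$-positivity against any $\omega$, which lets us freely swap in the ``good'' metric demanded by the $q$-convex framework. If any subtlety arises, it is only in making sure that the chosen $\omega$ from Lemma \ref{lowbd_rho_lem} does not disturb the sublevel structure of $\varrho$ used to describe $X_c$ — but the exhaustion function $\varrho$ and the metric $\omega$ are decoupled in Lemma \ref{lowbd_rho_lem}, so this causes no issue.
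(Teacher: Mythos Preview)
Your proposal is correct and follows essentially the same approach as the paper: the paper's proof simply notes that semipositive of type $p$ implies Nakano $p$-positive with respect to the metric $\omega$ from Lemma~\ref{lowbd_rho_lem}, and then invokes Theorem~\ref{T:qvsh}. Your write-up is more explicit about why the metric swap is harmless (the metric-independence of the type-$p$ condition), but the logical content is identical.
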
	 
\begin{proof} 
	Since $(L,h^L)$ is semipositive of type $p$  on $X_c$, $(L,h^L)$ is Nakano $q$-positive with respect to $\omega$ given by Lemma \ref{lowbd_rho_lem} on $X_c$. Finally, we use Theorem \ref{T:qvsh}.
\end{proof}	

\begin{cor}
	Let $M$ be a smooth pseudoconvex domain in a K\"{a}hler manifold $(X,\omega)$ of dimension $n$ and $1\leq q\leq n$. Let $(L,h^L)$ be a semipositive line bundle on $M$. Assume $(L,h^L)$ is semipositive of type $q$ on a neighborhood of $bM$. Then for every $j\geq q$,
	\be
	H^{0,j}_{(2)}(M,K_X\otimes L)=0.
	\ee
\end{cor}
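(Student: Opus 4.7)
The plan is to reduce this corollary to Proposition \ref{P:pseudo_vanish} by converting the hypotheses on $(L,h^L)$ into the Nakano-type positivity hypotheses required there.

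First, on all of $M$ we have $(L,h^L)\geq 0$. By Proposition \ref{P:semipos}, semipositivity of $(L,h^L)$ at a point implies, with respect to the \emph{given} K\"ahler metric $\omega$ and for every $1\leq q\leq n$, the inequality
\[
\langle [\sqrt{-1}R^{(L,h^L)},\Lambda]\alpha,\alpha\rangle_h\geq 0 \quad\text{for all } \alpha\in\bigwedge^{n,q}T_x^*X.
\]
Hence $(L,h^L)$ is Nakano $q$-semipositive with respect to $\omega$ on $M$ for our fixed $q$.

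Next, on a neighborhood $U$ of $bM$ we have that $(L,h^L)$ is semipositive of type $q$. As recalled right after the definition of that notion (invoking \cite[Chapter 3, Section 2, Proposition 2.1]{Oh:82}), semipositivity of type $q$ implies Nakano $q$-positivity at every point with respect to an \emph{arbitrary} Hermitian metric; in particular, it holds with respect to our $\omega$ on $U$. Thus
\[
\langle [\sqrt{-1}R^{(L,h^L)},\Lambda]\alpha,\alpha\rangle_h > 0 \quad\text{for all } \alpha\in\bigwedge^{n,q}T_x^*X\setminus\{0\},\ x\in U,
\]
so $(L,h^L)$ is Nakano $q$-positive with respect to $\omega$ on a neighborhood of $bM$.

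With both hypotheses of Proposition \ref{P:pseudo_vanish} verified for the K\"ahler metric $\omega$, that proposition yields $H^{0,j}_{(2)}(M,K_X\otimes L)=0$ for every $j\geq q$, which is the conclusion. There is essentially no obstacle here: the whole content is the two implications (pointwise semipositivity $\Rightarrow$ Nakano $q$-semipositivity with respect to $\omega$, and semipositivity of type $q$ $\Rightarrow$ Nakano $q$-positivity with respect to $\omega$), both of which are already available in the paper.
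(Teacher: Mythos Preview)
Your proposal is correct and follows essentially the same approach as the paper's own proof: use Proposition~\ref{P:semipos} to get Nakano $q$-semipositivity on $M$, use the remark (via \cite{Oh:82}) that semipositive of type $q$ implies Nakano $q$-positive with respect to any Hermitian metric on a neighborhood of $bM$, and then apply Proposition~\ref{P:pseudo_vanish}.
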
 
\begin{proof}
Proposition \ref{P:semipos} and Proposition \ref{P:pseudo_vanish} and the fact that $(L,h^L)$ is Nakano $q$-positive with respect to any Hermitian metric $\omega$ on a neighborhood of $bM$.
\end{proof} 
 
\section*{Acknowledgements} 
I thank Prof.\ Chin-Yu Hsiao and Prof.\ Rung-Tzung Huang for
support and encouragement. This work is partially supported by MOST 108-2811-M-008-500. 

\end{document}